\theoremstyle{plain}
\newtheorem{theorem}{Theorem}[section]
\newtheorem{lemma}[theorem]{Lemma}
\newtheorem{corol}[theorem]{Corollary}
\newtheorem{prop}[theorem]{Proposition}
\newtheorem{definition}[theorem]{Definition}
\theoremstyle{remark}
\newtheorem{example}[theorem]{Example}
\newtheorem{remark}{Remark}
\newtheorem*{theoremA}{Theorem A}
\newtheorem*{theoremB}{Theorem B}
\newtheorem*{theoremC1}{Theorem C1}
\newtheorem*{theoremC2}{Theorem C2}
\newtheorem*{theoremC3}{Theorem C3}
\newtheorem*{theoremC4}{Theorem C4}
\newtheorem*{theoremD}{Theorem D}
\newtheorem*{lemmaA1}{Lemma A1}
\numberwithin{equation}{section}
\newcommand{\mc}[3]{\multicolumn{#1}{#2}{#3}}
\newlength{\mkom}
\newcommand{\h}{\hline}
\newcommand{\s}[1]{{\footnotesize #1}}
\newcommand{\mcf}{\multicolumn{1}{c}{}}
\newcommand{\Inc}[0]{\parallel}%
\newcommand{\Z}{{\ensuremath{\mathbb{Z}}}}
\newcommand{\R}{{\ensuremath{\mathbb{R}}}}
\newcommand{\C}{{\ensuremath{\mathbb{C}}}}
\newcommand{\F}{{\ensuremath{\mathbb{F}}}}
\newcommand{\G}{{\ensuremath{\mathbb{G}}}}
\newcommand{\p}{\mathscr{P}}
\newcommand{\q}{\mathscr{Q}}
\DeclareMathOperator{\Imm}{Im}%
\DeclareMathOperator{\Real}{Re}%
\DeclareMathOperator{\corep}{corep\,}%
\DeclareMathOperator{\supp}{Supp\,}%
\DeclareMathOperator{\Max}{max\,}%
\DeclareMathOperator{\Min}{min\,}%
\DeclareMathOperator{\ind}{Ind\,}%
\DeclareMathOperator{\Int}{Int\,}%
\DeclareMathOperator{\codim}{codim\,}%
\DeclareMathOperator{\dimc}{\underline{dim}\,}%
\DeclareMathOperator{\car}{Char\,}%
\newcommand{\seven}{\widehat{\text{VII}}}
\newcommand{\Mtres}{\begin{picture}(170,50) \put(1,40){$A_{25}$} 
\multiput(10.35,2)(30,0){2}{$\otimes$}
\put(42,22){$\circ$}
\put(45,8.8){\line(0,1){13.9}}
\put(20,3){{\footnotesize $a$}}
\put(50,3){{\footnotesize $b$}}
\put(50,23){{\footnotesize $\eta$}}
\put(101,40){$A_{25}^*$}
\multiput(120.35,22)(30,0){2}{$\otimes$}
\put(152,2){$\circ$}
\put(155,21.2){\line(0,-1){13.9}}
\put(130,23){{\footnotesize $a$}}
\put(160,23){{\footnotesize $b$}}
\put(160,3){{\footnotesize $\eta$}}  \end{picture}}
\newcommand{\Mocho}{\begin{picture}(170,50)  \put(1,40){$A_{38}$} 
\multiput(10.35,2)(30,0){2}{$\otimes$}
\put(25.35,22){$\otimes$}
\put(17.4,8.2){\line(3,4){10.2}}
\put(42.6,8.2){\line(-3,4){10.2}}
\put(35,23){{\footnotesize $q$}}
\put(20,3){{\footnotesize $a$}}
\put(50,3){{\footnotesize $b$}}
\put(111,40){$A_{38}^*$}
\multiput(120.35,22)(30,0){2}{$\otimes$}
\put(135.35,2){$\otimes$}
\put(127.4,21.8){\line(3,-4){10.2}}
\put(152.6,21.8){\line(-3,-4){10.2}}
\put(145,3){{\footnotesize $q$}}
\put(130,23){{\footnotesize $a$}}
\put(160,23){{\footnotesize $b$}}  \end{picture}}
\newcommand{\Mnueve}{\begin{picture}(180,50)  \put(1,40){$A_{39}$} 
\multiput(.35,2)(30,0){2}{$\otimes$}
\put(15.35,22){$\otimes$}
\put(47,22){$\circ$}
\put(7.4,8.2){\line(3,4){10.2}}
\put(32.6,8.2){\line(-3,4){10.2}}
\put(37.4,8.2){\line(3,4){11.2}}
\put(25,23){{\footnotesize $q$}}
\put(10,3){{\footnotesize $a$}}
\put(40,3){{\footnotesize $b$}}
\put(55,23){{\footnotesize $\theta$}}
\put(111,40){$A_{39}^*$}
\multiput(120.35,22)(30,0){2}{$\otimes$}
\put(135.35,2){$\otimes$}
\put(167,2){$\circ$}
\put(127.4,21.8){\line(3,-4){10.2}}
\put(152.6,21.8){\line(-3,-4){10.2}}
\put(157.4,21.8){\line(3,-4){11.2}}
\put(145,3){{\footnotesize $q$}}
\put(130,23){{\footnotesize $a$}}
\put(160,23){{\footnotesize $b$}}
\put(175,3){{\footnotesize $\theta$}}  \end{picture}}
\newcommand{\Ltres}{\begin{picture}(170,50)  \put(1,40){$A_{26}$} 
\multiput(2,2)(20,0){2}{$\circ$}
\put(40.35,2){$\otimes$}
\put(42,22){$\circ$}
\put(45,8.8){\line(0,1){13.9}}
\put(10,3){{\footnotesize $\varrho$}}
\put(30,3){{\footnotesize $\sigma$}}
\put(50,3){{\footnotesize $a$}}
\put(50,23){{\footnotesize $\eta$}}
\put(81,40){$A_{26}^*$}
\multiput(92,22)(20,0){2}{$\circ$}
\put(130.35,22){$\otimes$}
\put(132,2){$\circ$}
\put(135,21.2){\line(0,-1){13.9}}
\put(100,23){{\footnotesize $\varrho$}}
\put(120,23){{\footnotesize $\sigma$}}
\put(140,23){{\footnotesize $a$}}
\put(140,3){{\footnotesize $\eta$}}  \end{picture}}
\newcommand{\Lcuatro}{\begin{picture}(170,50)  \put(1,40){$A_{27}$} 
\multiput(2,2)(30,0){2}{$\circ$}
\put(17,22){$\circ$}
\put(60.35,2){$\otimes$}
\put(6.41,6.88){\line(3,4){12.3}}
\put(33.59,6.88){\line(-3,4){12.3}}
\put(25,23){{\footnotesize $\zeta$}}
\put(70,3){{\footnotesize $a$}}
\put(40,3){{\footnotesize $\sigma$}}
\put(10,3){{\footnotesize $\varrho$}}
\put(81,40){$A_{27}^*$}
\multiput(92,22)(30,0){2}{$\circ$}
\put(107,2){$\circ$}
\put(150.35,22){$\otimes$}
\put(96.41,23.12){\line(3,-4){12.3}}
\put(123.59,23.12){\line(-3,-4){12.3}}
\put(115,3){{\footnotesize $\zeta$}}
\put(160,23){{\footnotesize $a$}}
\put(130,23){{\footnotesize $\sigma$}}
\put(100,23){{\footnotesize $\varrho$}}
\end{picture}}
\newcommand{\Lonce}{\begin{picture}(220,50)  \put(1,40){$A_{40}$}    
\multiput(2,2)(30,0){2}{$\circ$}
\put(60.35,2){$\otimes$}
\put(45.35,22){$\otimes$}
\put(36.41,6.88){\line(3,4){11.2}}
\put(62.6,8.2){\line(-3,4){10.2}}
\put(75,3){{\footnotesize $a$}}
\put(55,23){{\footnotesize $q$}}
\put(40,3){{\footnotesize $\sigma$}}
\put(10,3){{\footnotesize $\varrho$}}
\put(131,40){$A_{40}^*$}
\multiput(142,22)(30,0){2}{$\circ$}
\put(200.35,22){$\otimes$}
\put(185.35,2){$\otimes$}
\put(176.41,23.12){\line(3,-4){11.2}}
\put(202.6,21.8){\line(-3,-4){10.2}}
\put(215,23){{\footnotesize $a$}}
\put(195,3){{\footnotesize $q$}}
\put(180,23){{\footnotesize $\sigma$}}
\put(150,23){{\footnotesize $\varrho$}}
\end{picture}}
\newcommand{\US}[1]{\makebox[1.2\width]{\mbox{$#1$}\makebox[0pt]{\hspace{-3mm}\raisebox{1.6ex}{{\scriptsize
$\thicksim$}}}}}%
\newcommand{\UI}[1]{\makebox[1.2\width]{\mbox{$#1$}\makebox[0pt]{\hspace{-4mm}\raisebox{-0.8ex}{{\scriptsize
$\thicksim$}}}}}%
\newcommand{\bsm}{\begin{smallmatrix}}
\newcommand{\esm}{\end{smallmatrix}}
\newcommand{\bM}{\begin{matrix}}
\newcommand{\eM}{\end{matrix}}
\newcommand{\bt}{\begin{tabular}}
\newcommand{\et}{\end{tabular}}
\newcommand{\bs}{\begin{subarray}}
\newcommand{\es}{\end{subarray}}
\title{One-parameter 2-equipped posets and classification of their corepresentations}
\author{Claudio Rodríguez Beltrán\thanks{2010 Mathematical Subject Classification 16G20; 16G60; 06A11. The author thanks the financial support received from project CONACYT-México 81948. Present address: Universidad Nacional de Colombia, Bogotá; \emph{e-mail address: crodriguezbe@unal.edu.co}} \\ Universidad Nacional de Colombia}
\date{\today}
\begin{document}

\maketitle
\begin{abstract}
In this paper a one-parameter criterion for 2-equipped posets with respect to corepresentations is stated and proved. The list of sincere one-parameter 2-equipped posets is given as well as a complete matrix classification of all their indecomposable corepresentations. Relations between dimensions of indecomposable corepresentations and roots of the Tits quadratic form are also established.
\end{abstract}

\section{Introduction}\label{introduction}

Representations of posets (partially ordered sets) and quivers arose in a natural way from the study of important problems of finite dimensional algebras, see \cite{ARS, Gab-Roi-92, Ri84, Roi-68, Sim92}.

In the early 1970s, Nazarova and Roiter introduced representations of posets in matrix language \cite{Naz-Roi-72}. Later Gabriel introduced representation of quivers \cite{g72},  studied further representations of some special posets and gave an invariant interpretation of representations of posets in terms of linear spaces \cite{g73}. One of the main considered problems in the representation theory of posets is to classify indecomposable representations up to isomorphism.

In order to solve these problems, matrix and combinatical techniques were developed, namely the algorithms of differentiation with respect to a maximal point \cite{Naz-Roi-72} and to a suitable pair of points \cite{Zav-77}. The first one was used to obtain the finite type criterion \cite{Kleiner-72a} and a description of their representations \cite{Kleiner-72b}, while the second was used to proof the criterion of finite growth type \cite{Zav-Naz-82}. Another important classification results are the tame and the one-parameter type criterion, obtained in \cite{Naz-75, Otr-76}, respectively.

After this exhaustive development, the researches focused on representations of posets with additional structures such as posets with involution \cite{Naz-Roi-83}, with an equivalent relation \cite{BZ-91, Naz-Roi-83, Naz-Roi-02, Zav-91}, dyadic posets \cite{Naz-Roi-00, Roi-Bel-Naz-96}, valued posets \cite{Kle-Sim-90} as well as 2-equipped posets (formerly called equipped posets) introduced in \cite{Zab-Zav-99}. In particular, for representations of 2-equipped posets it holds an interpretation as a schurian subcategory of a vector space category, see \cite{Sim92}.

In the last decade, many classification results about representation of 2-equipped posets were obtained such as the criterion of one-parameter, tame, and finite growth type, see \cite{Zab-Zav-99, Zav-03-TEP, Zav-05-EPFG}, respectively. Some of these classification problems can be reduced to a matrix problems of mixed type over the pair of fields of real and complex numbers $(\R , \C )$. Precisely, in this context arise another matrix problem for 2-equipped posets which behaves in some sense dual to the representation one, and leads to the study of their corepresentations, introduced in \cite{Rod-Zav-07}.

Then, it is important to develop a systematic theory of corepresentations and reduction algorithms of 2-equipped posets. Considering the results in \cite{Kle-Sim-90}, it must be considered the 2-equipped posets of infinite type, at first, those of one-parameter type. As the representation case has shown, the classification problem of indecomposable corepresentations of one-parameter type is basic and quite deep among tame type problems. According to the logical development of the theory of representations is necessary and indispensable to obtain a criterion for deciding when a 2-equipped poset is of one-parameter with respect to its corepresentations and then, obtain a complete classification of their indecomposable corepresentations (even the corresponding series) besides the list of sincere posets.

In order to reach these objectives, in the present paper, it is used both linear algebraic and combinatorial methods of the classic theory of representations of posets as well as new techniques, in particular, the algorithm of differentiation $\seven$ for matrix problems of mixed type over an arbitrary quadratic field extension $\F\subset \G$, developed in \cite{Rod-Zav-07}, and special reduction methods together with structural properties of corepresentations of some critical posets, obtained in \cite{Rod-10}. The interested reader may consult also the Ph.D. thesis \cite{Rod-12} in which is gathered the most important information concerning to this investigation.

The reader who does not know about corepresentations of 2-equipped posets can get familiar with them in Section \ref{definitions and notations}, which is a short introduction to the basic facts and it also recalls some definitions and notations. Finally, the list of 2-equipped posets of finite type and their classification is given in appendix \ref{finitetypeposets}.

The main goals of this research (formulated in Section \ref{main theorems}) are to obtain the criteria of one-parameter type for 2-equipped posets with respect to corepresentations (Theorem A), the list of sincere one-parameter 2-equipped posets (Theorem B) along with their respective diagrams which are presented in appendixes \ref{sincereK6posets}, \ref{sincereK8posets} and \ref{specialposets}, the classification of indecomposable corepresentations of the critical one-parameter 2-equipped posets (Theorems C1 - C4)\footnote{Theorems C1 and C3 (in the present paper) have already been obtained in \cite{Rod-10}, in order to establish some structural properties about 2-equipped posets containing the critical ones $K_6$ and $K_8$.}, and Theorem D which establishes a relationship between the Tits quadratic form associated to a 2-equipped poset for corepresentations and the dimension vectors of the indecomposable corepresentations of one-parameter 2-equipped posets.

An explicit description of the indecomposable corepresentations of each sincere one-parameter 2-equipped poset, even the critical ones, is given in Sections \ref{sectionK6} - \ref{sectionK9}, while proof of Theorems A, B and D are left in Section \ref{proofs}. Finally, the minimal dimension vectors corresponding to indecomposable corepresentations of sincere one-parameter 2-equipped posets are listed in appendixes \ref{dimensions} and \ref{dimensions of K7 and K9}.

\section{Basic Definitions and Notations}\label{definitions and notations}

\subsection{2-equipped posets}

A \textbf{2-equipped poset} is a triple $( \p ,\ \le, \ \lhd )$ where $(\p,\ \le )$ is an ordinary poset, and $\lhd$ is an additional binary relation on $\p$ contained in $\le$. The \textbf{strong} relation $\lhd$ also satisfies the following condition:
\begin{equation}\label{New def. of equipped poset}
    x\leq y\lhd z\quad or\quad x\lhd y\leq z\quad implies\quad x\lhd z.
\end{equation}

Whether $x\le y$ and $x \ntriangleleft y$, we write $x\prec y$ and relation $\prec$ is called \textbf{weak}. A point $x\in \p$ is called \textbf{strong} (\textbf{weak}) if $x\lhd x$ ($x\prec x$). Notice that $\le$ is a disjoint union of $\lhd$ and $\prec$, and both relations need not be a partially ordered relations on $\p$.

A 2-equipped poset $( \q ,\ \le_{\q}, \ \lhd_{\q} )$ is a \textbf{subposet} of $( \p ,\ \le_{\p}, \ \lhd_{\p} )$ if $\q\subset\p$ is a full subset, it means $\; x\le_{\q} y \ \Longleftrightarrow \ x\le_{\p} y\; $ and $\; x\lhd_{\q} y \ \Longleftrightarrow \ x\lhd_{\p} y\; $ for all $x,y\in\q$. From now on, all considered posets are finite, and for the brevity, we will write simply $\p$ instead of $(\p,\ \le, \ \lhd )$. We write $x \Inc y$ if the points are incomparable. For any subset $X\subset \p $ denote
\begin{equation}\label{incomparable}
N(X) = \{ a\in \p : a\Inc x \text{ for all }x \in X \}.
\end{equation}

Given two subposets $X, Y\subset \p$, we denote by $X + Y$ their \textbf{cardinal sum} or \textbf{sum} (possibly with elements comparable between them), if it is further required that $x \le y$ for all $x\in X$ and $y\in Y$ it means their \textbf{ordinal sum} and is denoted by  $X\le Y$; The \textbf{strong} and \textbf{weak ordinal sums} $X\lhd Y$ and $X\prec Y$ are analogous and respectively defined. Let $|X|$ be the cardinality of a set $X$. Unless there confusion, we simply write a singleton set $\{ x \}$ as $x$.

For a subposet $X\subset \p$, set $\; X^{\vee}= \{ y\in\p \ : \ x\le y \; \text{for some} \; x\in X \}$; $X^{\triangledown}, \ X^{\curlyvee}$ are analogously defined, and $X_{\wedge}, \ X_{\triangle}, \ X_{\curlywedge}$ are dually defined, finally $X^{\blacktriangledown} = X^{\vee}\setminus X$, $X_{\blacktriangle} = X_{\wedge}\setminus X$. We say that some relation $xRy$ between two points is \textbf{strict} if $x\not= y$. The set of all maximal (minimal) points in $X\subset \p$ is denoted by $\Max X$ ($\Min X$).

A subposet of $\p$ is a \textbf{chain} (\textbf{anti-chain}) if all its points are pairwise comparable (incomparable), in particular, a \textbf{dyad} (triad) is an anti-chain of two (three) points, a \textbf{garland} is an ordinal sum of singleton sets and dyads. The \textbf{length} of a chain is the number of its points. A chain of the form $a_1 \prec a_2 \prec \cdots \prec a_n$ is called \textbf{weak}, if additionally $a_1 \prec a_n$ then it is \textbf{completely weak}. An arbitrary subset $X\subset\p$ is said to be \textbf{ordinary} (\textbf{completely weak}) if all its points and possible relations between them are strong (weak), in particular, if $\p$ just has strong points is called ordinary and its equipment is trivial.

The \textbf{weight} $w(x)$ of a point $x\in\p$ is $1$ if $x$ is a strong point and $2$ if it is a weak point. Whether $X\subset\p$ is an anti-chain, its \textbf{weight} is defined as the sum of the weights of each of its elements and it is denoted by $w(X) = \sum_{x\in X} w(x)$. The \textbf{weight} of $\p$ is $w(\p) = \max\{ w(X)\: : \: \text{for all anti-chains } X \subset \p \}$.

\begin{figure}[h]
\centering
\begin{picture}(140,100)
\put(2,2){$\circ$}
\put(6.5,6.5){\line(1,1){25.7}}
\multiput(30.35,32)(30,30){2}{$\otimes$}
\multiput(60.35,2)(30,30){2}{$\otimes$}
\put(90.35,92){$\otimes$}
\put(122,62){$\circ$}
\put(97.8,37.8){\line(1,1){25.6}}
\multiput(37.8,37.8)(30,30){2}{\line(1,1){24.4}}
\qbezier(35,39)(50,80)(91,95)
\put(67.3,8.3){\line(1,1){24.4}}
\put(68.3,7.3){\line(1,1){24.4}}
\put(37.8,32.2){\line(1,-1){24.4}}
\put(12,2){{\footnotesize $\alpha$}}
\put(132,62){{\footnotesize $\beta$}}
\put(72,2){{\footnotesize $a$}}
\put(42,32){{\footnotesize $b$}}
\put(72,62){{\footnotesize $c$}}
\put(102,92){{\footnotesize $d$}}
\put(102,32){{\footnotesize $e$}}
\end{picture}
\caption{Diagram of a 2-equipped poset.}\label{equipped poset}
\end{figure}
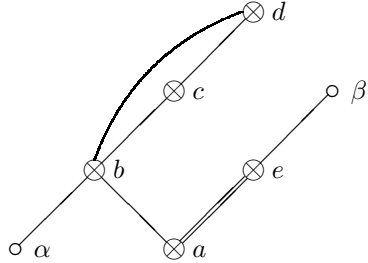

Graphically each 2-equipped poset is presented by its \textbf{diagram}, which is obtained from the ordinary Hasse diagram of the poset $\p$ by distinguishing its strong and weak points by $\circ$ and $\otimes$ respectively, and by joining additional lines symbolizing those strong strict relations between weak points which are not consequences of other relations. It follows from (\ref{New def. of equipped poset}) that relations which rise and sink from strong points are strong, so these are not distinguished in diagrams.

\begin{example}
Let $\p$ be an equipped poset given by the diagram of Figure \ref{equipped poset}. Then, among strict relations, the only weak ones are $a\prec b \prec c \prec d, \:\:\: a\prec c $, hence all those in the rest are strong, namely,  $ b \lhd d, \:\:\: \alpha \lhd \{b,c,d\}, \:\:\: a \lhd \{ d, e, \beta \}, \:\:\: e\lhd \beta$. On the other hand, $\p = \alpha^{\vee} + \beta_{\wedge}$ is the sum of two chains, and $w(\p) = 4$.
\end{example}

\subsection{The category $\corep \p$ of corepresentations}

Unless otherwise stated, throughout this paper, $\F\subset \F (\xi) = \G$ denotes an arbitrary quadratic field extension with primitive element $\xi$. Since each $x\in \G$ is uniquely expressed as $\F$-linear combination $x = a + \xi b$, its \textbf{real} and \textbf{imaginary part} $\Real (x) = a$ and $\Imm (x) = b$ are well defined. Given a 2-equipped poset $\p$, a \textbf{matrix corepresentation} $M$ is a rectangular matrix over $\G$ separated into matrix blocks $M_x$ ($x\in\p$) horizontally arranged, $M = \left[ M_{x_1} | \cdots | M_{x_n} \right]_{(x_i\in\p)}$, along with a set of \textbf{admissible transformations}:
\begin{enumerate}
\item [$(a)$]$\G$-elementary row transformations of the whole matrix $M$;
\item [$(b)$]$\G$-elementary ($\F$-elementary) column transformations of a stripe $M_x$ if the point $x$ is strong (weak);
\item [$(c)$]In the case of a strong (weak) relation $x\lhd y$ ($x\prec y$), additions of columns of the stripe $M_x$ to the columns of the stripe $M_y$ with coefficients in $\G$ ($\F$).
\end{enumerate}

Let $M, \ N$ be a matrix corepresentations of $\p$, its \textbf{direct sum} is defined in a natural way as $M\oplus N = [M_x\oplus N_x]_{(x\in\p)}$. $M$ is said to be \textbf{equivalent} or \textbf{isomorphic} to $N$ if they can be turned into each other by means of admissible transformations, and it is denoted by $M\sim N$. An \textbf{indecomposable} matrix corepresentations is defined in the usual matrix sense. The set of all corepresentations of $\p$ over ($\F, \G$) together with admissible transformations lead to the \textbf{matrix coproblem} of classify all indecomposable corepresentations up to equivalence.

To every matrix corepresentation $M$ is associated its \textbf{dimension} $d = \dimc M = (d_0, d_x\: : \: x\in\p )$ where $d_0$ ($d_x$) is the number of rows in $M$ (of columns in $M_x$).

There exists another definition of corepresentations of 2-equipped posets, in invariant language, compatible with the matrix definition above. We recall this and another facts from \cite{Rod-10, Rod-Zav-07}. Given a $\F$-subspace $U$ of some linear $\G$-space $V$, its $\G$-\textbf{hull} $\US{U}$ is the least $\G$-subspace containing $U$ and its $\G$-\textbf{cohull} $\UI{U}$ is the largest $\G$-space contained in $U$, then they can be defined as follows \begin{equation} \US{U} = \sum_{x\in U} \G x = \bigcap_{U\subset W}W \hspace{0.5cm} \text{and} \hspace{0.5cm} \UI{U} = \{ x\in U\: : \: \G x\subset U\} = \sum_{W\subset U} W \end{equation}where $W$ are $\G$-subspaces of $V$. When $U$ itself is a $\G$-subspace then $\US{U}=\UI{U}=U$ and $U$ is said to be $\G$-\textbf{strong} or \textbf{strong}.

A \textbf{corepresentation} of a 2-equipped poset $\p$ over the pair $(\F ,\G )$ is a collection of the form $U=(U_0,U_x\::\:x\in\p\, )$ where $U_0$ is a finite-dimensional $\G $-space containing $\F $-subspaces $U_x$ such that
\begin{equation}\label{Cond. for Corep.}
\begin{aligned}
x\leq y & \Longrightarrow U_x\subset U_y\,,  \\
x\lhd y & \Longrightarrow  \US{U}_x\subset U_y\,.
\end{aligned}
\end{equation}
Then, to each strong point corresponds a $\G$-strong subspace.

The \textbf{dimension} of a corepresentation $U$ of $\p$ is a vector $d=\dimc U=(d_0, d_x\::\:x\in\p)$ with $d_0=\dim_\G U_0 \;$ and $\; d_x=\dim_\G U_x/\underline{U_x}\; $ if $x$ is strong or $\; d_x=\dim_\F U_x/\underline{U_x}\; $ if $x$ is weak, where $\; \underline{U_x} = \sum_{\bs{l} y\prec x \es} U_y +\sum_{\bs{l} y\lhd x  \es}\US{U}_y \;$ with $\; y \not= x\; $, is the \textbf{radical subspace} of $U_x$.

The \textbf{category of corepresentations} $\corep_{(\F ,\G )} (\p , \le , \lhd )$, or simply $\corep \p$ if the pair ($\F, \G$) causes no confusion, has as objects the corepresentations of $\p$ and morphisms $ U \overset{\varphi}{\longrightarrow} V$ the $\G $-linear maps $\varphi : U_0\longrightarrow V_0 $ such that $\varphi(U_x)\subset V_x$ for each $x\in \p$, if further required that $\varphi$ is an isomorphism and $\varphi(U_x) = V_x$, then $U, V$ are said to be $\textbf{isomorphic}$.

A \textbf{direct sum} of two corepresentations $U, V$ of $\p$ is a corepresentation $U\oplus V = (U_0 \oplus V_0, U_x \oplus V_x \: : \: x\in\p)$. $U$ is a \textbf{indecomposable corepresentation} if its direct summands are only the null corepresentation $0 = (0, 0_x \: : \: x\in\p )$ and $U$. A complete set of indecomposable corepresentations pairwise non-isomorphic of $\p$ over the pair ($\F , \G$) is denote by $\ind_{(\F , \G)}\p$ or $\ind \p$, for brevity. A 2-equipped poset is said to be of \textbf{finite (infinite) corepresentation type} if $|\ind\p|$ is finite (infinite). The list of finite corepresentation type posets and their classification of indecomposables can be deduced from \cite{Kle-Sim-90}. The classification of indecomposable objects of the category $\corep \p$, up to isomorphism, corresponds precisely to the described above matrix problem.

Namely, if $M$ is a matrix corepresentation of $\p$ with $\dimc M = (d_0, d_x \::\:x\in\p)$, one may consider a base $e_1,\dots,e_{d_0}$ of some $d_0$-dimensional $\G$-space $U_0$ and identify each column $(\lambda_1,\dots,\lambda_{d_0})^T$ of $M$ with the element $u=\lambda_1e_1+\dots+\lambda_{d_0}e_{d_0}\in U_0$. Given any column set $X\subset M$, denote by $\F [X]$ and $\G [X]$ the $\F$-span and $\G$-span of $X$ in $U_0$ respectively. Then, one can form a corepresentation $U_M = (U_0, U_x\::\: x\in\p)$ where $U_x=\sum_{y\prec x}\F [M_y]+\sum_{y\lhd x}\G [M_y]$, which satisfies the conditions (\ref{Cond. for Corep.}).

From this point of view, the columns of each vertical stripe $M_x$ represent a system of generators of the space $U_x$ modulo its radical subspace \[ \underline{U_x} = \sum_{\bs{l} y\prec x \\ y\not= x \es} \F [M_y] +\sum_{\bs{l} y\lhd x \\ y\not= x \es}\G [M_y].\]

Hence, the transformations $(a)$-$(c)$ of $M$ reflect both base changing in $U_0$ and generator changing in subspaces $U_x$. Conversely, starting from $U$, one can associate with $U$ a matrix corepresentation denoted by $M_U$. Notice that $\dimc U_M \leq\dimc M$ and $\dimc U \leq\dimc M_U$, in both cases the equality holds if and only if the columns of each stripe $M_x$ are linearly independent modulo the radical column. When dealing with matrix corepresentations $M_U$ corresponding to corepresentations $U$, we always will select such matrices $M=M_U$ that $\dimc M = \dimc U$.

Given any corepresentation $U$ of $\p$ with dimension $d = \dimc U$, the \textbf{support} of $U$ is the subset $\supp U = \{ x\in \p : d_x > 0 \}$. A vector $d$ is \textbf{sincere} if has no zero coordinates; a corepresentation is said to be \textbf{sincere} if its dimension vector is sincere. Every equipped poset having at least one sincere indecomposable corepresentation is called \textbf{sincere} (with respect to corepresentations). A corepresentation is \textbf{trivial} if $\dim_\G U_0 = 1$.

Let $\q \subset \p$ be a subposet, and $U\in \corep \q$, the \textbf{extended corepresentation} $V\in\corep \p$ of $U$ is defined as follows $V_0 = U_0$ and for $x\in\p$

\[ V_x =
\begin{cases}
\sum\limits_{y\prec x} U_y + \sum\limits_{y\lhd x} \US{U}_y \; \text{ for } y\in\q, \; \text{ if } x\in \q^{\vee}; \\
0, \; \text{ otherwise}.
\end{cases}\] Notice that $V_x = U_x$ for $x\in\q$. Moreover if $U\overset{\varphi}{\longrightarrow} U'$ is a morphism in $\corep\q$ then the same $\G$-linear application $\varphi$ induce a morphism $V\overset{\varphi}{\longrightarrow} V'$ in $\corep \p$ between the extended corepresentations.


We have a \textbf{duality principle} for corepresentations of 2-equipped posets. Given a finite dimensional $\G$-space $U_0$, the $\F$-dual space \[ \; U_0^* = Hom_{\F}(U_0, \F)\; \]becomes a $\G$-space in a natural way. Namely, let $\wp (t) = t^2 + pt + q$ be the minimal polynomial of the quadratic field extension $\F\subset \F (\xi ) = \G$, and let  $e_1, \ldots , e_n$ a basis of $U_0 = \G\{ e_1, \ldots , e_n \}$. Then, it can be also considered as a $\F$-space of dimension $2n$ $ U_0 = \F\{ e_1, \xi e_1, \dots , e_n, \xi e_n \} $ then, if $\varphi \in U_0^*$ and it holds $\varphi (e_i) = a_i$ and $\varphi (\xi e_i) = q b_i$, we set $c_i = a_i + \xi b_i$ and $\varphi = \varphi_{(c_1, \ldots , c_n)}$ with the element $(c_1, \ldots , c_n)\in \G^n$ itself and hence identify $U_0^*$ with $\G^n$. So, $U_0^*$ is a $\G$-space. If $u = \sum_{i=1}^{n}(u_ie_i + u_i'\xi e_i)\in U_0$ (with $u_i, u_i' \in\F$), then \[ \varphi_{(c_1, \ldots , c_n)}(u) = \sum\limits_{i=1}^{n} (u_ia_i + u_i'qb_i). \]

Further, for a number $z = x + \xi y \in \G $ ($x, y \in\F$) we have $z\varphi_{(c_1, \ldots , c_n)} = \varphi_{(zc_1, \ldots , zc_n)}$. Denote $\hat{z} = x - \xi y$, notice that $\hat{z} \not= \overline{z} = x - (p + \xi)y \; $ if $\; p\not = 0$. Therefore, it holds $(z\varphi_{(c_1, \ldots , c_n)})(u) = \varphi_{(c_1, \ldots , c_n)}(\hat{z}u) $ when $p = 0$ or char $\F = 2$.

If $U_x$ is a $\F$-subspace of $U_0$, then \[ \; U_x^{\perp} = \{ \varphi \in U_0^* \, : \, U_x \subset Ker\varphi \}\; \] is a $\F$-subspace of $U_0^*$. Besides, $U_x^{\perp}$ is a $\G$-subspace provided that $U_x$ is itself a $\G$-subspace, it also holds the coincidence $U_x^{\perp\perp} = U_x$ under the natural identification $U_0^{**} = U_0$. It is summarized in the following proposition.

\begin{prop}\label{duality principle}
If, $p = 0$ or char $\F = 2$ then for any $\G$-space $ U_x\subset U_0 $ its orthogonal complement \[ U_x^{\perp} = \{ \varphi\in U_0^*\: : \: \varphi (U_x) = 0\} \] is also an $\G$-space of the dual $\G$-space $U_0^*$. Moreover, $U_x^{\perp\perp} = U_x$ under the natural identification $U_0^{**} = U_0$.
\end{prop}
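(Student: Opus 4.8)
The plan is to split the statement into its two assertions and treat them separately. Note first that $U_x^{\perp}$ is automatically an $\F$-subspace of $U_0^{*}$, being the common kernel of the evaluation functionals against the vectors of $U_x$; so the only content of the first assertion is closure under multiplication by $\G$-scalars. Everything there will rest on the identity $(z\varphi)(u)=\varphi(\hat{z}u)$ established just above, which holds precisely under the hypothesis $p=0$ or $\operatorname{char}\F=2$. The second assertion, $U_x^{\perp\perp}=U_x$, is at bottom the classical finite-dimensional double-annihilator theorem over $\F$, which I would then upgrade to an equality of $\G$-subspaces.

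For the first assertion I would fix $\varphi\in U_x^{\perp}$ and an arbitrary $z\in\G$ and show $z\varphi\in U_x^{\perp}$; since $U_x^{\perp}$ is already an $\F$-subspace and $\G=\F+\F\xi$, testing a general $z$ (or just $z=\xi$) suffices. For any $u\in U_x$ the cited identity gives $(z\varphi)(u)=\varphi(\hat{z}u)$, where $\hat{z}=\Real(z)-\xi\,\Imm(z)$ again lies in $\G$. Because $U_x$ is by hypothesis a $\G$-subspace, $\hat{z}u\in U_x$, whence $\varphi(\hat{z}u)=0$, and therefore $(z\varphi)(u)=0$ for every $u\in U_x$. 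Thus $z\varphi$ annihilates $U_x$, i.e. $z\varphi\in U_x^{\perp}$, and $U_x^{\perp}$ is a $\G$-subspace of $U_0^{*}$. The hypothesis enters at exactly one point, namely in invoking the semilinearity formula: without it $(z\varphi)(u)$ would not reduce to an evaluation of $\varphi$ on a vector of $U_x$, and $\G$-closure could genuinely fail.

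For the double-annihilator identity I would argue by dimension. Writing $n=\dim_{\G}U_0$ and $k=\dim_{\G}U_x$, one has $\dim_{\F}U_0=2n$ and $\dim_{\F}U_x=2k$, so the standard annihilator count in the $\F$-space $U_0^{*}=Hom_{\F}(U_0,\F)$ gives $\dim_{\F}U_x^{\perp}=2n-2k$; by the first part this is a $\G$-subspace, of $\G$-dimension $n-k$. Applying the same count once more yields $\dim_{\G}U_x^{\perp\perp}=n-(n-k)=k=\dim_{\G}U_x$. On the other hand the inclusion $U_x\subseteq U_x^{\perp\perp}$ is immediate, since every $u\in U_x$, viewed in $U_0^{**}$ via $\mathrm{ev}_u$, satisfies $\mathrm{ev}_u(\varphi)=\varphi(u)=0$ for all $\varphi\in U_x^{\perp}$. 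Equal dimension together with this inclusion forces $U_x^{\perp\perp}=U_x$.

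Finally I would verify that this is an equality of $\G$-subspaces, i.e. that the natural identification $\eta\colon U_0\to U_0^{**}$, $\eta(u)=\mathrm{ev}_u$, is $\G$-linear. Applying the same construction, and hence the same semilinearity identity, to the $\G$-space $U_0^{*}$ and its dual, together with the involutivity $\hat{\hat{z}}=z$, I compute $(z\,\mathrm{ev}_u)(\varphi)=\mathrm{ev}_u(\hat{z}\varphi)=(\hat{z}\varphi)(u)=\varphi(\hat{\hat{z}}u)=\varphi(zu)=\mathrm{ev}_{zu}(\varphi)$, so $\eta(zu)=z\,\eta(u)$; thus $\eta$ is a $\G$-isomorphism and the equality holds for $\G$-spaces. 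The main obstacle throughout is purely the semilinear bookkeeping: one must keep consistent track of the fact that the $\G$-action on a dual is evaluation precomposed with the conjugation $\hat{(\cdot)}$, and that $\hat{(\cdot)}$ is an involution of $\G$ preserving $\G$-subspaces — which is exactly what the restriction $p=0$ or $\operatorname{char}\F=2$ guarantees.
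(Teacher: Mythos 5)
Your proof is correct and follows essentially the same route as the paper: the paper states this proposition merely as a summary of the discussion immediately preceding it, whose key point is exactly the semilinearity identity $(z\varphi)(u)=\varphi(\hat{z}u)$ (valid when $p=0$ or char $\F =2$) that drives your argument. The paper asserts the $\G$-closure of $U_x^{\perp}$ and the coincidence $U_x^{\perp\perp}=U_x$ without further detail, so your dimension count and your verification that the natural identification $U_0\to U_0^{**}$ is $\G$-linear (via the involutivity $\hat{\hat{z}}=z$) are precisely the details the paper leaves to the reader, and you carry them out correctly.
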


Denote by $\p^* $ the antiisomorphic or dual 2-equipped poset to a poset $\p$. Given a corepresentation $U$ of $\p$, we define the \textbf{dual} corepresentation $U^*$ of $\p^*$ by setting
\[ U^* = (U_0^*, \, U_x^{\perp} \, :  \, x\in\p^* ),\]
where $U_0^*$ and $U_x^{\perp}$ are defined as above.

We also get the following properties for $\F$-subspaces $U_x, U_y$ of a $\G$-space $U_0$. $U_y^{\perp} \subset U_x^{\perp}$ if and only if $U_x\subset U_y$; and $(\UI{U}_x)^{\perp} = (\US{U}_x^{\perp})$, and $(\US{U}_x)^{\perp} = (\UI{U}_x^{\perp}) $.

\begin{lemma}\label{dualsincere}
An arbitrary 2-equipped poset $\p$ is sincere if and only if $\p^*$ is.
\end{lemma}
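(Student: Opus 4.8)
The plan is to prove the equivalence $\p \text{ sincere} \Longleftrightarrow \p^* \text{ sincere}$ by using the duality construction $U \mapsto U^*$ already set up in the excerpt as a bijection on indecomposable corepresentations, and by checking that it preserves sincerity of the dimension vector. Since the statement is symmetric (recall $\p^{**} = \p$), it suffices to prove one implication, say that sincerity of $\p$ implies sincerity of $\p^*$; the converse then follows by applying the same argument to $\p^*$.

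First I would verify that the assignment $U \mapsto U^*$ sends a corepresentation of $\p$ to a genuine corepresentation of $\p^*$. This amounts to checking that the inclusion conditions (\ref{Cond. for Corep.}) are respected after dualizing: for the weak relation I use $U_y^{\perp} \subset U_x^{\perp} \iff U_x \subset U_y$, and for the strong relation I combine this with the stated identities $(\US{U}_x)^{\perp} = \UI{(U_x^{\perp})}$ and $(\UI{U}_x)^{\perp} = \US{(U_x^{\perp})}$ together with Proposition~\ref{duality principle}. The key point is that passing to $\p^*$ reverses all order relations while exchanging $\G$-hulls and $\G$-cohulls, and these two reversals are precisely compatible with the two lines of (\ref{Cond. for Corep.}). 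Using $U_x^{\perp\perp} = U_x$ (again Proposition~\ref{duality principle}, under the natural identification $U_0^{**} = U_0$), the double dual returns the original corepresentation, so $U \mapsto U^*$ is an involutive bijection that moreover respects direct sums and hence carries indecomposables to indecomposables.

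Next I would relate the two dimension vectors. Writing $d = \dimc U$ and $d^* = \dimc U^*$, I expect $d_0^* = \dim_{\G} U_0^* = \dim_{\G} U_0 = d_0$ from the $\G$-space structure placed on $U_0^*$ in the excerpt, and a pointwise correspondence $d_x^* = d_x$ (for the point of $\p^*$ corresponding to $x \in \p$) obtained by dualizing the quotients $U_x/\underline{U_x}$. Concretely, the radical subspace $\underline{U_x}$ is built from the $U_y$ and $\US{U}_y$ with $y \prec x$ or $y \lhd x$; under $\perp$ these turn into the intersections defining the radical of $U_x^{\perp}$ in $\p^*$, so that $\perp$ induces an isomorphism between $U_x/\underline{U_x}$ and the corresponding quotient for $U^*$, preserving its dimension over the appropriate field ($\G$ for strong points, $\F$ for weak ones, noting that strong and weak points correspond under the antiisomorphism $\p \to \p^*$). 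Consequently $\supp U^* = \{x^* : x \in \supp U\}$, so $d$ is sincere exactly when $d^*$ is.

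Finally I would assemble the argument: if $\p$ is sincere, choose a sincere indecomposable $U$; then $U^*$ is an indecomposable corepresentation of $\p^*$ whose dimension vector $d^*$ is sincere by the support identity, so $\p^*$ is sincere. The reverse implication follows by symmetry since $\p^{**} = \p$ and $U^{**} = U$. The main obstacle I anticipate is bookkeeping rather than conceptual: one must confirm that the hypothesis of Proposition~\ref{duality principle} (namely $p = 0$ or $\operatorname{char} \F = 2$) is in force so that $\perp$ genuinely sends $\G$-spaces to $\G$-spaces and is involutive, and one must track carefully that the antiisomorphism $\p \to \p^*$ swaps the roles of weak/strong and of hull/cohull in just the way needed for the quotient-dimension identity $d_x^* = d_x$ to hold coordinate by coordinate.
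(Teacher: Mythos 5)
Your setup of the duality $U \mapsto U^*$ and its formal properties (involutivity via $U_x^{\perp\perp}=U_x$, additivity over direct sums, preservation of indecomposability) is fine, but the proof breaks at its central claim: it is \emph{not} true that $d_x^* = d_x$, nor that $\supp U^*$ corresponds to $\supp U$. The radical $\underline{U_x}$ is a sum over points strictly \emph{below} $x$, and orthogonal complementation turns sums of subspaces below into intersections of subspaces above: the radical of $U^*_x = U_x^{\perp}$ in $\p^*$ is $\bigl(\bigcap_{x \prec y,\, y \neq x} U_y \cap \bigcap_{x \lhd y,\, y\neq x} \UI{U}_y\bigr)^{\perp}$, with empty intersections read as $U_0$. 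Hence $d_x^*$ measures the \emph{codimension} of $U_x$ inside the intersection of the subspaces lying above $x$, not the dimension of $U_x/\underline{U_x}$, and these are genuinely different numbers; $\perp$ does not induce an isomorphism between the two quotients you compare. A minimal counterexample: let $\p$ be a single strong point and $U = (\G , \G )$, which is sincere and indecomposable. Then $U^* = (U_0^*, \G^{\perp}) = (U_0^*, 0)$ has dimension vector $(1,0)$, which is not sincere. So dualizing a sincere indecomposable need not produce a sincere one, and your final assembly step ("choose a sincere indecomposable $U$; then $U^*$ is sincere") fails.

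This failure is precisely why the paper does not dualize $U$ directly. Its proof first modifies $U$: at every point $x$ where $d_x^* = 0$ (i.e. where $U_x$ already fills the intersection of the spaces above it), it replaces $U_x$ by its radical $\underline{U_x}$. Because $U$ is sincere, each such replacement opens a gap of size $d_x > 0$ above the new subspace, so the resulting corepresentation $U_1$ of $\p$ has $\dimc U_1^*$ sincere while remaining indecomposable, and then $U_1^*$ witnesses sincerity of $\p^*$. In the toy example this sends $(\G ,\G )$ to $(\G , 0)$, whose dual $(U_0^*, U_0^*)$ is indeed sincere. So the statement you should be proving is not "$\perp$ preserves sincerity of dimension vectors" (false), but "every sincere indecomposable corepresentation of $\p$ can be modified, without destroying indecomposability, into one whose dual is sincere" --- which is the content of the paper's argument and requires the extra radical-replacement idea that your proposal is missing.
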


\begin{proof}
If $U$ is an indecomposable corepresentation of the poset $\p$ with dimension $d=\dimc U$, and $d^* = \dimc U^*$ then by changing each of the spaces $U_x$ with the condition $d_x^* = 0$ for its radical space $\underline{U_x}$, we obtain a corepresentation $U_1$ of $\p$ with codimension vector $d_1^* = \dimc U_1$ such that $(d_1^*)_x \not= 0$ for all $x\in\p$, this mean that $U_1^*$ is a sincere indecomposable corepresentation of $\p^*$
\end{proof}


It may be analogously defined a matrix corepresentation of a 2-equipped poset $\p$ over the pair of polynomials rings $(\F [t], \G [t])$. Namely, a matrix $(\F [t] , \G [t])$-corepresentation of $\p$ is a rectangular matrix $M[t]$ over $\G [t]$ separated in vertical stripes $(M_x[t])$ ($x\in\p$). However, for the scope of this paper, it is not necessary to define the analogue of the admissible transformations (a)-(c). A \textbf{corepresentation series} $S(X)$ is obtained from a matrix $M[t]$ of $(\F [t], \G [t])$-corepresentation by substituting any square $\G$-matrix $X$ for the variable $t$, and scalar matrices $\lambda I$ of the same size for the coefficients $\lambda \in \G $.

A corepresentation series of $\p$ is called \textbf{sincere} if it induces at least a sincere indecomposable corepresentation of $\p$. We are interested in series of indecomposable corepresentations pairwise non-isomorphic, then, sometimes is necessary to impose additional restrictions on the matrix $X$, see Table \ref{example of series}.

\begin{table}[hbtp]
\centering
\renewcommand{\arraystretch}{1}%
\renewcommand{\tabcolsep}{1mm}%
\setlength{\doublerulesep}{0.14pt} \small%
$S(x) = $ \begin{tabular}[c]{*{4}{|c}|} \multicolumn{1}{c}{\scriptsize{a}} &%
\multicolumn{1}{c}{\scriptsize{p}} &\multicolumn{1}{c}{\scriptsize{q}}&%
\multicolumn{1}{c}{\scriptsize{$\theta$}} \\ \h%
 \s{$\:I\:$}& \s{$X$} & \s{$\:I\:$} & \s{$\:I\:$}\\ \h%
 \s{$I$}&\s{$\xi I$}&\s{} & \s{} \\ \h%
 \multicolumn{4}{c}{(a)} \\
\end{tabular} \hspace{1cm} \small%
\begin{tabular}{*{4}{|c}|} \multicolumn{1}{c}{\scriptsize{a}} &%
\multicolumn{1}{c}{\scriptsize{p}} &\multicolumn{1}{c}{\scriptsize{q}}&%
\multicolumn{1}{c}{\scriptsize{$\theta$}} \\ \h%
 \s{$\:1\:$}& \s{$t$} & \s{$\:1\:$} & \s{$\:1\:$}\\ \h%
 \s{$1$}&\s{$\xi$}&\s{0} & \s{0} \\ \h%
 \multicolumn{4}{c}{(b)} \\
\end{tabular}
\caption{The series of ($\F , \G$)-corepresentations (a) of $K_7 = \{ a\prec p \prec q; \; a\prec q; \; \theta \}$ comes from the ($\F [t], \G [t]$)-corepresentation (b) of $\p$. The $I$ blocks correspond to identity matrix of arbitrary fixed size, and the empty blocks are zero matrices. Whether the block $X$ is a square matrix in a Frobenius Canonical form over $\F$, under ordinary similarity transformations, it holds that $ X \sim X' \Longleftrightarrow S(X) \sim S(X') $. We will resume its discussion later in Section \ref{sectionK7}.}\label{example of series}
\end{table}%

\begin{definition}
Let $\F$ be a infinite field. A 2-equipped poset $\p$ of infinite corepresentation type is of \textbf{one-parameter} with respect to corepresentations over the pair $(\F, \G)$, if it has a series containing almost all indecomposable corepresentations of each given dimension
\end{definition}

\section{Formulation of main theorems}\label{main theorems}

\noindent

From now on all results are formulated without warning for corepresentations (if there is not confusion).

\begin{theoremA}\label{criterion}
Let $\p$ be a 2-equipped poset with $w(\p)\le 4$. Then, $\p$ is one-parameter type if and only if it has precisely a critical subposet, i.e. one of the posets $K_1,\ldots , K_5$ (the  Kleiner's critical posets) and $K_6, \ldots , K_9$ (the critical non trivially equipped posets) listed in Figure \ref{critical equipped posets}.
\end{theoremA}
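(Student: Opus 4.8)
The plan is to establish the equivalence by combining three ingredients already available in the paper: the finite corepresentation type criterion, the explicit one-parameter classifications of the critical posets, and the reduction behaviour of the differentiation algorithm $\seven$. Throughout, recall that a one-parameter poset is by definition of infinite corepresentation type, so one direction of the argument is automatically about detecting infinitude, and the other about bounding the number of parameters by one.

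For the necessity direction (one-parameter $\Rightarrow$ exactly one critical subposet), suppose $\p$ is one-parameter, hence of infinite type. By the finite type criterion for the mixed pair $(\F,\G)$ — which can be deduced from the valued-poset classification of \cite{Kle-Sim-90} — a poset is of finite corepresentation type precisely when it contains none of $K_1,\dots,K_9$; therefore $\p$ must contain at least one critical subposet. To rule out two distinct critical subposets, I would show that any $\p$ containing two of them already contains one of a short list of minimal \emph{super-critical} configurations, and that each such configuration supports at least a two-parameter family of pairwise non-isomorphic indecomposable corepresentations (equivalently, is tame non-domestic or wild). Conceptually this is mirrored by Theorem D: each critical subposet contributes an independent radical direction to the Tits quadratic form, so two of them force corank at least two, i.e. more than one parameter, contradicting one-parameter type.

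For the sufficiency direction (exactly one critical subposet $\Rightarrow$ one-parameter), the base cases are the critical posets themselves, which are one-parameter by the explicit matrix classifications of Theorems C1--C4 together with the descriptions of $K_6,\dots,K_9$ in Sections \ref{sectionK6}--\ref{sectionK9}. For a general $\p$ carrying a unique critical subposet, I would apply the differentiation algorithm $\seven$, which replaces $\p$ by a strictly smaller 2-equipped poset while preserving the one-parameter property and, crucially, the count of critical subposets. Iterating this reduction terminates either at a critical poset or at one of the finitely many maximal sincere one-parameter posets enumerated in Theorem B, each of which is checked directly to be one-parameter; pulling back along the reduction steps then yields that $\p$ itself is one-parameter.

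The main obstacle is controlling the effect of $\seven$ on the distinguished critical subposet in the sufficiency direction: one must verify that differentiating with respect to a suitable pair neither annihilates the unique critical subposet nor manufactures a second one, so that the reduction remains inside the one-parameter regime at every step. Equally delicate, on the necessity side, is the enumeration of the minimal super-critical configurations and the verification that two distinct critical subposets always embed one of them. Both tasks reduce to a finite but substantial case analysis of how each $K_i$ can sit inside a larger poset, and this is precisely where the structural properties of corepresentations of the critical posets established in \cite{Rod-10} carry the argument.
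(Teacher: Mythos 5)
There is a genuine gap, and it sits in the sufficiency direction, which is the heart of the theorem. Your reduction engine --- ``apply D-$\seven$, which replaces $\p$ by a strictly smaller poset while preserving the one-parameter property and the count of critical subposets, then iterate until termination'' --- is exactly the step that is neither proved nor available. Differentiation $\seven$ requires a $\seven$-suitable pair of points, which an arbitrary poset with a unique critical subposet need not possess, and nothing in \cite{Rod-Zav-07} (or in this paper) controls how many critical subposets the derived poset has. The paper never argues this way: it fixes the critical subposet $K\subset\p$ and analyzes the \emph{supports} of indecomposable corepresentations, proving (Theorems \ref{K6 subset} and \ref{K8 subset}, Lemmas \ref{K7 the one} and \ref{K9 the one}) that a sincere one-parameter poset containing $K$ must be on an explicit finite list; those structural results are obtained by restricting a putative sincere indecomposable to $K$ or to $\p\setminus x$, decomposing, and using the $r_W$-invariants of \cite{Rod-10} to reduce to representation problems of garlands and chains. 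Differentiation is applied only to the specific posets where suitable pairs exist. A symptom of the problem is that your argument never uses the hypothesis $w(\p)\le 4$, yet sufficiency is false without it: the antichain $\{a,b,c\}$ with $a,b$ weak and $c$ strong has weight $5$ and contains exactly one critical subposet (namely $K_6=\{a,b\}$), but it is not one-parameter --- its Tits form is indefinite, $f(2,2,2,1)=-2<0$, and attaching a generic $\G$-line at $c$ to the members of the $K_6$-series already produces two-parameter families. Any proof in which the weight bound never enters cannot be correct.

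The necessity direction also has a gap, though a smaller one. The ``at least one critical subposet'' half is fine and agrees with the paper (finite-type criterion deduced from \cite{Kle-Sim-90}). But the ``at most one'' half is only a plan: you neither exhibit the list of minimal super-critical configurations nor construct the promised two-parameter families, and the appeal to Theorem D is backwards, since Theorem D is itself deduced from the classifications underlying Theorems A and B. A cheaper route follows directly from the definitions: the dimension vectors of the corepresentations produced by a single series lie on one ray $n\cdot\mu$, whereas two distinct critical subposets yield infinite families of indecomposables whose dimension vectors lie on rays with two different directions (each supported on its own critical subposet), so no single series can contain almost all indecomposables of every dimension.
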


\begin{figure}[h]
\centering
\begin{picture}(400,80)
\put(36,1){$K_6$}
\multiput(20.35,22)(30,0){2}{$\otimes$}
\put(30,23){{\footnotesize $a$}}
\put(60,23){{\footnotesize $b$}}

\put(126,1){$K_7$}
\multiput(110.35,22)(0,20){3}{$\otimes$}
\put(142,22){$\circ$}
\multiput(115,28.8)(0,20){2}{\line(0,1){12.5}}
\put(120,63){{\footnotesize $q$}}
\put(120,43){{\footnotesize $p$}}
\put(120,23){{\footnotesize $a$}}
\put(150,23){{\footnotesize $\theta$}}

\put(231,1){$K_8$}
\multiput(202,22)(30,0){2}{$\circ$}
\put(260.35,22){$\otimes$}
\put(270,23){{\footnotesize $a$}}
\put(240,23){{\footnotesize $\sigma$}}
\put(210,23){{\footnotesize $\varrho$}}

\put(336,1){$K_9$}
\multiput(320.35,22)(0,20){2}{$\otimes$}
\multiput(352,22)(0,20){2}{$\circ$}
\put(325,28.8){\line(0,1){12.5}}
\put(355,26.9){\line(0,1){16.2}}
\put(330,43){{\footnotesize $p$}}
\put(330,23){{\footnotesize $a$}}
\put(360,43){{\footnotesize $\sigma$}}
\put(360,23){{\footnotesize $\varrho$}}
\end{picture}
\caption{Diagrams of the critical one-parameter 2-equipped posets.}\label{critical equipped posets}
\end{figure}
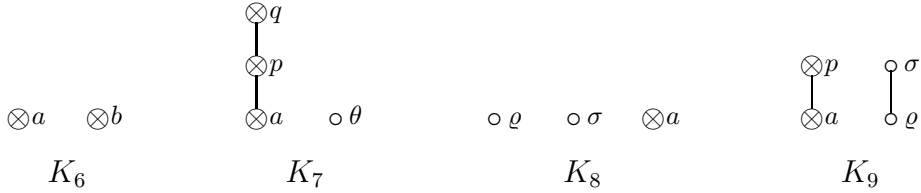

It follows that if $w(\p) \le 3$ of $\p$ does not contain critical 2-equipped poset then is of finite corepresentation type. The list of non trivially 2-equipped posets is given in the Appendix \ref{finitetypeposets}, besides of their classification of indecomposable corepresentations.

\begin{theoremB}\label{sincereposets}
A non trivially 2-equipped poset of one-parameter type is sincere if and only if it is isomorphic or antiisomorphic to one of the 28 posets $K_6,\ldots , K_9$, and $A_{25}, \ldots , A_{48}$ listed in appendixes \ref{sincereK6posets}, \ref{sincereK8posets} and \ref{specialposets}.
\end{theoremB}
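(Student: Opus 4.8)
The plan is to prove both implications, reducing the work up to duality by Lemma~\ref{dualsincere}, which guarantees that $\p$ is sincere exactly when $\p^*$ is, so it suffices to treat one poset from each isomorphism/antiisomorphism class. The \emph{sufficiency} direction is the lighter half: each of the $28$ posets $K_6,\ldots,K_9$ and $A_{25},\ldots,A_{48}$ is exhibited as sincere by displaying a concrete sincere indecomposable corepresentation (or a sincere series), and these are precisely the objects constructed in Sections~\ref{sectionK6}--\ref{sectionK9} and tabulated in appendixes~\ref{sincereK6posets}, \ref{sincereK8posets}, \ref{specialposets}. Thus sufficiency amounts to verifying, poset by poset, that the dimension vector of the displayed indecomposable has no zero coordinate; since each such poset is already one-parameter by Theorem~\ref{criterion} (it contains exactly one critical subposet), this settles the ``if'' part.

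For \emph{necessity}, let $\p$ be a non-trivially equipped one-parameter poset that is sincere. By Theorem~\ref{criterion}, $\p$ contains precisely one critical subposet $K$. First I would argue that $K$ must be one of the non-trivially equipped critical posets $K_6,\ldots,K_9$: if the unique critical subposet were an ordinary Kleiner poset $K_1,\ldots,K_5$, the weak points and strong strict relations of $\p$ would have to lie outside it, and one checks that such points cannot enter the support of any indecomposable without producing a second critical subposet, contradicting sincerity together with Theorem~\ref{criterion}. Having fixed $K\in\{K_6,\ldots,K_9\}$, I would regard $\p$ as an enlargement of $K$ obtained by adjoining further points, and analyse which adjunctions are admissible. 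Two competing obstructions bound the possibilities: by Theorem~\ref{criterion} no enlargement may create a second critical subposet, while sincerity demands that every adjoined point actually occur in the support of some sincere indecomposable corepresentation.

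The decisive tool is therefore a support/sincerity criterion that tells, for a candidate point $x$ added to the current poset, whether $d_x$ can be nonzero in a sincere indecomposable. Here I would use the explicit description of the indecomposable corepresentations of the critical posets --- known from Sections~\ref{sectionK6}--\ref{sectionK9} and from the structural results of \cite{Rod-10} --- together with the extended corepresentation construction, to show that a point lying in a ``forbidden'' position relative to $K$ (for instance strictly above a maximal peak of $K$, strictly below a minimal source of $K$, or incomparable to $K$ in a way that either disconnects it or reproduces a critical configuration) forces $d_x=0$ in every indecomposable, and hence cannot belong to a sincere poset. A priori finiteness of the enumeration I would secure through the associated Tits quadratic form: for one-parameter type this form is nonnegative with small radical, so its sincere positive roots have uniformly bounded coordinates and there are only finitely many of them, which caps both the number of points of $\p$ and the shape of admissible enlargements, turning the classification into a finite check.

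The main obstacle will be the exhaustiveness of this combinatorial enumeration: ruling out, for each of the four critical shapes, every enlargement that is neither on the list nor equivalent to one on it, by exhibiting in each discarded case either a second critical subposet or a point that is necessarily non-sincere. The accompanying bookkeeping --- normalising each surviving poset up to isomorphism and antiisomorphism (again invoking Lemma~\ref{dualsincere}) and matching it against the named posets $A_{25},\ldots,A_{48}$ in appendixes~\ref{sincereK6posets}, \ref{sincereK8posets}, \ref{specialposets} --- is routine but lengthy, and is where the count of exactly $28$ posets is finally pinned down.
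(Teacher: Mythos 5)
Your skeleton does match the paper's: sufficiency from the explicit sincere indecomposables constructed in Sections \ref{sectionK6}--\ref{sectionK9}, and necessity by fixing the unique critical subposet given by Theorem A and ruling out enlargements (in the paper, Lemmas \ref{K7 the one} and \ref{K9 the one} show that $K_7$ and $K_9$ admit no sincere enlargement at all, while Theorems \ref{K6 subset} and \ref{K8 subset}, fed by Lemmas \ref{K6 weak well ins. sets}, \ref{K6 strong well ins. sets} and Corollary \ref{One-par = Mi type}, handle $K_6$ and $K_8$). But your two load-bearing steps do not hold up. The finiteness argument via the Tits form is false as stated: sincere positive roots of a one-parameter poset are \emph{not} uniformly bounded and are \emph{not} finite in number --- the sincere dimension vectors in appendixes \ref{sincereK6posets}--\ref{specialposets} grow linearly in $k$ (e.g.\ $d_0=2k+1$ for $A_{28}$), i.e.\ sincere roots come in infinite families $d+k\mu$ with $\mu$ an imaginary root, and only the classes modulo $\mu$ are finite. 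Worse, passing between roots and dimension vectors of indecomposables is exactly the content of Theorem D, which this paper deduces \emph{from} the classification (Theorems B and C1--C4); invoking it inside the proof of Theorem B is circular, and the nonnegativity of $f_{\p}$ that you assume is nowhere established. The paper needs no quadratic forms here: finiteness is obtained by induction on the number of adjoined points ($|(K_7)^{\blacktriangledown}|$ in Lemma \ref{K7 the one}, $n+m$ in Lemma \ref{K6 weak well ins. sets}).

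The second and more serious gap is that the heart of the necessity direction --- showing that a point in a ``forbidden'' position has $d_x=0$ in every indecomposable --- is exactly the hard part, and you leave it at ``one checks''. The paper's engine is concrete: take a sincere indecomposable $U$ of $\p$, restrict it to $\p\setminus x$ (or to the critical subposet), decompose $V=\bigoplus_i V^i$ into indecomposables of already-classified type, use the computed codimension invariants $r_W$ of those summands (Table \ref{K7 coreps r no 0} for $K_7$, Table \ref{K9 coreps r no 0} for $K_9$) to constrain which summands can occur, and then observe that the vertical stripe at $x$ becomes a matrix problem equivalent to representations of a chain or a garland (Figures \ref{K7 y chain} and \ref{guirnalda}), whose sincere indecomposable representations are supported on singletons --- forcing $d_a=0$ or $d_b=0$ and contradicting sincerity. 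Without this reduction, or some substitute for it, your enumeration has no proof behind it; and note that the case you rightly flag, where the unique critical subposet is an ordinary Kleiner poset $K_1,\ldots,K_5$ inside a non-trivially equipped $\p$, requires the same machinery rather than the unsupported assertion that weak points outside the critical subposet ``cannot enter the support of any indecomposable''.
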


An explicit classifications of indecomposable corepresentations of the critical posets $K_6$ and $K_8$ were obtained in \cite{Rod-10}, however we recall them in Theorems C1 and C3. The classifications of indecomposables of $K_7$ and $K_9$ are given in Theorems C2 and C4 respectively. The remaining classifications of indecomposable corepresentations of the sincere 2-equipped posets are given in propositions \ref{Classification of coreps of A25}-\ref{A42coreps}, and \ref{A26A27coreps}, \ref{A40coreps}, \ref{A30'scoreps} and \ref{A43'scoreps}.

\begin{theoremC1}\label{Theorem C1}
The critical 2-equipped poset $K_6 = \{a; \: b \}$ where $a; \: b$ are weak points, has 6 types of indecomposable corepresentations over the pair ($\F ,\ \G$) (up to duality and permutation), even the series of corepresentations, see Tables \ref{series of coreps of K6} and \ref{List of coreps of A25}.
\end{theoremC1}

\begin{theoremC2}\label{Theorem C2}
The critical 2-equipped poset $K_7 = \{ a\prec p \prec q; \: a\prec q; \: \theta \}$ where $\theta$ is the unique strong point, has 48 types of indecomposable corepresentations over the pair ($\F ,\ \G$) with Tits form not 0, it also has a corepresentation series.
\end{theoremC2}

\begin{theoremC3}\label{Theorem C3}
The critical 2-equipped poset $K_8 = \{ \varrho; \: \sigma; \: a \}$ where $a$ is the unique weak point, has 11 types of indecomposable corepresentations over the pair  ($\F, \G$) (up to duality and permutation), even the series of corepresentations, .
\end{theoremC3}

\begin{theoremC4}\label{Theorem C4}
The critical 2-equipped poset $K_9 = \{ a\prec b; \: \zeta \lhd \eta \}$ where $a, b$ are weak points and $\zeta$, $\eta$ are strong points, has 48 types of indecomposable corepresentations over the pair ($\F, \G$) with Tits form not 0, it aslo has a corepresentation series.
\end{theoremC4}

\subsection{The Tits quadratic form asociated to a 2-equipped poset}\label{TitsForm}
\noindent
There exists a known relation between the theory of quadratic forms and the representation theory of posets, see \cite{Ri84}, and as expected there also exists a relation with corepresentations of 2-equipped posets. Namely, it is defined a quadratic Tits form $f_{\p}(d)$ of a vector $d$ associated to a 2-equipped poset $\p$. Furthermore, there is relationship between roots of $f_{\p}$ and dimension vectors of indecomposable corepresentations of $\p$ (see Theorem \ref{roots}).

Given a 2-equipped poset $\p$, we set $\p^{\bullet} = \p\cup \{0\}$ (where 0 is a formal symbol incomparable with the elements of $\p$). A vector $e_x\in\Z^{\p^{\bullet}}$ ($x\in \p^{\bullet}$) is a \textbf{simple root} whether $(e_x)_x = 1$ and $(e_x)_y = 0$ for $x\not=y$. The \textbf{Tits quadratic form} $f_{\p} = f$ associated to $\p$ is defined over a given vector $d = (d_0,\ d_x\ : \ x\in\p )$ by the formula \[  f(d) = \sum\limits_{ x,y\in \p^{\bullet} }  l_{xy}d_xd_y    - 2d_0\sum\limits_{x\in\p} d_x, \]
where $l_{xy} = 0 $, if  $x\Inc y$; $l_{xy} = 1 $, if  $x\prec y$; $l_{xy} = 2 $, if  $x\lhd y$ or $x=y=0$. Recall that if $\langle x,y \rangle = \frac{1}{4}[f(x+y)-f(x-y)]$ is the corresponding symmetric bilineal form, the \textbf{reflection} in the point $x\in\p^{\bullet}$ is an application $\rho_x: \Z^{\p^{\bullet}} \longmapsto \Z^{\p^{\bullet}}$ defined by $\rho_x(d) = d - \frac{2}{l_{xx}}\langle d,e_x \rangle \cdot e_x. $

A \textbf{root} of a Tits form $f_{\p}$ is a vector $d$ associated to $\p$ which is obtained by reflections from a simple root. Let $d$, $d'$ be roots of $f_{\p}$, we write $d\le d'$ if $d_x \le d'_x$ ($x\in\p^{\bullet}$). Throughout, we consider \textbf{admissible roots} which are non-negative roots, with $d_0 > 0$. A vector $\mu \ge \bf{0}$ associated to $\p$ is an \textbf{imaginary root} of $f_{\p}$ if $f_{\p}(\mu) = 0$. A vector which is neither root nor imaginary root is called \textbf{special vector}.

The following result describes a close relation between Tits form $f_{\p}$ associated to $\p$ and its dimension vectors of indecomposable corepresentations.

\begin{theoremD}[Dimensions of indescomponibles]\label{roots}
A vector $d > 0$, corresponding to an one-parameter 2-equipped poset, is a dimension of an indecomposable corepresentation if and only if it is an admissible root, an imaginary root, or a special vector, moreover
\begin{enumerate}
\item[(a)] If $d$ is an admissible root or a special vector, then there exists precisely one (up to isomorphism) indecomposable corepresentation of dimension $d$.
\item[(b)] If $d$ is an imaginary root, then there exist (in case of infinite fields $\F$, $\G$) infinitely many non isomorphic indecomposable corepresentations of dimension $d$.
\end{enumerate}
\end{theoremD}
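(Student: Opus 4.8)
The plan is to reduce the statement to the sincere posets already classified, and then to match the explicit list of indecomposable dimensions against the roots, imaginary roots and special vectors of $f_{\p}$ case by case, using reduction functors to organize the correspondence and the duality principle to halve the work.

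First I would reduce to the sincere case. If $U$ is an indecomposable corepresentation of $\p$ with $d=\dimc U$, then $U$ is sincere on its support $\q=\supp U$, which is a full subposet, and the formula for $f$ shows that $f_{\p}(d)=f_{\q}(d|_{\q})$ whenever $\supp d\subset \q$ (the coefficients $l_{xy}$ and the cross term $-2d_0\sum_x d_x$ involve only coordinates that are nonzero on $\q^{\bullet}$). The same locality shows that the trichotomy root / imaginary root / special vector for $d$ is decided inside $\Z^{\q^{\bullet}}$ by $f_{\q}$. Since $\p$ is one-parameter, Theorem~A forces each such $\q$ to be of finite corepresentation type or again one-parameter; in the latter case $\q$ is isomorphic or antiisomorphic to one of the $28$ posets of Theorem~B. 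For finite-type supports this follows from the finite corepresentation type classification deducible from \cite{Kle-Sim-90}: no imaginary root can occur (an imaginary root would signal infinite type), and each indecomposable dimension is a unique admissible root or special vector. It therefore remains to treat a sincere indecomposable on each of the $28$ one-parameter posets.

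Next I would install the reflection machinery. For an admissible point $x$ I would use the differentiation algorithm $\seven$ of \cite{Rod-Zav-07}, together with the special reductions of \cite{Rod-10}, to produce a reduction functor that, away from a short explicit list of corepresentations absorbed by the reduction, sends indecomposables to indecomposables and transforms $\dimc$ by the reflection $\rho_x$. Because $l_{xx}=1$ at a weak point while $l_{xx}=2$ at a strong point, reflections at weak and strong points act by $d\mapsto d-2\langle d,e_x\rangle e_x$ and $d\mapsto d-\langle d,e_x\rangle e_x$ respectively; this asymmetry is the structural source of the whole phenomenon. Running these functors, the admissible roots turn out to be exactly the vectors reachable from a simple root, and the functors carry the uniqueness of the trivial indecomposable through the orbit, yielding precisely one indecomposable per admissible-root dimension and thereby establishing part~(a) for admissible roots.

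For the imaginary roots of part~(b) I would determine, for each one-parameter sincere poset, the minimal positive generator $\mu$ of the radical of $f_{\p}$, so that $f_{\p}(n\mu)=0$ for every $n\ge 1$, and show that the corepresentation series attached to the poset (of the form recorded in Table~\ref{example of series}) realizes dimension $n\mu$ for every square matrix $X$ of size $n$. Over an infinite field $\F$, distinct Frobenius canonical forms $X$ give pairwise non-isomorphic indecomposables, producing the infinite family; conversely every indecomposable whose dimension is an imaginary root must occur in this series, by the one-parameter criterion of Theorem~A. The main obstacle is the special vectors, which have no counterpart in the classical single-field theory: these are dimension vectors satisfying neither the root condition nor $f_{\p}=0$, and they are forced by the mismatch between counting weak-point coordinates over $\F$ and strong-point coordinates over $\G$ in the mixed coproblem. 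I do not expect a uniform argument here; the decisive step is to read off, from the explicit matrix classifications of Theorems~C1--C4 and the propositions of Sections~\ref{sectionK6}--\ref{sectionK9}, which listed dimensions fail to be roots or imaginary roots, to verify that each such vector occurs with multiplicity one, and to confirm that the remaining listed dimensions are exactly the admissible and imaginary roots. Proposition~\ref{duality principle} and Lemma~\ref{dualsincere} reduce this bookkeeping by a factor of two, since $f_{\p^{*}}$ is $f_{\p}$ up to the obvious relabeling and the three classes of vectors correspond under dualization.
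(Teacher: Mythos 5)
Your overall skeleton --- localize to the support, invoke Theorems A and B to land on finite-type subposets or the $28$ sincere one-parameter posets, realize imaginary-root dimensions by the series, settle the rest by matching the explicit classifications against the dimension tables, and use duality to halve the work --- is essentially the paper's own route: the paper never writes a separate argument for Theorem D, whose proof is precisely the inspection of Theorems C1--C4, the propositions of Sections \ref{sectionK6}--\ref{sectionK9}, and the appendix tables (Appendixes \ref{sincereK6posets}--\ref{dimensions of K7 and K9}) recording each minimal dimension vector, its Tits form value $f$, and the step $\mu$.

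The genuine gap is your ``reflection machinery''. The algorithm $\seven$ of \cite{Rod-Zav-07} is a differentiation with respect to a suitable \emph{pair} of points $(a,\theta)$, not a single point, and it replaces $\p$ by a derived poset $\p'$ with a different, larger point set: $K_7$, with four points, derives to a seven-point poset (Figure \ref{K7 derivado}), and similarly for $K_9$ and $A_{40}^*$. Hence the induced map on dimension vectors goes from $\Z^{\p^{\bullet}}$ to $\Z^{(\p')^{\bullet}}$, between different lattices, and cannot coincide with the reflection $\rho_x$, which is an endomorphism of $\Z^{\p^{\bullet}}$. Nothing in the paper, in \cite{Rod-Zav-07}, or in \cite{Rod-10} establishes a BGP-type statement that differentiation realizes Tits reflections, preserves indecomposability away from a finite list, \emph{and} sweeps the admissible roots out of the simple ones; building such a theory for corepresentations over $(\F ,\G )$ would be a separate project. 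So part (a) for admissible roots is not proved by that step. (A smaller point: your parenthetical claim that root-ness localizes to the support also needs an argument, since a root is defined by a reflection sequence that may leave $\Z^{\q^{\bullet}}$; only the $f$-values localize for free.) Your proposal survives only because its final step --- confirming that the listed dimensions, each occurring once per type, are exactly the admissible roots and special vectors, with the series filling the imaginary roots --- already subsumes part (a); that exhaustive matching, not the reflection functors, is what must carry the whole theorem, and it is what the paper actually does.
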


\begin{remark}
Sincere indecomposable corepresentations which are not covered by a series are called \textbf{discrete}. Due to Theorem D, those discrete indecomposables, whose dimension vectors are either an admissible root or a special vector, are characterized by them.
\end{remark}

Given two vectors $d = (d_0, d_x \: : \: x\in \p)$ and $d' = (d_0', d_x' \: : \: x\in \p)$ associated to $\p$, they are of \textbf{the same type} if $d - d' = \mu$ where $\mu$ is an imaginary root of $f_{\p}$. A \textbf{minimal root} $\mu_0$ of $f_{\p}$ is a minimal element of a set of admissible roots of the same type of $f_{\p}$. Given $U,\ V \in \ind\p$ with dimension vectors $d,\ d'$ respectively, which are not imaginary roots, they are said to be of \textbf{the same type} if $d$ and $d'$ are of the same type. Moreover, we attach a number $m$ to all indecomposables of the same type, and refer to an specific one by its dimension in the following way $U =\ _{d_0}(\p - m)$ and $V =\ _{d_0'}(\p - m)$.

If the dimension vectors $d, \ d'$ are imaginary roots but $U$ and $V$ are not covered by a series of corepresentations, and besides they are obtained from integration of another indecomposables of the same type, we say that $U$ and $V$ are of the same type. This is just a convention to refer in a simply way to the indecomposable corepresentations.

The dimension vectors, of the one-parameter 2-equipped posets having one type of discrete indecomposables, are presented together with their diagrams in the Appendixes \ref{sincereK6posets} - \ref{specialposets}, while in the Appendixes \ref{dimensions} and \ref{dimensions of K7 and K9} are presented the list of minimal dimension vectors both of the posets with more than one type of discrete indecomposables, and of the critical posets $K_7$ and $K_9$, respectively.

\section{Classification for one-parameter 2-equipped posets containing $K_6 = \{ \otimes \ \otimes \}$}\label{sectionK6}

We present a complete classification\footnote{Although the classification of indecomposables for $K_6$, $A_{25}$, $A_{28}$, $A_{38}$, $A_{41}$ and their dual posets have already been realized in \cite{Rod-10}, these are presented here in a convenient way for the purpose of this paper}, in matrix form, of indecomposable corepresentations of the posets $K_6$, $A_{25}$, $A_{28}$, $A_{29}$, $A_{33}$, $A_{34}$, $A_{38}$, $A_{39}$, $A_{41}$, $A_{42}$, $A_{45}$, $A_{46}$ and their dual sets over the pair $(\F , \G)$. These posets are those sincere one-parameter 2-equipped posets that contain the poset $K_6$. The classifications of indecomposables for above mentioned posets are obtained using differentiation and integration $\seven$ developed in \cite{Rod-Zav-07}, except for the series of corepresentations of $K_6$.

Let $\q,\ \p$ be a 2-equipped posets such that $\q$ is a subposet of the derived poset $\p'$ with respect to the algorithm D-$\seven$. We write \[ _{d_0}(\p - m) =  \Int_{d_0'}(\q - n), \]to mean that the corepresentation $_{d_0}(\p - m)$ comes from the (extended) corepresentation $_{d_0'}(\q - n)$ of $\p'$ by integration $\seven$. Notice that there is not a unique way to compute it.

Besides the above notation, we also use a direct way to build a corepresentation. Given a maximal (minimal) strong point $\zeta$ of a 2-equipped poset $\p$, we set $\mathcal{R} \subset \mathcal{Q} = \p\setminus \zeta$ the subset of comparable points in $\mathcal{Q}$ with $\zeta$. Let $U$ be an indecomposable corepresentation of $\mathcal{Q}$, such that $\US{U}_{\mathcal{R}}\not= U_0$ ($\UI{U}_{\mathcal{R}}\not= 0$), then $U$ may be be extended to a corepresentation $V$ of $\p$ given by $V_0 = U_0$, $V_x = U_x$ for $x\in \mathcal{Q}$ and $V_{\zeta}$ being an arbitrary $\G$-space containing properly (contained into) the space $\US{U}_{\mathcal{R}}$ ($\UI{U}_{\mathcal{R}}$) provided that $dim_{\G} U_{\zeta}/\US{U}_{\mathcal{R}} = 1$ ($dim_{\G} U_{\zeta} =  1$) and such that $V$ does not depend on $V_{\zeta}$ up to isomorphism. $V$ is denoted by $U^{\zeta}$ ($U_{\zeta}$).

\begin{example}
In table \ref{Example of corepresentation extended} is considered $\p = A_{25} = \{ a; \: b\lhd \eta \}$ and $\p^* = \{ a; \: \eta \lhd b \}$ where $\eta$ is the only strong point, and it is maximal and minimal respectively. $K_6 = \{ a; \: b \}$ is a subposet of both of them and its matrix indecomposable corepresentation $_4(K_6 - 4)$ satisfies conditions above, then it can be extended to a corepresentation of $A_{25}$ or $A_{25}^*$, moreover these extended corepresentations are indecomposables.
\end{example}
\begin{table}[h]\renewcommand{\arraystretch}{1}\renewcommand{\tabcolsep}{1mm}
\centering
\begin{tabular}[b]{|cccc|cccc|}
\multicolumn{4}{c}{\tiny{$a$}} & \multicolumn{4}{c}{\tiny{$b$}} \\ \hline
    $1$ &   $\xi$ & \s{$0$} & \s{$0$} & \s{$0$} & \s{$0$} & \s{$0$} & \s{$0$} \\
\s{$0$} &     $1$ & \s{$0$} & \s{$0$} &     $1$ &   $\xi$ & \s{$0$} & \s{$0$} \\
\s{$0$} & \s{$0$} &     $1$ &   $\xi$ & \s{$0$} &     $1$ & \s{$0$} & \s{$0$} \\
\s{$0$} & \s{$0$} & \s{$0$} &     $1$ & \s{$0$} & \s{$0$} &     $1$ &   $\xi$ \\ \hline
    \multicolumn{8}{c}{\s{$_4(K_6 - 4)$}} \\
\end{tabular} \hspace{0.7cm} \begin{tabular}[b]{|cccc|cccc|c|}
\multicolumn{4}{c}{\tiny{$a$}} & \multicolumn{4}{c}{\tiny{$b$}} & \multicolumn{1}{c}{\tiny{$\eta$}} \\ \hline
    $1$ &     $\xi$ & \s{$0$} & \s{$0$} & \s{$0$} & \s{$0$} & \s{$0$} & \s{$0$} &     $1$ \\
\s{$0$} &       $1$ & \s{$0$} & \s{$0$} &     $1$ &   $\xi$ & \s{$0$} & \s{$0$} & \s{$0$} \\
\s{$0$} &   \s{$0$} &     $1$ &   $\xi$ & \s{$0$} &     $1$ & \s{$0$} & \s{$0$} & \s{$0$} \\
\s{$0$} &   \s{$0$} & \s{$0$} &     $1$ & \s{$0$} & \s{$0$} &     $1$ &   $\xi$ & \s{$0$} \\ \hline
\multicolumn{9}{c}{\s{$_4(A_{25} - 5)$}} \\
\end{tabular} \hspace{0.7cm} \begin{tabular}[b]{|cccc|c|cc|}
\multicolumn{4}{c}{\tiny{$a$}} & \multicolumn{1}{c}{\tiny{$\eta$}} & \multicolumn{2}{c}{\tiny{$b$}} \\ \hline
    $1$ &   $\xi$ & \s{$0$} & \s{$0$} & \s{$0$} & \s{$0$} & \s{$0$} \\
\s{$0$} &     $1$ & \s{$0$} & \s{$0$} & \s{$0$} &     $1$ &   $\xi$ \\
\s{$0$} & \s{$0$} &     $1$ &   $\xi$ & \s{$0$} & \s{$0$} &     $1$ \\
\s{$0$} & \s{$0$} & \s{$0$} &     $1$ &     $1$ & \s{$0$} & \s{$0$} \\ \hline
\multicolumn{7}{c}{\s{$_4(A_{25}^* - 5)$}} \\
\end{tabular}
\caption{Example of extended corepresentations, $_4(A_{25} - 5) =\ _4(K_6 - 4)^{\eta}$ and $_4(A_{25}^* - 5) = \ _4(K_6 - 4)_{\eta}$.}\label{Example of corepresentation extended}
\end{table}

\begin{proof}[Proof of Theorem C1]
We recall the main facts of that classification. Let $U$ be an indecomposable corepresentation of $K_6$. There are considered three cases. When $\US{U}_a \not= U_0$ or $\US{U}_b \not= U_0$, then $U$ may be considered as no sincere indecomposable corepresentation of $A_{25}$ and then obtained by integration $\seven$ from another one of $A_{25}$, in the process is also obtained the classification of $A_{25}$, see proposition \ref{Classification of coreps of A25} and table \ref{List of coreps of A25}. The second case is when $\US{U}_a = \US{U}_b = U_0$ and $\UI{U}_a = \UI{U}_b = 0$, then by matrix considerations the problem corresponds to the matrix pencil problem of Kronecker over the field $\F$ so, it is obtained the indecomposable corepresentation type $_n(K_6 - 5)$ and the series $S(X) = \ _n(K_6 - 6)(X)$, where $X$ is an indecomposable matrix Frobenius block (Rational block in other terminology) over $\F$. The last case is when $\US{U}_a = \US{U}_b = U_0$ and, $\UI{U}_a \not= 0$ or $\UI{U}_b \not= 0$, this is the dual case of the first one, then $U$ may be obtained by duality, and classification for $A_{25}^*$ is also obtained, then it holds the following proposition.
\end{proof}

\begin{table}[hbtp]\renewcommand{\arraystretch}{1}\renewcommand{\tabcolsep}{1mm}
\centering
\begin{tabular}[b]{|c|c|}
\multicolumn{1}{c}{\tiny{$a$}} & \multicolumn{1}{c}{\tiny{$b$}} \\ \hline
$I_n$ & $I_n + \xi J_n^+(0)$ \\ \hline
    \multicolumn{2}{c}{\s{$_n(K_6 - 5)$}} \\
\end{tabular} \hspace{0.7cm} \begin{tabular}[b]{|c|c|}
\multicolumn{1}{c}{\tiny{$a$}} & \multicolumn{1}{c}{\tiny{$b$}} \\ \hline
$I_n$ & $\xi I_n + X$ \\ \hline
    \multicolumn{2}{c}{\s{$_n(K_6 - 6)(X)$}} \\
\end{tabular}
\caption{$_n(K_6 - 5)$ and the series of indecomposable corepresentations $S(X) = \ _n(K_6 - 6)(X)$, where $X$ is a canonical matrix Frobenius block over $\F$.}\label{series of coreps of K6}
\end{table}

\begin{prop}\label{Classification of coreps of A25}
Each of the posets $A_{25} = \{ a ; \: b\lhd \eta \}$ and $A_{25}^* = \{ a ; \: \eta\lhd b \}$ has 5 types of indecomposable corepresentations sincere at the points $\eta$, they are obtained in the form of table \ref{List of coreps of A25}.
\end{prop}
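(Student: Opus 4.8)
The plan is to exploit that $\eta$ is the unique strong point of $A_{25}$ and that it is maximal, comparable (strongly) only with $b$; dually, in $A_{25}^*$ the point $\eta$ is minimal and again comparable only with $b$. In the notation of the extension construction this is exactly the situation $\mathcal{Q}=A_{25}\setminus\eta=K_6$ with $\mathcal{R}=\{b\}$, so that integration $\seven$ over $\eta$ reduces the coproblem for $A_{25}$ to the already solved coproblem for $K_6$ (Theorem C1). As the only relation into $\eta$ is $b\lhd\eta$, the radical of the $\eta$-stripe is $\underline{V_\eta}=\US{V}_b$, whence $d_\eta=\dim_\G V_\eta/\US{V}_b$ and $V$ is sincere at $\eta$ if and only if $V_\eta\supsetneq\US{V}_b$.

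First I would set up a bijection, up to isomorphism, between the indecomposables of $K_6$ with $\US{U}_b\neq U_0$ and the $\eta$-sincere indecomposables of $A_{25}$. Given $V\in\corep A_{25}$ indecomposable with $d_\eta\neq0$, let $U$ be its restriction to $K_6$ (forget $\eta$): $U_0=V_0$, $U_a=V_a$, $U_b=V_b$. Then $\US{U}_b=\US{V}_b\neq V_0$, and one shows $\dim_\G V_\eta/\US{U}_b=1$, because a $\G$-stripe of $\G$-dimension $\geq2$ at a maximal strong point always splits off a trivial summand, contradicting indecomposability. Thus $V=U^{\eta}$ in the sense of the extension construction, and $U$ must be indecomposable, since any decomposition of $U$ would, by the stated independence of $V_\eta$ up to isomorphism, lift to one of $V$. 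Conversely, each extension $U^{\eta}$ with $U$ indecomposable and $\US{U}_b\neq U_0$ is an indecomposable corepresentation of $A_{25}$ sincere at $\eta$.

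A direct inspection of the list in Theorem C1 then shows that exactly five isomorphism types of $K_6$-indecomposables satisfy $\US{U}_b\neq U_0$: the Kronecker families $_n(K_6-5)$ and the series $_n(K_6-6)(X)$ are excluded, as they have $\US{U}_a=\US{U}_b=U_0$, and attaching $\eta$ to $b$ breaks the $a\leftrightarrow b$ permutation symmetry of $K_6$, so the surviving types split as stated. For each such $U$ the matrix of $U^{\eta}$ is obtained by adjoining a single $\eta$-column representing a generator of $V_\eta$ modulo $\US{U}_b$, exactly as in the Example where $_4(A_{25}-5)={}_4(K_6-4)^{\eta}$; this produces the five entries of Table~\ref{List of coreps of A25}. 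For $A_{25}^*=\{a;\ \eta\lhd b\}$ the point $\eta$ is minimal strong, so the dual extension $(-)_{\eta}$ applies with the dual condition $\UI{U}_b\neq0$ and $d_\eta=\dim_\G V_\eta$; I would transport the previous steps through the duality functor $U\mapsto U^{*}$ of Proposition~\ref{duality principle}, using $(\US{U}_b)^{\perp}=\UI{U_b^{\perp}}$ to turn $\US{U}_b\neq U_0$ into the condition $\UI{\,\cdot\,}_b\neq0$. Duality preserves indecomposability and fixes the point $\eta$, yielding the five types of $A_{25}^*$, whose matrices are the dual list of Table~\ref{List of coreps of A25} (compare $_4(A_{25}^*-5)={}_4(K_6-4)_{\eta}$).

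The delicate point is the bijection: verifying that an $\eta$-sincere indecomposable forces $\dim_\G V_\eta/\US{V}_b=1$ and that restriction and extension are mutually inverse and preserve indecomposability, i.e. that no information is lost or created at the strong vertex $\eta$. Once this is in place, reading the count five off Theorem C1 and writing down the adjoined $\eta$-columns is routine bookkeeping.
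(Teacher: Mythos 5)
Your plan rests on the claim that restriction to $K_6$ and extension $(-)^{\eta}$ are mutually inverse bijections between the $\eta$-sincere indecomposables of $A_{25}$ and the indecomposables $U$ of $K_6$ with $\US{U}_b \neq U_0$. This is false, and the failure is visible in the paper's own Table~\ref{List of coreps of A25}: only four of the five types of $A_{25}$ are extensions of $K_6$-indecomposables, namely $(A_{25}-1)$, $(A_{25}-3)$, $(A_{25}-4)$, $(A_{25}-5)$, which coincide with $(K_6-i)^{\eta}$ for $i \in \{1^*, 2^*, 3, 4\}$; the fifth type is obtained as ${}_{2n}(A_{25}-2) = \Int {}_{2n-1}(A_{25}-3)$, i.e.\ by integration $\seven$ of a sincere corepresentation of $A_{25}$ itself, not from any $K_6$-indecomposable. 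Concretely, the minimal representative of this type (of dimension $(2;2,0,1)$, see Appendix~\ref{dimensions}) is $(F_{14}-C)$ viewed in $\corep A_{25}$: $V_0 = \G^2$, $V_a = \F(1,1)^T + \F(0,\xi)^T$, $V_b = 0$, $V_{\eta} = \G(1,0)^T$. It is indecomposable and sincere at $\eta$, but its restriction $U = V|_{K_6} = (\G^2; V_a, 0)$ decomposes: taking $W_1 = \G(1,1)^T$ and $W_2 = \G(0,1)^T$ one gets $V_a = \F(1,1)^T \oplus \F\xi(0,1)^T$, so $U \cong (F_{13}-A)\oplus(F_{13}-A)$. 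Hence your step ``$U$ must be indecomposable, since any decomposition of $U$ would lift to one of $V$'' is wrong -- this decomposition of $U$ is simply not compatible with $V_{\eta}$ -- and for this $U$ the extension $U^{\eta}$ is not even well defined, since the isomorphism class of the result depends on the choice of $V_{\eta}$ (the choice $V_{\eta} = \G(1,1)^T$ yields a decomposable object). Your count of ``exactly five'' types therefore cannot be read off Theorem~C1 in the way you propose; your construction produces only four of them.

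A second, independent, problem is the lemma you invoke to force $d_{\eta}=1$: the principle ``a $\G$-stripe of $\G$-dimension $\geq 2$ at a maximal strong point always splits off a trivial summand'' is false in general. In $K_7$ the strong point $\theta$ is incomparable to all other points (hence maximal), yet the indecomposable of type $20^*$ has dimension $(4,2,2,2,3)$, i.e.\ $d_{\theta}=3$ (Appendix~\ref{dimensions of K7 and K9}). The equality $d_{\eta}=1$ does hold for $A_{25}$, but it is a consequence of the classification, not of the maximality of $\eta$ alone. This is precisely why the paper does not argue by restriction/extension: its proof of Proposition~\ref{Classification of coreps of A25} is embedded in the proof of Theorem~C1 and runs through the differentiation--integration algorithm D-$\seven$ of \cite{Rod-Zav-07}, where $K_6$-indecomposables with $\US{U}_a \neq U_0$ or $\US{U}_b \neq U_0$ are regarded as non-sincere objects of $\corep A_{25}$ and all five $A_{25}$-types are generated recursively from initial corepresentations of finite-type posets, the type $(A_{25}-2)$ arising only as an integral of $(A_{25}-3)$. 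To salvage your approach you would need either to invoke that machinery or to supply a separate argument producing and isolating the type $(A_{25}-2)$, whose $K_6$-restriction your bijection cannot see.
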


\begin{table}[hbtp]
\centering
\renewcommand{\arraystretch}{1.2}
\renewcommand{\tabcolsep}{1.5mm}
\setlength{\doublerulesep}{0.14pt}
\setlength{\arrayrulewidth}{0.2pt}
\scalebox{0.75}{
\begin{tabular}{|rcl|rcl|rcl|}\hline
$     _n(K_6 - 1^*) $ & = & $      \Int_n(A_{25} - 1) $ & $_n(A_{25} - 1)     $ & = & $ \Int_n(K_6 - 1^*)       $ & $_n(A_{25}^* - 1)     $ & = & $_n(K_6 - 1)_{\eta}        $ \\
$_{2n+1}(K_6 - 2^*) $ & = & $   \Int_{2n}(A_{25} - 2) $ & $_{2n}(A_{25} - 2)  $ & = & $ \Int_{2n-1}(A_{25} - 3) $ & $_{2n}(A_{25}^* - 2)  $ & = & $_{2n}(A_{25} - 2)^*       $ \\
$  _{2n+1}(K_6 - 3) $ & = & $   \Int_{2n}(A_{25} - 5) $ & $_{2n+1}(A_{25} - 3)$ & = & $ \Int_{2n+1}(K_6 - 2^*)  $ & $_{2n+1}(A_{25}^* - 3)$ & = & $_{2n+1}(K_6 - 2)_{\eta}   $ \\
$    _{2n}(K_6 - 4) $ & = & $ \Int_{2n-1}(A_{25} - 4) $ & $_{2n+1}(A_{25} - 4)$ & = & $ \Int_{2n+1}(K_6 - 3)    $ & $_{2n+1}(A_{25}^* - 4)$ & = & $_{2n+1}(K_6 - 3^*)_{\eta} $ \\
                      &   &                              & $_{2n}(A_{25} - 5)  $ & = & $ \Int_{2n}(K_6 - 4)      $ & $_{2n}(A_{25}^* - 5)  $ & = & $_{2n}(K_6 - 4)_{\eta}     $ \\ \hline
\end{tabular}   }
\caption{Classification of indecomposable corepresentations of $A_{25}$, $A_{25}^*$ and part of $K_6$. Initial corepresentations are $_1(A_{25} - 1) = (F_{14}-A)$, $_2(A_{25} - 2) = (F_{14}-B)$ and $_1(A_{25} - 4) = (F_{14}-B)$. Indecomposable corepresentations $(K_6 - i) = (K_6 - i^*)^*$ for $i = 1,\ 2$ are obtained by duality, remaining corepresentations are self-dual, it means that $(K_6 - i) = (K_6 - i^*)$, for $i= 3, 4$ . In all cases $n\ge 1$.}\label{List of coreps of A25}
\end{table}

Notice that $ Int\ _n(K_6 - i)$ coincides with the extended corepresentation $_n(K_6 - i)^{\eta}$ for $i \in \{ 1^*, 2^*, 3, 4 \}$, see an example in table \ref{Example of corepresentation extended}, and $(A_{25} - i)^* \not= (A_{25}^* - i)$ for $i\not= 2$, for those cases apply lemma \ref{dualsincere}.

Each of the posets $A_{28}= \{ a \lhd  \zeta ; \: b  \lhd \eta \}$, $A_{29} = \{a ;\: b  \lhd  \eta  \lhd  \theta\}$, $A_{34} = \{ a ; \: \eta  \lhd  b  \lhd  \theta \}$ and $A_{45} = \{ a  \lhd  \zeta  \rhd  \eta  \lhd  b \}$ where $a$ and $b$ are the only weak points, has a $\seven$-suitable pair for differentiation $\seven$, (see section 4 in \cite{Rod-Zav-07}) then by integrating all indecomposable corepresentations of the respective derived sets we obtain its classification. For the poset $A_{33}=\{ a \lhd \zeta ; \: \eta \lhd  b \}$, we apply $(\zeta , \eta)$-\textbf{completion} (for ordinary posets), in the sense of \cite{Zav-05-TP}, i.e. we add the relation $\zeta \rhd  \eta$, so that, $A_{33}$ and $A_{45}$ have the same type of sincere indecomposable corepresentations, see lemma 5.2 in \cite{Zav-05-TP}. Notice that each of the posets $A_{33}, \, A_{34}, \, A_{45}$ is self-dual. it follows the following proposition.

\begin{prop}\label{Classification of A28, A29, A33, A34 and A45}
Each of the posets $A_{28} $, $A_{29}$, $A_{33}$, $A_{34}$, $A_{45}$ and their dual posets has 1 type of indecomposable corepresentations sincere at all its strong points having the following forms:
\end{prop}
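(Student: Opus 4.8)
The plan is to reduce each poset to a previously-classified coproblem by the differentiation algorithm $\seven$ and then integrate back, handling $A_{33}$ by a preliminary completion and the dual posets by the duality principle.

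First I would treat $A_{28}$, $A_{29}$, $A_{34}$ and $A_{45}$ uniformly. In each of these $a$ and $b$ are the only weak points, and one checks from the criterion of Section~4 of \cite{Rod-Zav-07} that the poset carries a $\seven$-suitable pair. Applying differentiation $\seven$ to this pair yields a derived poset $\p'$ whose indecomposable corepresentations are already available, either from the finite list of Appendix~\ref{finitetypeposets} or from the $K_6$ and $A_{25}$ classification recorded in Table~\ref{List of coreps of A25}. Integrating $\seven$ every indecomposable of the (extended) derived poset, in the notation ${}_{d_0}(\p - m) = \Int_{d_0'}(\q - n)$, produces a complete list of indecomposables of $\p$; inspecting the dimension vectors then shows that exactly one of them is sincere at all the strong points, which is the single type asserted, and its explicit matrix form is the one displayed below.

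Next, $A_{33} = \{ a \lhd \zeta; \: \eta \lhd b \}$ is treated by $(\zeta,\eta)$-completion: adjoining the relation $\zeta \rhd \eta$ turns $A_{33}$ into $A_{45}$, and by Lemma~5.2 of \cite{Zav-05-TP} this operation preserves the sincere indecomposable corepresentations, so $A_{33}$ carries exactly the sincere type already found for $A_{45}$.

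Finally, for the dual posets I would note that $A_{33}$, $A_{34}$ and $A_{45}$ are self-dual, so their classification is automatically symmetric, while for $A_{28}$ and $A_{29}$ the duality principle of Proposition~\ref{duality principle} together with Lemma~\ref{dualsincere} transports the classification verbatim via $U \mapsto U^*$. I expect the main obstacle to be the bookkeeping in the integration step: one must verify, by tracking how the strong coordinates grow under each successive integration, that among all the indecomposables produced from the derived list precisely one becomes sincere at every strong point, and that sincerity forces the unique minimal dimension vector recorded in the statement.
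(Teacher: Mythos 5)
Your proposal follows essentially the same route as the paper: the paper likewise classifies $A_{28}$, $A_{29}$, $A_{34}$ and $A_{45}$ by exhibiting a $\seven$-suitable pair (citing Section 4 of \cite{Rod-Zav-07}) and integrating all indecomposables of the derived posets, handles $A_{33}$ by $(\zeta,\eta)$-completion so that it shares its sincere indecomposables with $A_{45}$ via Lemma 5.2 of \cite{Zav-05-TP}, and disposes of the duals by noting that $A_{33}$, $A_{34}$, $A_{45}$ are self-dual while the remaining cases follow by duality (Lemma~\ref{dualsincere}). The only cosmetic difference is that the paper records the resulting single sincere type in the compact extended-corepresentation notation $_{2n-1}(A_{25}-3)^{\zeta}$, $_{2n}(A_{25}-2)^{\theta}$, etc., rather than in integration notation, but this is the same classification you describe.
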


\begin{table}[htbp]
\centering
\renewcommand{\arraystretch}{1.2}
\renewcommand{\tabcolsep}{1.5mm}
\setlength{\doublerulesep}{0.14pt}
\setlength{\arrayrulewidth}{0.2pt}
\begin{tabular}{|rcl|rcl|}\hline
$_{2n-1}(A_{28})$ & = &  $ _{2n-1}(A_{25} - 3)^{\zeta}  $ & $ _{2n-1}(A_{28}^*)$ & =  & $_{2n-1}(A_{25}^* - 3)_{\zeta}$ \\
$_{2n}(A_{29})  $ & = &  $ _{2n}(A_{25} - 2)^{\theta}   $ & $ _{2n}(A_{29}^*)  $ & =  & $_{2n}(A_{25}^* - 2)_{\theta} $ \\
$_{2n-1}(A_{33})$ & = &  $ _{2n-1}(A^*_{25} - 4)^{\zeta}$ & $ _{2n-1}(A_{45})  $ & =  & $_{2n-1}(A^*_{25} - 4)^{\zeta}$ \\
$_{2n}(A_{34})  $ & = &  $ _{2n}(A^*_{25} - 5)^{\theta} $ & $                  $ &    &                                 \\ \hline
\end{tabular}
\caption{Classification of indecomposable corepresentations of the posets $A_{28}$, $A_{29}$, $A_{33}$, $A_{34}$, and their duals.}\label{List of coreps of A28, A29, A33, A34 and 45}
\end{table}

The classifications of $A_{38} = \{ a \prec q \succ b \}$ and $A_{39}=\{ a \prec  q  \succ  b  \lhd  \theta \}$ where $\theta$ is the only strong point, are obtained in the same way than classifications of $K_6$ and $A_{25}$. Let $U$ be an indecomposable corepresentation of $A_{38}$. There are considered two cases. $\US{U}_a \not= U_0$ or $\US{U}_b \not= U_0$. Suppose $\US{U}_b \not= U_0$ (other case is analogous), then $U$ may be extended to a corepresentation $V$ of $A_{39}$ which is not sincere at $\theta$, namely $V_0 = U_0$, $V_x = U_x$ for $x\not= \theta$ and $V_{\theta} = \US{U}_b$, since $A_{39}$ has a $\seven $-suitable pair $(a, \theta)$ then $U$ is obtained by integration of indecomposable corepresentations of $A_{39}'$, see tables \ref {List of coreps of A38} and \ref{List of coreps of A39}. Second case, $\US{U}_a = U_0$ and $\US{U}_b = U_0$, then by special reduction methods that involves corepresentations of $K_6$ satisfying that condition, it follows that $U =\ _n(K_6 - 5)^{q}$. Classification of indecomposable corepresentations of $A_{38}$ was obtained in \cite{Rod-10} by other special reductions which lead to a matrix problem with respect to representations of 2-equipped posets. For $A_{38}^* = \{ a  \succ  q  \prec  b \}$ and $A_{39}^*=\{ a  \succ  q  \prec  b  \rhd  \theta \}$ is used duality and lemma \ref{dualsincere}.

\begin{prop}\label{Classification of A38}
Each of the posets $A_{38}$ and $A_{38}^* $ has 4 types of indecomposable corepresentations sincere at the point $q$ (up to automorphisms of equipped posets), having the following matrix form
\begin{table}[hbtp]
\centering
\renewcommand{\arraystretch}{1.2}
\renewcommand{\tabcolsep}{1.5mm}
\setlength{\doublerulesep}{0.14pt}
\setlength{\arrayrulewidth}{0.2pt}
\scalebox{.75}{
\begin{tabular}{|rcl|rcl|rcl|}\hline
$_{1}(A_{38} - 1)            $ & = & $ (F_{13}-A)  $ &$        _  {2n+1}(A_{38} - 1)$ & = & $ \Int_{2n}(A_{39} - 4)$ & $_{n}(A^*_{38} - 1)   $  & = & $ _{n}(K_6 - \widetilde{1})_q $\\
$_{1}(A_{38} - \widetilde{2})$ & = & $ (F_{17})    $ &$   _{n+1}(A_{38} - \tilde{2})$ & = & $ \Int_{n}(A_{39} - 2)$ & $_{n}(A^*_{38} - 2)   $  & = & $ _{n}(K_6 - \widetilde{1})_q $\\
$_{1}(A_{38} - 3)            $ & = & $ (K_6 - 5)^q $ &$             _{n}(A_{38} - 3)$ & = & $        _{n}(K_6 - 5)^q$ & $_{n}(A^*_{38} - 3)   $  & = & $ _{n}(K_6 -5)_q              $\\
$_{1}(A_{38} - 4)            $ & = & $ (F_{13}-B)  $ &$          _{2n+1}(A_{38} - 4)$ & = & $ \Int_{2n}(A_{39} - 7)$ & $_{2n-1}(A^*_{38} - 4)$  & = & $ _{2n-1}(K_6 - 2)_q          $\\ \hline
\end{tabular}  }
\caption{Classification of indecomposable corepresentations of $A_{38}$. In all cases $n\ge 1$.}\label{List of coreps of A38}
\end{table}
\end{prop}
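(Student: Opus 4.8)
The plan is to classify the indecomposable corepresentations $U=(U_0,U_a,U_b,U_q)$ of $A_{38}=\{a\prec q\succ b\}$ by a dichotomy on the two minimal weak points. Since $q$ is the unique maximal point and the relations $a\prec q$, $b\prec q$ are weak, the defining conditions \eqref{Cond. for Corep.} reduce to the $\F$-space inclusions $U_a,U_b\subseteq U_q\subseteq U_0$, so an indecomposable is governed by the relative position of $U_a$ and $U_b$ inside $U_0$. The decisive invariants are the $\G$-hulls $\US{U}_a$ and $\US{U}_b$, and I would split into case (i), where at least one hull is proper, and case (ii), where both hulls fill $U_0$. The automorphism of $A_{38}$ interchanging $a$ and $b$ lets me assume $\US{U}_b\neq U_0$ throughout case (i); this symmetry is exactly what accounts for the clause ``up to automorphisms of equipped posets'' in the statement.

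In case (i) I would embed $U$ into the larger poset $A_{39}=\{a\prec q\succ b\lhd\theta\}$, where $\theta$ is the unique strong point, by keeping $V_0=U_0$ and $V_x=U_x$ for $x\neq\theta$ and setting $V_\theta:=\US{U}_b$. Then the strong condition $b\lhd\theta$ in \eqref{Cond. for Corep.} holds by construction, and since $V_\theta=\underline{V_\theta}$ one has $d_\theta=0$, i.e. $V$ is not sincere at $\theta$; the hypothesis $\US{U}_b\neq U_0$ guarantees $V_\theta\subsetneq V_0$, so that $\theta$ is a genuine proper maximal strong point. Because $A_{39}$ carries a $\seven$-suitable pair $(a,\theta)$ in the sense of Section~4 of \cite{Rod-Zav-07}, the differentiation algorithm D-$\seven$ applies and $U$, via its extension $V$, is recovered by integration $\seven$ from an indecomposable of the derived poset $A_{39}'$ already classified in Table~\ref{List of coreps of A39}. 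Integrating the relevant families yields exactly $_{2n+1}(A_{38}-1)=\Int_{2n}(A_{39}-4)$, $_{n+1}(A_{38}-\widetilde{2})=\Int_{n}(A_{39}-2)$ and $_{2n+1}(A_{38}-4)=\Int_{2n}(A_{39}-7)$, the initial terms being the corepresentations $(F_{13}-A)$, $(F_{17})$ and $(F_{13}-B)$ of the finite-type posets of Appendix~\ref{finitetypeposets}.

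In case (ii), $\US{U}_a=\US{U}_b=U_0$, so forgetting $q$ gives an indecomposable corepresentation of $K_6=\{a;b\}$ with both $\G$-hulls full and both $\G$-cohulls zero; by Theorem~C1 (the Kronecker/matrix-pencil case of Table~\ref{series of coreps of K6}) the only such indecomposable is $_n(K_6-5)$. The special reduction methods of \cite{Rod-10}, which recast this full-hull situation as an ordinary representation problem for $2$-equipped posets, then show that $U$ must be the extension by the maximal point $q$, giving the fourth family $_n(A_{38}-3)={}_n(K_6-5)^q$. Finally, for the dual poset $A_{38}^*=\{a\succ q\prec b\}$ I would invoke the duality principle (Proposition~\ref{duality principle}) together with \lemref{dualsincere}: dualizing each $A_{38}$-family and reinterpreting the extension at the now-minimal point $q$ as the lower extension $(\,\cdot\,)_q$ produces the four families ${}_n(A_{38}^*-i)$ recorded in the right-hand column of Table~\ref{List of coreps of A38}.

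The hardest part will be the bookkeeping in case (i): one must verify that $(a,\theta)$ is genuinely $\seven$-suitable for $A_{39}$, and then track the integration so that precisely the $A_{39}'$-indecomposables lying over the non-sincere-at-$\theta$ locus are used, with no sincere-at-$q$ indecomposable of $A_{38}$ omitted or produced twice. In particular the index shifts (the source dimensions $2n$, $n$, $2n$ in $A_{39}'$ against the targets $2n+1$, $n+1$, $2n+1$ in $A_{38}$), together with the indecomposability and sincerity at $q$ of each integrated corepresentation, have to be checked against the explicit matrices. By comparison case (ii) is routine once the reduction of \cite{Rod-10} is granted, and the dual case is formal given Proposition~\ref{duality principle}.
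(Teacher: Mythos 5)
Your proposal follows the paper's own proof essentially step for step: the same dichotomy on the hulls $\US{U}_a,\US{U}_b$ (with the $a\leftrightarrow b$ automorphism giving the ``up to automorphisms'' clause), the same extension of $U$ to a corepresentation $V$ of $A_{39}$ with $V_\theta=\US{U}_b$ which is not sincere at $\theta$, followed by integration along the $\seven$-suitable pair $(a,\theta)$ against Table~\ref{List of coreps of A39}, the same appeal to the special reduction methods of \cite{Rod-10} in the full-hull case to get $_n(A_{38}-3)={}_n(K_6-5)^q$, and the same use of duality together with Lemma~\ref{dualsincere} for $A_{38}^*$. One small caution on your case (ii): the restriction $U|_{K_6}$ need not be indecomposable, and the full-hull, zero-cohull indecomposables of $K_6$ are not only $_n(K_6-5)$ but also the series $_n(K_6-6)(X)$; this does not damage your argument, since (like the paper) you ultimately delegate this case to the reductions of \cite{Rod-10}, which exclude the series members because their $K_6$-part already has $U_a+U_b=U_0$, leaving no room for $d_q>0$.
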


\begin{prop}\label{Classification of A39}
Each of the posets $A_{39}$ and $A^*_{39}$ has 10 types of indecomposable corepresentations sincere at the points $q$ and $\theta$, having the following matrix form
\begin{table}[hbtp]
\centering
\renewcommand{\arraystretch}{1.2}
\renewcommand{\tabcolsep}{1.5mm}
\setlength{\doublerulesep}{0.14pt}
\setlength{\arrayrulewidth}{0.2pt}
\begin{tabular}{|rcl|rcl|}\hline
$_2(A_{39}-1) $ & = & $(F_{15} - A)$  & $  _{n+1}(A_{39}-1) $ & = &  $  \Int  _n(A_{39} - 1)            $ \\
$_1(A_{39}-2) $ & = & $(F_{15} - B)$  & $  _n(A_{39}-2)     $ & = &  $  \Int  _n(A_{38} - \widetilde{2})$ \\
$_1(A_{39}-3) $ & = & $(F_{14} - A)$  & $  _n(A_{39}-3)     $ & = &  $  \Int  _n(A_{38} - 1)            $ \\
$_2(A_{39}-4) $ & = & $(F_{15} - C)$  & $  _{n+1}(A_{39}-4) $ & = &  $  \Int  _n(A_{39} - 3)            $ \\
$_2(A_{39}-5) $ & = & $(F_{15} - D)$  & $  _{n+1}(A_{39}-5) $ & = &  $  \Int  _n(A_{39} - 5)            $ \\
$_1(A_{39}-6) $ & = & $(F_{14} - B)$  & $  _{2n-1}(A_{39}-6)$ & = &  $  \Int  _{2n-1}(A_{38} - 4)       $ \\
$_2(A_{39}-7) $ & = & $(F_{15} - E)$  & $ _{2n}(A_{39}-7)   $ & = &  $  \Int  _{2n-1}(A_{39} - 6)       $ \\
$_2(A_{39}-8) $ & = & $(F_{14} - C)$  & $ _{2n}(A_{39}-8)   $ & = &  $  \Int  _{2n-1}(A_{39} - 10)      $ \\
$_3(A_{39}-9) $ & = & $(F_{15} - G)$  & $ _{2n+1}(A_{39}-9) $ & = &  $  \Int  _{2n}(A_{39} - 8)         $ \\
$_3(A_{39}-10)$ & = & $(F_{15} - F)$  & $ _{2n+1}(A_{39}-10)$ & = &  $  \Int  _{2n-1}(A_{39} - 9)       $ \\ \hline
\end{tabular}
\caption{Classification of indecomposable corepresentations of $A_{39}$. In all cases $n\ge 2$.}\label{List of coreps of A39}
\end{table}
\end{prop}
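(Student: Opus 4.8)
The plan is to classify the indecomposable corepresentations of $A_{39}$ sincere at $q$ and $\theta$ by means of the differentiation algorithm $\D7$ with respect to the $\seven$-suitable pair $(a,\theta)$, following exactly the scheme already used for $A_{38}$ and for the classifications of $K_6$ and $A_{25}$. First I would verify that $(a,\theta)$ is $\seven$-suitable in the sense of \cite{Rod-Zav-07}: here $a$ is a minimal weak point, $\theta$ is the unique strong point with $b\lhd\theta$, and one checks the incidence conditions demanded by the algorithm. Granting this, $\D7$ together with its inverse, integration $\seven$, yields a bijection between the indecomposable corepresentations of $A_{39}$ and those of the derived poset $A_{39}'$, up to a finite set of exceptional objects not reached by integration.

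The second step is to identify $A_{39}'$ and its relevant indecomposables. Deleting $\theta$ from $A_{39}$ recovers $A_{38}=\{a\prec q\succ b\}$, so every indecomposable of $A_{39}$ with $d_\theta=0$ essentially reduces, via the extension construction, to a corepresentation of $A_{38}$ already classified in Proposition \ref{Classification of A38}; the genuinely new indecomposables are therefore precisely those sincere at $\theta$. Table \ref{List of coreps of A39} exhibits these ten families through a mutual recursion: several are produced by integrating corepresentations of $A_{38}$, for instance ${}_{n}(A_{39}-2)=\Int{}_{n}(A_{38}-\widetilde{2})$, ${}_{n}(A_{39}-3)=\Int{}_{n}(A_{38}-1)$ and ${}_{2n-1}(A_{39}-6)=\Int{}_{2n-1}(A_{38}-4)$, while the rest arise by iterated self-integration inside $A_{39}$, for instance ${}_{n+1}(A_{39}-1)=\Int{}_{n}(A_{39}-1)$ and the intertwined cycle ${}_{2n}(A_{39}-8)=\Int{}_{2n-1}(A_{39}-10)$, ${}_{2n+1}(A_{39}-9)=\Int{}_{2n}(A_{39}-8)$, ${}_{2n+1}(A_{39}-10)=\Int{}_{2n-1}(A_{39}-9)$. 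I would then run the argument as an induction on $d_0$, well founded because each self-integration strictly raises $d_0$ while the cross-references fall back on the already-known classification of $A_{38}$; the base cases are the explicitly displayed seed matrices $(F_{15}-A),\dots,(F_{15}-G)$ and $(F_{14}-A),(F_{14}-B),(F_{14}-C)$, namely the low-dimensional indecomposables outside the image of integration.

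Having produced the ten families, I would check that the list is exhaustive and irredundant. Completeness follows because every indecomposable of $A_{39}$ sincere at $\theta$ is either a seed or, by the $\D7$--integration bijection, arises from a strictly smaller indecomposable; non-isomorphism follows since integration $\seven$ preserves indecomposability and distinctness, and the seeds are pairwise distinct by inspection of their dimension vectors. Each family is sincere at $q$ and $\theta$ by construction. Finally the classification of $A_{39}^*=\{a\succ q\prec b\rhd\theta\}$ is obtained from that of $A_{39}$ through the duality principle (Proposition \ref{duality principle}) together with Lemma \ref{dualsincere}, which carries the sincere indecomposables of $A_{39}$ to those of $A_{39}^*$ in the dual dimension.

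The step I expect to be the main obstacle is the second one: pinning down the derived poset $A_{39}'$ and, above all, the exact finite set of exceptional seed corepresentations that integration cannot reach. Because the recursion intertwines $A_{38}$ with $A_{39}$, and because families $8$, $9$, $10$ feed one another cyclically, one must track carefully which matrix reductions occur at each differentiation step to be certain that precisely the listed seeds $(F_{15}-\bullet)$ and $(F_{14}-\bullet)$ appear, that none is overlooked, and that no spurious indecomposable is introduced. Confirming indecomposability and pairwise non-isomorphism of these seed matrices directly, and verifying that the integration formulas reproduce exactly the ten stated families with the indicated dimensions, is the delicate bookkeeping that carries the weight of the proof.
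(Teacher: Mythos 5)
Your proposal follows essentially the same route as the paper: the paper likewise classifies $A_{39}$ by differentiation D-$\seven$ with respect to the $\seven$-suitable pair $(a,\theta)$, building Table~\ref{List of coreps of A39} recursively by integration from the finite-type seeds $(F_{14}-\bullet)$, $(F_{15}-\bullet)$ and from the mutually intertwined $A_{38}$/$A_{39}$ families, and then obtains $A_{39}^*$ by duality together with Lemma~\ref{dualsincere} (plus the extension construction $(\cdot)_{\theta}$ for types $2$, $3$, $6$). Your joint induction on $d_0$ correctly handles the mutual recursion between the two tables, which is exactly how the paper's scheme is meant to be read.
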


Indecomposable corepresentations of the poset $A_{39}^*$ are obtained in the following way, $(A_{39^*} - i) = (A_{39} - i)^*$ provided that $i = 1,4,5,8,9,10$, for $(A_{39}^* - 7)$ apply the lemma \ref{dualsincere}, remaining corepresentations are $(A_{39}^* - 2) = (A_{38}^* - 2)_{\theta}$; $(A_{39}^* - 3) = (A_{38}^* - 1)_{\theta}$; $(A_{39}^* - 6) = (A_{38}^* - 4)_{\theta}$.

The classification of indecomposable corepresentations of $A_{41} = \{ p \prec \{a, b\} \prec q  \}$ where all points are weak, was obtained in \cite{Rod-10} by applying special matrix reductions which lead to the problem of classifying representations of $K_8$. Notice that $A_{46} = \{ p \prec \{a, b\} \prec q;\: p\lhd q \}$ is obtained by applying $(p, q)$-completion to $A_{41}$ (see \cite{Rod-Zav-07}), i.e. by strengthening the relation $p\prec q$, thus $A_{41}$ and $A_{46}$ have the same indecomposable corepresentations except for a trivial one.

\begin{prop}\label{Classification of A41 and A46}
Each of the posets $A_{41}$, and $A_{46}$ has 1 type, up to automorphism of 2-equipped posets, of indecomposable corepresentations sincere at points $p$ and $q$ having the following matrix forms
\begin{align*}
_{n}(A_{41}) =\, _{n}(A_{46}) =\,  _{n}(K_6 - 5)_p^q
\end{align*} for $n\ge 2$
\end{prop}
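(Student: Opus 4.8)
The plan is to classify the indecomposable corepresentations of $A_{41}$ that are sincere at $p$ and $q$ by restricting to the critical subposet $K_6=\{a,b\}$, pinning down the unique sincere type there, and then transferring everything to $A_{46}$ through the $(p,q)$-completion. First I would take a corepresentation $U$ of $A_{41}$ sincere at $p$ and $q$ and look at its restriction to $K_6$, recording the extra data $U_p\subset U_a\cap U_b$ and $U_q\supset U_a+U_b$. Because $p$ is a minimal weak point, $\underline{U_p}=0$ and $d_p=\dim_\F U_p$, so sincerity at $p$ forces $U_a\cap U_b\neq 0$; dually, $q$ is maximal and weak with $\underline{U_q}=U_a+U_b$ and $d_q=\dim_\F U_q/(U_a+U_b)$, so sincerity at $q$ forces $U_a+U_b\subsetneq U_0$. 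These two constraints are what single out the relevant $K_6$-behaviour.

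Next I would invoke the special matrix reductions of \cite{Rod-10}, which identify the coproblem for $A_{41}$ sincere at $p$ and $q$ with the representation problem of $K_8$ and show that there is exactly one sincere type. It then suffices to exhibit a single sincere indecomposable family and appeal to this uniqueness. I would take the extension ${}_n(K_6-5)_p^q$ of the Kronecker-type indecomposable ${}_n(K_6-5)=[\,I_n\mid I_n+\xi J_n^+(0)\,]$. A direct computation gives $U_a\cap U_b=\F e_1$, so $U_p=U_a\cap U_b$ yields $d_p=1$, and $U_a+U_b=\F^n\oplus\xi\,\mathrm{Im}(J_n^+(0))$ has $\F$-codimension $1$ in $U_0$, so $U_q=U_0$ yields $d_q=1$; the dimension vector is therefore $(n;\,1,n,n,1)$, which is sincere for $n\ge 2$.

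Indecomposability follows because any endomorphism of the extension preserves $U_a$ and $U_b$ and hence restricts to an element of $\mathrm{End}_{K_6}({}_n(K_6-5))$; the latter has only the idempotents $0$ and $1$ since ${}_n(K_6-5)$ is indecomposable, and a subring containing the identity inherits this property, so ${}_n(K_6-5)_p^q$ is indecomposable. Since $U_q=U_0$ is the whole space, the choice of $V_q$ is immaterial up to isomorphism, so the subscript/superscript notation is well defined. As the members of the family ${}_n(K_6-5)_p^q$ have pairwise distinct dimensions, they are non-isomorphic; by the uniqueness from the previous step they exhaust the one and only type of indecomposable corepresentation of $A_{41}$ sincere at $p$ and $q$.

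Finally, for $A_{46}$ I would use that it is the $(p,q)$-completion of $A_{41}$: strengthening $p\prec q$ to $p\lhd q$ replaces the condition $U_p\subset U_q$ by $\US{U}_p\subset U_q$, so $\corep A_{46}$ is the full subcategory of $\corep A_{41}$ cut out by the stronger condition. Since ${}_n(K_6-5)_p^q$ has $U_q=U_0$, we have $\US{U}_p\subset U_0=U_q$ automatically, so this corepresentation already lies in $\corep A_{46}$; conversely every indecomposable of $A_{46}$ sincere at $p$ and $q$ is one of $A_{41}$, hence of the same type. This gives ${}_n(A_{41})={}_n(A_{46})={}_n(K_6-5)_p^q$ for $n\ge 2$, the trivial case $n=1$ being the only discrepancy between the two lists. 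The main obstacle is the matrix reduction of \cite{Rod-10} identifying the $A_{41}$-coproblem with $K_8$-representations, which is what actually establishes uniqueness of the sincere type; granting that, the explicit form, the indecomposability, and the passage to $A_{46}$ are routine verifications.
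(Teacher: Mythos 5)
Correct, and essentially the paper's own argument: like the paper, you delegate the uniqueness of the sincere type to the special matrix reductions of \cite{Rod-10} (which turn the $A_{41}$ coproblem into the problem of classifying representations of $K_8$) and you pass to $A_{46}$ via $(p,q)$-completion, merely adding the routine explicit verification of the family ${}_n(K_6-5)_p^q$, which the paper leaves implicit. One small slip: in the paper's conventions $d_a=\dim_\F U_a/\underline{U_a}$ with $\underline{U_a}=U_p$, so the dimension vector of ${}_n(K_6-5)_p^q$ is $(n;1,n-1,n-1,1)$ rather than $(n;1,n,n,1)$ --- which is exactly why sincerity at $a$ and $b$ fails for $n=1$ and the restriction $n\ge 2$ is needed, in agreement with the vectors listed in Appendix~\ref{sincereK6posets}.
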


\begin{prop}\label{A42coreps}
Each of the posets $A_{42} = \{ \{ a ,b \} \prec q \prec r \}$ and $A^*_{42} = \{ r \succ q \succ \{ a ,b \} \}$ has 1 type, up to automorphism of equipped posets, of indecomposable corepresentations sincere at the point $d$, having the following matrix forms
\begin{align*}
_{1}(A_{42}) &= F_{17} & _{n}(A_{42}) &= \, _{n}(A_{38} - 1)^{r} \\
_{1}(A_{42})^* &= F_{17} & \, _{n}(A_{42}^*) &= \, _{n}(A_{42}-0)^*
\end{align*}where $_{n}(A_{42} - 0) := \, _{n}(A_{38} - 1)$ with $\supp n(A_{42}-0) = \{ a, b, q \} $ and $n\ge 2$
\end{prop}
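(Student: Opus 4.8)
The plan is to classify the sincere indecomposables of $A_{42}$ by exploiting that deleting the maximal point $r$ leaves exactly the full subposet $\{a,b,q\}=A_{38}$, whose indecomposables are already described in Proposition~\ref{Classification of A38}. So I would regard any corepresentation $U$ of $A_{42}$ as a corepresentation $U'$ of $A_{38}$ (its restriction, $U'_x=U_x$ for $x\in\{a,b,q\}$) carrying one extra $\F$-subspace $U_r$ subject to $U_q\subset U_r\subset U_0$, and run the whole analysis on the position of this flag. Note first that $q\prec r$ forces $r$ to be \emph{weak}: were $r$ strong, condition (\ref{New def. of equipped poset}) applied to $q\le r\lhd r$ would give $q\lhd r$, contradicting $q\prec r$.

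Next I would split into the two cases dictated by the radical $\underline{U_r}=U_a+U_b+U_q=U_q$. If $U_r=U_q$ then $d_r=0$, $U$ is supported on $\{a,b,q\}$, and is nothing but an $A_{38}$-indecomposable viewed inside $A_{42}$; this is the family I record as ${}_n(A_{42}-0):={}_n(A_{38}-1)$ with $\supp U=\{a,b,q\}$. If instead $U_q\subsetneq U_r$, then $U$ is sincere at $r$, and using maximality of $r$ together with indecomposability I would argue that $U_r=U_0$ and that $U$ is the extension $(U')^{r}$ of a $q$-sincere $A_{38}$-indecomposable $U'$. This extension is the construction $U^{\zeta}$ with $\zeta=r$: one adjoins to $U_q$ a single $\F$-dimension and checks, via the $\G$-hull $\US{U}_q$, that the resulting $V$ is independent of the chosen line up to isomorphism, which is precisely where the condition $\US{U}_q\ne U_0$ enters.

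The decisive step is to run through the four families of Proposition~\ref{Classification of A38} and decide which satisfy $\US{U}_q\subsetneq U_0$, so that extension through $r$ yields a genuinely new sincere indecomposable. I expect the computation to single out ${}_n(A_{38}-1)$: for ${}_n(A_{38}-3)={}_n(K_6-5)^{q}$ one has $\US{U}_a=U_0$ and hence $\US{U}_q=U_0$, so no proper extension exists, while the families $\widetilde{2}$ and $4$ either have full hull or collapse to ${}_n(A_{38}-1)$ under the automorphism $a\leftrightarrow b$ of $A_{42}$. Thus for $n\ge 2$ one obtains the single type ${}_n(A_{42})={}_n(A_{38}-1)^{r}$, while the base corepresentation (with $\dim_\G U_0=1$) is the trivial $F_{17}$, giving ${}_1(A_{42})=F_{17}$.

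Finally, $A_{42}^*$ would be handled purely by duality. Since $r$ is \emph{minimal} in $A_{42}^*$, Proposition~\ref{duality principle} together with Lemma~\ref{dualsincere} sends the $r$-nonsincere family ${}_n(A_{42}-0)$ of $A_{42}$ to an $r$-sincere family of $A_{42}^*$, yielding ${}_n(A_{42}^*)={}_n(A_{42}-0)^{*}$ and the self-dual trivial ${}_1(A_{42})^*=F_{17}$. The main obstacle is the third step: verifying simultaneously that exactly one $A_{38}$-family extends and that the extension is unique (indecomposable and independent of the choice of $V_r$). Since the construction $U^{\zeta}$ was stated for a \emph{strong} maximal point and here $r$ is weak, the hull argument guaranteeing well-definedness must be carried out directly rather than quoted.
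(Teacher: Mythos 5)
There is a genuine gap at the heart of your argument: in the case $U_q\subsetneq U_r$ you assert that indecomposability of $U$ lets you write $U=(U')^{r}$ for a \emph{$q$-sincere $A_{38}$-indecomposable} $U'$, i.e.\ you tacitly assume that the restriction $U|_{A_{38}}$ of an indecomposable corepresentation of $A_{42}$ is again indecomposable. This is false in general and is precisely the difficulty the statement hides. The subspace $U_r$ can glue a whole direct sum $V=\bigoplus_i V^i$ of $A_{38}$-indecomposables into a single indecomposable corepresentation of $A_{42}$, so one must also rule out sincere indecomposables arising from multi-summand restrictions. The paper's proof is organized exactly around this: it first uses $U_a+U_b\neq U_0$ (via Theorem 3.1 of \cite{Rod-10}) to force $\UI{U}_a=\UI{U}_b=0$ and $\codim_{\F}V^i_q\neq 0$, so each summand $V^i$ is of type $(A_{38}-1)$, $(K_6-1^*)$, $(K_6-\widetilde{1}^*)$, $(K_6-2^*)$ or $(K_6-5)$; then, after putting the summands in canonical form, the residual matrix problem on the vertical stripe $r$ is identified with the problem of classifying indecomposable representations of the garland (\ref{guirnalda orden}) (see Figure \ref{guirnalda} and the blocks (\ref{classification of reps of a garland})), and only an exhaustive check of that garland problem shows that no sincere indecomposable is supported on more than one summand, leaving $_{n}(A_{42})=\,_{n}(A_{38}-1)^{r}$ as the unique sincere type. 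Your ``decisive step'' --- deciding which of the four families of Proposition \ref{Classification of A38} extend through $r$ --- only treats the sub-case of an indecomposable restriction, so it cannot establish uniqueness.

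Two smaller points. First, your claim that maximality of $r$ forces $U_r=U_0$ is neither justified nor needed (the sincere indecomposables have $d_r=1$, i.e.\ $U_r$ is $U_q$ plus one $\F$-line modulo the radical, and $U_r=U_0$ is not what the extension construction produces). Second, you correctly observe that the extension $U^{\zeta}$ was defined in the paper only for a \emph{strong} maximal point, whereas $r$ is weak, so the well-definedness of $_{n}(A_{38}-1)^{r}$ needs a direct hull argument; flagging this is good, but it remains an unfilled obligation in your outline. The duality treatment of $A_{42}^*$ via Lemma \ref{dualsincere} matches the paper and is fine.
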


\begin{proof}
Let $U$ be a sincere indecomposable corepresentation of $A_{42}$, consider the restriction $V = U|_{A_{38}}$ where $A_{38} = \{ a \prec q \succ b \}$ and decompose it in a direct sum $V = \bigoplus_i V^i$. Since $U_a + U_b \not= U_0$, it follows from Teorema 3.1 \cite{Rod-10} that $\UI{U}_a = \UI{U}_b = 0$ and also $\UI{V}_a = \UI{V}_b = 0$, therefore each summand satisfies the condition $\UI{V}_a^i = \UI{V}_b^i = 0$, further $\codim_{\F} V_q^i \not= 0$, otherwise $V^i$ would be a direct summand of $U$. So, either each summand $V^i$ is sincere at the point $q$ and then is of $(A_{38} - 1)$ type or each summand is not sincere at $q$ and then is of the form $(K_6 - 1^*), \ (K_6 - \overline{1^*}), \ (K_6 - 2^*)$ or $(K_6 - 5)$.

Consider the matrix realization $M = M_U$, and reduce the vertical stripes corresponding to the points $a$, $b$, $q$ as a direct sum of powers of indecomposable corepresentations as mentioned above, grouped according to its dimension. On the vertical stripe $r$ is obtained a matrix problem concerning about classifying indecomposable representations of a garland of the form (\ref{guirnalda orden}).
\begin{equation}\label{guirnalda orden}
\begin{split}
&a_0 < b_0 < \{ c_0, \tilde{c}_0 \} < b_1 < a_1 < \cdots  \\ &< a_n< b_{2n} <\{ c_n, \tilde{c}_n\} < b_{2n+1} < a_{n+1} < d_n <  \cdots < d_1
\end{split}
\end{equation}

More precisely, the matrix blocks in the stripes of the strong points\footnote{In the sense of representations of 2-equipped posets, see \cite{Zab-Zav-99}, \cite{Zav-03-TEP}}(weak points) $b_i$'s, $c_j$'s, $\tilde{c}_k$'s, $d_l$'s ($a_m$'s) correspond to the corepresentations $_{i+1}(A_{38}-1)^{n_i}$, $_{i+1}(K_6 - 1^*)^{n_j}$, $_{i+1}(K_6 - \tilde{1}^*)^{n_k}$ and $_{i}(K_6 - 5)^{n_l}$ ($_{2i + 1}(K_6-2^*)^{n_m}$) respectively. An example is given in Figure \ref{guirnalda} below.

Then, the problem to classify indecomposable corepresentations of $A_{42}$ is reduced to classify indecomposable representations of a garland (\ref{guirnalda orden}), which is given by the matrix blocks
\begin{equation}\label{classification of reps of a garland}
\renewcommand{\arraystretch}{1.05}\begin{tabular}[c]{|c|}\multicolumn{1}{c}{$x$} \\
\hline 1 \\ \hline \multicolumn{1}{c}{} \\   \end{tabular}\ , \quad
 \begin{tabular}[c]{|c|} \multicolumn{1}{c}{$a_i$} \\
\hline
1 \\
$\xi$ \\ \hline \multicolumn{1}{c}{} \\
\end{tabular}\ ,  \quad
\begin{tabular}[c]{|c|c|} \multicolumn{1}{c}{$c_i$} & \multicolumn{1}{c}{$\tilde{c}_i$} \\
\hline
1 & 1 \\ \hline \multicolumn{1}{c}{} \\
\end{tabular}
\end{equation}
where $x$ is any of the points $a_i$, $b_i$, $c_i$, $\tilde{c}_i$ or $d_i$. By checking exhaustively all corepresentations it is obtained that $_{n}(A_{42}) =\, _{n}(A_{38} - 1)^{r}$ is the only one which is sincere at any point of $A_{42}$. The classification of indecomposable corepresentations of $A_{42}^*$ is obtained by duality, see lemma \ref{dualsincere}.
\end{proof}

\begin{figure}[hbtp]
\begin{picture}(420,250)
\put(9,0){\renewcommand{\arraystretch}{1.1}
\renewcommand{\tabcolsep}{1.45mm}
\setlength{\doublerulesep}{0.14pt}
\setlength{\arrayrulewidth}{0.2pt} \small\normalsize
\begin{tabular}[b]{||ccccccccc||ccccccccc||ccc||c||}
\mc{9}{c}{$a$}&\mc{9}{c}{$b$}&\mc{3}{c}{$q$}&\mc{1}{c}{$r$} \\ \hline
I&\mc{1}{|c}{}&&&&&&&&I&\mc{1}{|c}{}&&&&&&&&&&&$\xi\F$ \\ \cline{1-3} \cline{10-12} \cline{22-22}
&\mc{1}{|c}{I}&\mc{1}{c|}{$0$}&&&&&&&&\mc{1}{|c}{I}&\mc{1}{c|}{$\xi$I}&&&&&&&&&&$0$ \\
&\mc{1}{|c}{$0$}&\mc{1}{c|}{I}&&&&&&&&\mc{1}{|c}{$0$}&\mc{1}{c|}{I}&&&&&&&&&&$\xi\F$ \\ \cline{2-5}  \cline{11-14} \cline{19-19} \cline{22-22}
&&&\mc{1}{|c}{I}&\mc{1}{c|}{$0$}&&&&&&&&\mc{1}{|c}{I}&\mc{1}{c|}{$0$}&&&&&\mc{1}{c|}{$0$}&&&$\xi\F$ \\
&&&\mc{1}{|c}{$0$}&\mc{1}{c|}{I}&&&&&&&&\mc{1}{|c}{$0$}&\mc{1}{c|}{I}&&&&&\mc{1}{c|}{$\xi$I}&&&$0$ \\
&&&\mc{1}{|c}{$0$}&\mc{1}{c|}{$0$}&&&&&&&&\mc{1}{|c}{$\xi$I}&\mc{1}{c|}{I}&&&&&\mc{1}{c|}{$0$}&&&$0$ \\ \cline{4-7} \cline{13-16} \cline{19-19} \cline{22-22}
&&&&\mc{1}{c|}{}&I&$0$&\mc{1}{|c}{}&&&&&&\mc{1}{c|}{}&I&$0$&\mc{1}{|c}{}&&&&&$\xi\F$ \\
&&&&\mc{1}{c|}{}&$0$&\mc{1}{c}{I}&\mc{1}{|c}{}&&&&&&\mc{1}{c|}{}&$0$&I&\mc{1}{|c}{}&&&&&$\xi\F$ \\
&&&&\mc{1}{c|}{}&$0$&$0$&\mc{1}{|c}{}&&&&&&\mc{1}{c|}{}&$\xi$I&I&\mc{1}{|c}{}&&&&&$0$ \\ \cline{6-8} \cline{15-17} \cline{20-20} \cline{22-22}
&&&&&&&\mc{1}{|c|}{I}&&&&&&&&&\mc{1}{|c|}{I}&&&\mc{1}{|c|}{$0$}&&$\xi\F$ \\
&&&&&&&\mc{1}{|c|}{$0$}&&&&&&&&&\mc{1}{|c|}{I}&&&\mc{1}{|c|}{$\xi$I}&&$0$ \\ \cline{8-9} \cline{17-17} \cline{20-20} \cline{22-22}
&&&&&&&\mc{1}{c|}{}&I&&&&&&&&&&&&&$\xi\F$ \\ \cline{9-9} \cline{18-18} \cline{22-22}
&&&&&&&&&&&&&&&&\mc{1}{c|}{}&I&&&&$\xi\F$ \\ \cline{18-18} \cline{21-21} \cline{22-22}
&&&&&&&&&&&&&&&&&&&\mc{1}{c|}{}&I&$\xi\F$ \\  \cline{21-22}
&&&&&&&&&&&&&&&&&&&&&$\G$ \\ \hline
\end{tabular}}
\put(0,229){$d_1$}
\put(0,204){$d_2 \left\{ \begin{array}{c}
\\[15pt]
\end{array}
\right.$}
\put(0,163){$b_2 \left\{ \begin{array}{c}
\\[30pt]
\end{array}
\right.$}
\put(0,116){$a_1 \left\{ \begin{array}{c}
\\[30pt]
\end{array}
\right.$}
\put(0,75){$b_1 \left\{ \begin{array}{c}
\\[15pt]
\end{array}
\right.$}
\put(0,52){$\tilde{c}_0$}
\put(0,37){$c_0$}
\put(0,20){$b_0$}
\put(0,5){$a_0$}
\put(382,229.5){$\circ$}
\put(382,198.5){$\circ$}
\put(382,182.5){$\circ$}
\put(380.35,126.5){$\otimes$}
\put(382,85.5){$\circ$}
\put(398,54){$\circ$}
\put(366,38){$\circ$}
\put(382,22){$\circ$}
\put(380.35,5){$\otimes$}
\put(385,203.5){\line(0,1){27}}
\put(385,187.5){\line(0,1){12}}
\put(385,133.3){\line(0,1){50.2}}
\put(385,90.5){\line(0,1){35.2}}
\put(369.84,43.1){\line(1,3){14.4}}
\put(399.8,58.8){\line(-1,2){13.9}}
\put(386,27){\line(1,2){14}}
\put(383.5,26.5){\line(-1,1){12.9}}
\put(385,11.8){\line(0,1){11.3}}
\end{picture}
\caption{An example of how a problem of classifying indecomposable corepresentations of $A_{42}$ is reduced to a problem of classifying  indecomposable representations of a garland.}\label{guirnalda}
\end{figure}
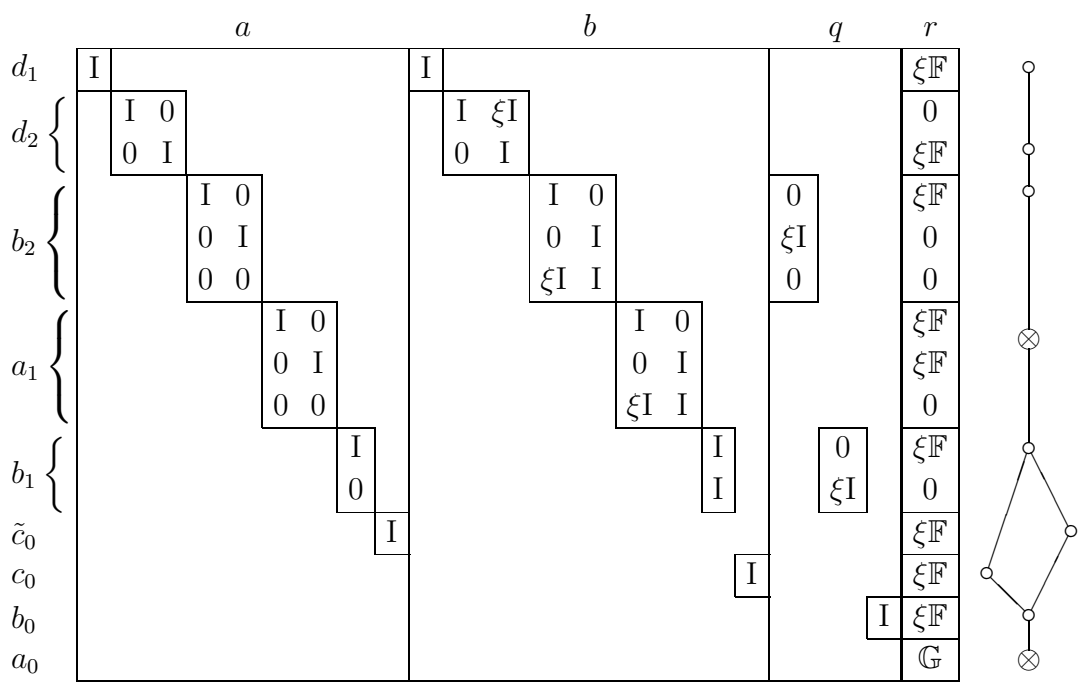

\begin{lemma}\label{properties of Some COREPS}
If $\p $ is one of the sets $K_6$, $A_{25}$, $A_{28}$, $A_{29}$, $A_{38}$, $A_{39}$ or $A_{42}$, then each of indecomposable corepresentation of $\p $ satisfies the following conditions

\begin{enumerate}
\item[(a)] $U_a = U_b = 0 \ \ $ or $\ \ \US{U}_a + \US{U}_b = U_0\: $ for $\: \p \not= A_{39}$

\item[(b)] $U_a = 0 \ \ $ or  $\ \  \begin{cases} \US{U}_a + \US{U}_b = U_0, \ \ \text{for any } \p   \\  \US{U}_a + U_b = U_0, \ \ \text{if } \p \not= A_{38}, A_{39}, A_{42}  \\  \US{U}_a + U_p = U_0, \ \ \text{if } \p = A_{38}, A_{42}  \\  \US{U}_{\theta} + U_p = U_0, \ \ \text{if } \p = A_{39} \end{cases} $
\end{enumerate}
\end{lemma}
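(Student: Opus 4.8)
The plan is to reduce the whole statement to the base poset $K_6$ and then propagate it along the extension and integration constructions that this section uses to build the indecomposables of the remaining posets. For $K_6=\{a;b\}$ itself, part (a) admits a direct splitting argument. Put $W=\US{U}_a+\US{U}_b$, which is a $\G$-subspace of $U_0$ since each hull is. As $a,b$ are the only points and they are incomparable, $U_a\subset W$ and $U_b\subset W$, so any $\G$-complement $W'$ with $U_0=W\oplus W'$ gives a decomposition of corepresentations $U=(W,U_a,U_b)\oplus(W',0,0)$. Indecomposability forces $W'=0$, i.e. $\US{U}_a+\US{U}_b=U_0$, or $W=0$, i.e. $U_a=U_b=0$ (with $U$ trivial); this is exactly (a) for $K_6$. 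The finer $\F$-linear clauses of (b) for $K_6$ are then read off the list $_n(K_6-i)$: whenever $U_a\neq0$ one records from the canonical forms the needed $\F$-generators, e.g. for $_n(K_6-5)$ and the series $_n(K_6-6)(X)$ the stripe $M_a=I_n$ already gives $\US{U}_a=U_0$, so every clause holds trivially.

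Next I exploit that the extension operations $U\mapsto U^\zeta$ and $U\mapsto U_\zeta$ (at a maximal, resp. minimal, strong point $\zeta$) satisfy $V_0=U_0$ and $V_x=U_x$ for all $x\neq\zeta$; in particular $V_a=U_a$, $V_b=U_b$ (and $V_q=U_q$ when a peak point $q$ occurs), whence $\US{V}_a=\US{U}_a$ and $\US{V}_b=\US{U}_b$. Since the conditions (a) and (b) are statements about $U_0$ and the stripes at $a,b$ (and $q$) only, they are literally invariant under these extensions. As Tables \ref{List of coreps of A28, A29, A33, A34 and 45}, \ref{List of coreps of A38} and the formulas of Propositions \ref{Classification of A28, A29, A33, A34 and A45}, \ref{A42coreps} present the bulk of the indecomposables of $A_{25},A_{28},A_{29},A_{38},A_{42}$ as iterated extensions $(\,\cdot\,)^{\zeta}$, $(\,\cdot\,)^{\theta}$, $(\,\cdot\,)^{r}$ or $(\,\cdot\,)_{\zeta}$ of corepresentations whose $\{a,b\}$-part is already controlled, the properties transfer to those families at once.

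The substantive cases are the corepresentations produced by the $\seven$-integration $\Int$ (for instance $_{2n}(A_{25}-2)=\Int_{2n-1}(A_{25}-3)$ and the entire $A_{39}$ family of Table \ref{List of coreps of A39}), because integration alters $U_0$ together with the $\{a,b\}$-stripes. Here I would either track the effect of the algorithm of \cite{Rod-Zav-07}, showing that $\seven$-integration with respect to a $\seven$-suitable pair preserves the property that the stripes at the non-pivot weak points $\G$-generate $U_0$, so that $\US{U}_a+\US{U}_b=U_0$ established in the derived poset persists after integration; or, more concretely, verify the conclusion on the finitely many seed forms $F_{13},F_{14},F_{15},F_{17}$ of Appendix \ref{finitetypeposets} together with the canonical blocks of Tables \ref{List of coreps of A38}--\ref{List of coreps of A39}. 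The poset $A_{39}$ is precisely the one for which this bookkeeping fails for (a): the part of $U_0$ produced near the strong point $\theta$ and the peak $q$ need not be reached by $\US{U}_a+\US{U}_b$, which is why $A_{39}$ is excluded from (a) and instead contributes the clause $\US{U}_\theta+U_q=U_0$ in (b).

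The main obstacle is exactly the control of $\seven$-integration on the $\F$-structure of the stripes, i.e. the finer clauses of (b) that replace the $\G$-hull $\US{U}_b$ by the plain $\F$-subspace $U_b$ (respectively $U_q$): these cannot be detected by $\G$-dimension counting and must be checked on the explicit canonical matrices of the integrated families. By contrast, the $\G$-level assertions of (a) are comparatively soft, following from the splitting argument for $K_6$ and the extension-invariance above.
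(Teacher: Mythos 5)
Your proposal is correct, and in the end it converges with the paper's own argument, which consists of a single sentence: the lemma ``holds by checking exhaustively the classification of indecomposable corepresentations of these posets.'' For the families produced by integration $\seven$ --- the bulk of $A_{25}$ and $A_{38}$, all of $A_{39}$, and the $K_6$ types recovered through $A_{25}$ in Table \ref{List of coreps of A25} --- you too fall back on inspecting the explicit canonical matrices, since $\Int$ modifies $U_0$ together with the stripes at $a$ and $b$; that residual verification is precisely the paper's entire proof, so there the two arguments coincide. What you add, and the paper does not, are two genuine reductions. First, for $K_6$ and part (a) you replace table-checking by a real argument: $W=\US{U}_a+\US{U}_b$ is a $\G$-subspace containing $U_a$ and $U_b$, so any $\G$-complement $W'$ splits off as the summand $(W',0,0)$, and indecomposability forces $W=0$ or $W'=0$; this is correct, and it is the only clause of the lemma admitting such a soft proof. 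Second, the invariance of (a) and (b) under the extensions $U^{\zeta}$, $U_{\zeta}$, $U^{r}$ (which fix $U_0$ and every stripe except the one at the added point) correctly disposes of $A_{28}$, $A_{29}$, $A_{42}$ and the extension-defined types of $A_{38}$ once $A_{25}$, $A_{25}^*$ and $A_{38}$ are settled. Your route thus isolates where the content of the lemma actually sits --- in the $\seven$-integrated families and in the $\F$-level clauses of (b), which cannot be detected by $\G$-dimension counting, and which is also why $A_{39}$ must be excluded from (a) --- at the cost of more machinery, whereas the paper's uniform exhaustive check buys brevity. One minor point in your favor: the paper writes the $A_{38}$, $A_{39}$, $A_{42}$ clauses of (b) with $U_p$ although none of these posets has a point named $p$; your reading with the peak point $q$ is clearly the intended one.
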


It holds by checking exhaustively the classification of indecomposable corepresentations of these posets.

\section{Classification of indecomposable corepresentations of $K_7$}\label{sectionK7}

\begin{proof}[Proof of Theorem C3]
The pair $(a,\theta)$ of the critical 2-equipped poset $K_7=\{ a\prec p \prec q ; \: a\prec q; \: \theta \}$ is suitable for differentiation $\seven$, see Figure \ref{K7 derivado}, then its indecomposable corepresentations are obtained by integrating exhaustively all indecomposable corepresentations displayed in Table \ref{classification of coreps of K7}, even its series of corepresentation comes from $K_6$ series. For this reason the square block $X$ in the series $S(X)$ of $K_7$ (see Table \ref{example of series}) is restricted to be in a Frobenius matrix form over $\F$.
\end{proof}

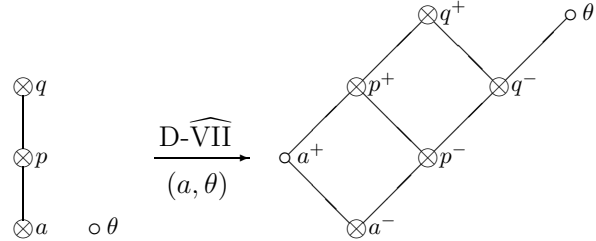
\begin{figure}[h]
\centering
\scalebox{0.9}{
\begin{picture}(250,100)

\multiput(0.35,2)(0,30){3}{$\otimes$}
\multiput(5,8.8)(0,30){2}{\line(0,1){22.3}}
\put(32,2){$\circ$}

\put(62,40){D-$\seven$}
\put(60,35){\vector(1,0){40}}
\put(65,20){$(a,\theta)$}

\put(112,32){$\circ$}
\multiput(140.35,2)(30,30){3}{$\otimes$}
\multiput(140.35,62)(30,30){2}{$\otimes$}
\put(232,92){$\circ$}
\multiput(147.8,7.8)(30,30){2}{\line(1,1){24.4}}
\put(147.8,67.8){\line(1,1){24.4}}
\multiput(147.8,62.2)(30,30){2}{\line(1,-1){24.4}}
\put(116.5,36.5){\line(1,1){25.7}}
\put(116.5,33.5){\line(1,-1){25.7}}
\put(207.8,67.8){\line(1,1){25.7}}

\put(10,63){{\footnotesize $q$}}
\put(10,33){{\footnotesize $p$}}
\put(10,3){{\footnotesize  $a$}}
\put(40,3){{\footnotesize  $\theta$}}

\put(180,93){{\footnotesize $q^+$}}
\put(150,63){{\footnotesize $p^+$}}
\put(120,33){{\footnotesize  $a^+$}}
\put(240,93){{\footnotesize  $\theta$}}
\put(210,63){{\footnotesize $q^-$}}
\put(180,33){{\footnotesize $p^-$}}
\put(150,3){{\footnotesize  $a^-$}}

\end{picture}}
\caption{Differentiation of $K_7$}\label{K7 derivado}
\end{figure}

\begin{table}
\renewcommand{\arraystretch}{1.1}
\renewcommand{\tabcolsep}{1.45mm}
\setlength{\doublerulesep}{0.14pt}
\setlength{\arrayrulewidth}{0.2pt} \small\normalsize
\begin{tabular}{|c|rcl|c|c|rcl|c|}\hline
T  & \multicolumn{3}{|c|}{corepresentation} & f & T & \multicolumn{3}{|c|}{corepresentation} & f \\ \hline
1  &$(K_7 - 1) $ & = & $\Int (K_6 - 1)             $ &  1 &14  &$(K_7 - 14)$ & = &$\Int (A_{25} - 4)  $  &  2 \\
2  &$(K_7 - 2) $ & = & $\Int (K_6 - \widetilde{1}) $ &  1 &15  &$(K_7 - 15)$ & = &$\Int (A_{33})      $  &  2 \\
3  &$(K_7 - 3) $ & = & $\Int (K_6 - 2)             $ &  2 &16  &$(K_7 - 16)$ & = &$\Int (A_{39} - 1)  $  &  1 \\
4  &$(K_7 - 4) $ & = & $\Int (K_6 - 3)             $ &  2 &17  &$(K_7 - 17)$ & = &$\Int (A_{39} - 2)  $  &  1 \\
5  &$(K_7 - 5) $ & = & $\Int (K_6 - 4)             $ &  2 &18  &$(K_7 - 18)$ & = &$\Int (A_{39} - 3)  $  &  1 \\
6  &$(K_7 - 6) $ & = & $\Int (A_{38} - 1)          $ &  1 &19  &$(K_7 - 19)$ & = &$\Int (A_{39} - 4)  $  &  1 \\
7  &$(K_7 - 7) $ & = & $\Int (A_{38} - 2)          $ &  1 &20  &$(K_7 - 20)$ & = &$\Int (A_{39} - 5)  $  &  1 \\
8  &$(K_7 - 8) $ & = & $\Int (A_{38}-\widetilde{2})$ &  1 &21  &$(K_7 - 21)$ & = &$\Int (A_{39} - 6)  $  &  2 \\
9  &$(K_7 - 9) $ & = & $\Int (A_{38} - 3)          $ &  1 &22  &$(K_7 - 22)$ & = &$\Int (A_{39} - 7)  $  &  2 \\
10 &$(K_7 - 10)$ & = & $\Int (A_{38} - 4)          $ &  2 &23  &$(K_7 - 23)$ & = &$\Int (A_{39} - 8)  $  &  2 \\
11 &$(K_7 - 11)$ & = & $\Int (A_{25} - 1)          $ &  1 &24  &$(K_7 - 24)$ & = &$\Int (A_{39} - 9)  $  &  2 \\
12 &$(K_7 - 12)$ & = & $\Int (A_{25} - 2)          $ &  2 &25  &$(K_7 - 25)$ & = &$\Int (A_{39} - 10) $  &  2 \\
13 &$(K_7 - 13)$ & = & $\Int (A_{25} - 3)          $ &  2 &    &             &   &                         &    \\ \hline
\end{tabular}
\caption{Classification of indecomposables corepresentations of the critical poset $K_7$. Indecomposable corepresentations types $(K_7 - 5)$ and $(K_7 - 15)$ are self-dual. Remaining corepresentations are definided as follows $(K_7 - i^*) = (K_7 - i)^*$.
}\label{classification of coreps of K7}
\end{table}

Notice that an one-parameter 2-equipped poset $\p $ containing a critical subset $K_7$ has the following form
\begin{equation}\label{K7 subset}
\p  = K_7 + (K_7)^{\blacktriangledown} + (K_7)^{\blacktriangle}.
\end{equation}
Furthermore, given $x\in (K_7)^{\blacktriangledown} \subset\p$, it satisfies one of the following conditions: \textbf{(1)} $q\lhd x$; \textbf{(2)} $\{ p, \theta \} \lhd x$; \textbf{(3)} $q \prec x\: $ and $\: \theta \lhd x$; \textbf{(4)} $q \prec x\:$ and $\: p \lhd x$. If $x\in (K_7)_{\blacktriangle}$ it satisfies one of the dual conditions: \textbf{(1')} $a\rhd x$; \textbf{(2')} $\{ p, \theta \} \rhd x$; \textbf{(3')} $x\prec a\:\: $ and $\:\: \theta \rhd x$; \textbf{(4')} $x\prec a\:\: $ and $\:\:p \rhd x$.

Let $W$ be a corepresentation of $\p$ as in (\ref{K7 subset}), its codimension vector is denoted by $r = r_W = (r_1, r_2, r_3, r_4)$ where $r_1 = \codim_{\G} \US{W}_q$; $r_2 = \codim_{\G} \US{W}_p + W_{\theta}$; $r_3 = \codim_{\F} W_q + W_{\theta}$; $r_4 = \codim_{\F} W_q + \US{W}_p$. For dual case is defined $r^* = r^*_W = (r_1^*, r_2^*, r_3^*, r_4^*)$ where $r_1^* = \dim_{\G} \UI{W}_a$; $r_2^* = \dim_{\G} \UI{W}_p \cap W_{\theta}$; $r_3^* = \dim_{\F} W_a \cap W_{\theta}$; $r_4^* = \dim_{\F} W_a \cap \UI{W}_p$.

By checking exhaustively all indecomposable corepresentations of $K_7$ (see Table \ref{classification of coreps of K7}) it follows that indecomposable corepresentations $W$ of $K_7$ such that $r_W\not = 0$ are the listed ones in table \ref{K7 coreps r no 0} (denoted by $W_i$ with $i\in\{ 1,\ldots , 9 \}$). Notice that $r^*_{W_i} = 0$ for all $i$.

\begin{table}[hbtp]
\renewcommand{\tabcolsep}{2mm}
\renewcommand{\arraystretch}{1.1}
\begin{tabular}{llll}
$W_{1}\cong \ _1(K_7 - 2^*)$,& $r_{W_1} = (0,0,0,1)$; &  $W_{6}\cong \ _1(K_7 - 12)$, & $r_{W_6} = (0,1,1,1)$;\\
$W_{2}\cong \ _2(K_7 - 8)$,  & $r_{W_2} = (0,0,0,1)$; &  $W_{7}\cong \ _1(K_7 - 15)$, & $r_{W_7} = (0,2,0,0)$;\\
$W_{3}\cong \ _1(K_7 - 9)$,  & $r_{W_3} = (0,0,1,0)$; &  $W_{8}\cong \ _2(K_7 - 21)$, & $r_{W_8} = (0,1,0,2)$;\\
$W_{4}\cong \ _2(K_7 - 10)$, & $r_{W_4} = (0,0,0,1)$; &  $W_{9}\cong \ _1(K_7 - 22)$, & $r_{W_9} = (1,1,2,2)$;\\
$W_{5}\cong \ _1(K_7 - 11)$, & $r_{W_5} = (0,0,1,0)$; &                               &                       \\
\end{tabular}
\caption{Corepresentations of $K_7$ with $r\not=0$.}\label{K7 coreps r no 0}
\end{table}

\begin{lemma}
Let $\p$ be a sincere one-parameter 2-equipped poset containing the subposet $K_7$, then $(K_7)^{\blacktriangledown} = \emptyset$\hspace{2mm} or \hspace{2mm}$(K_7)_{\blacktriangle}= \emptyset$.
\end{lemma}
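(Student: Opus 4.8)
The plan is to argue by contradiction: assuming both $(K_7)^{\blacktriangledown}\neq\emptyset$ and $(K_7)_{\blacktriangle}\neq\emptyset$, I will build a nontrivial idempotent endomorphism of a sincere indecomposable corepresentation. Since $\p$ is sincere, fix a sincere $U\in\ind\p$ and restrict it to the critical subposet, writing $W=U|_{K_7}=\bigoplus_j W^j$ as a direct sum of indecomposable corepresentations of $K_7$. First I would record that the codimension vectors $r$ and $r^*$ are additive over direct sums: the hull $\US{(\cdot)}$, the cohull $\UI{(\cdot)}$, and the sums and intersections entering the definitions of $r,r^*$ all distribute over $\bigoplus_j W^j$, so $r_W=\sum_j r_{W^j}$ and $r_W^*=\sum_j r_{W^j}^*$.

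Next I would show that sincerity forces $r_W\neq 0$ and $r_W^*\neq 0$. Pick $x\in(K_7)^{\blacktriangledown}$; since $d_x>0$ we have $U_x\supsetneq\underline{U_x}$, hence $\underline{U_x}\subsetneq U_0$. Depending on which of the conditions (1)--(4) after (\ref{K7 subset}) the point $x$ satisfies, the radical $\underline{U_x}$ contains $\US{U}_q$, $\US{U}_p+U_\theta$, $U_q+U_\theta$, or $U_q+\US{U}_p$ respectively; each of these is therefore a proper subspace of $U_0$, so the corresponding coordinate of $r_W$ is a strictly positive (co)dimension and $r_W\neq 0$. The dual argument applied to a point of $(K_7)_{\blacktriangle}$, invoking Proposition~\ref{duality principle} for the passage to cohulls and intersections, gives $r_W^*\neq 0$.

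Now, by Table~\ref{K7 coreps r no 0} every indecomposable corepresentation of $K_7$ with $r\neq 0$ has $r^*=0$, and dually every one with $r^*\neq 0$ has $r=0$; in particular no summand has both invariants nonzero. Combined with additivity, $r_W\neq 0$ produces a summand $W^{j_0}$ with $r_{W^{j_0}}\neq 0$ and $r_{W^{j_0}}^*=0$, while $r_W^*\neq 0$ produces a summand $W^{k_0}$ with $r_{W^{k_0}}^*\neq 0$ and $r_{W^{k_0}}=0$; these are necessarily distinct. I would then set $P=\bigoplus_{j:\,r_{W^j}\neq 0}W^j_0$ and let $Q$ be the complementary sum, so $U_0=P\oplus Q$ with $P\neq 0\neq Q$. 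The key step is to check that every block in $Q$ is swallowed into $U_x$ for each $x\in(K_7)^{\blacktriangledown}$ — for instance in type (1), $r_1(W^j)=0$ means $\US{W^j}_q=W^j_0\subseteq\US{U}_q\subseteq\underline{U_x}$, so $Q\subseteq U_x$ — and, dually, that $U_y\subseteq Q$ for each $y\in(K_7)_{\blacktriangle}$ because every block of $P$ has $r^*=0$ (e.g. in type $(3')$ one gets $U_y\subseteq U_a\cap U_\theta$, which meets each $P$-block trivially). On $K_7$ itself the decomposition $U_0=P\oplus Q$ visibly respects each stripe.

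Granting these inclusions, $U_w=(U_w\cap P)\oplus(U_w\cap Q)$ for every $w\in\p$, so the $\G$-linear projection $U_0\to P$ along $Q$ satisfies $\pi(U_w)=U_w\cap P\subseteq U_w$ and is a nontrivial idempotent endomorphism of $U$ in $\corep\p$, contradicting the indecomposability of $U$. I expect the main obstacle to be precisely the bookkeeping in this last verification: confirming, for each of the four ascending types (1)--(4) and the four dual types $(1')$--$(4')$, that a single vanishing coordinate of $r_{W^j}$ (resp.\ $r_{W^j}^*$) genuinely forces the whole block $W^j_0$ into $U_x$ (resp.\ keeps $U_y$ disjoint from it), which requires careful handling of the $\G$-hull versus $\G$-cohull and of the $\F$- versus $\G$-dimension distinctions, together with the duality principle.
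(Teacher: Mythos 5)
Your proof is correct and takes essentially the same route as the paper: restrict $U$ to $K_7$, use additivity of the invariants $r$ and $r^*$ over direct summands, invoke Table~\ref{K7 coreps r no 0} (every indecomposable of $K_7$ with $r\neq 0$ has $r^*=0$), and conclude that no indecomposable corepresentation of $\p$ can be sincere simultaneously at a point of $(K_7)^{\blacktriangledown}$ and a point of $(K_7)_{\blacktriangle}$. The only difference is presentational: where the paper simply asserts that sincerity at $x\in(K_7)^{\blacktriangledown}$ forces every summand of $U|_{K_7}$ to be one of the $W_i$, you make the underlying splitting explicit by grouping the $r\neq 0$ blocks into $P$ and the $r=0$ blocks into $Q$ and exhibiting the resulting idempotent endomorphism, which is precisely the verification the paper's one-line claim rests on.
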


\begin{proof}
Let $U \in \ind \p $ sincere at $x \in (K_7)^{\blacktriangledown}$, and $V = U|_{K_7}$ be a restricted corepresentation to $K_7$, then decompose it in a direct sum $V = \bigoplus_j V^j$. Since $U$ is sincere at $x$, for every summand $V^j \simeq W_i$ ($i\in \{1, \ldots , 9\}$), thus $U$ is not sincere at any point in $(K_7)_{\blacktriangle}$ because of $r^*_W = 0$ for $W=W_i$. Analogously, $U$ is not sincere in $(K_7)^{\blacktriangledown}$ provided that $U\in\ind \p$ is to be sincere at some point in $(K_7)_{\blacktriangle}$.
\end{proof}

\begin{lemma}\label{K7 the one}
Let $\p$ be a sincere one-parameter 2-equipped poset containing a subset $K_7$. Then $\p = K_7$.
\end{lemma}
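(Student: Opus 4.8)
The plan is to argue by contradiction, reducing first to a one-sided situation and then exploiting the smallness of the corepresentations in Table~\ref{K7 coreps r no 0}. First I would use that $K_7$ is self-dual: its antiisomorphism interchanges $a$ and $q$ and fixes $p,\theta$, so $K_7^*\cong K_7$. Hence, by \lemref{dualsincere} together with the preceding lemma (which gives $(K_7)^{\blacktriangledown}=\emptyset$ or $(K_7)_{\blacktriangle}=\emptyset$), I may assume $(K_7)_{\blacktriangle}=\emptyset$, and it suffices to prove $(K_7)^{\blacktriangledown}=\emptyset$. Two structural remarks will be used throughout: every $x\in(K_7)^{\blacktriangledown}$ lies above $a$ (indeed $a\prec q\lhd x$ or $a\prec q\prec x$ according to the type, so $a<x$), and $\theta$ is the unique strong point of $K_7$.

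Next I would suppose $(K_7)^{\blacktriangledown}\neq\emptyset$ and derive a contradiction from sincerity. Choose a sincere $U\in\ind\p$ and put $V=U|_{K_7}=\bigoplus_j V^j$; since $U$ is sincere at some point of $(K_7)^{\blacktriangledown}$, the argument of the preceding lemma forces every summand $V^j$ to be isomorphic to one of $W_1,\dots,W_9$. Because $K_7$ is a full subposet, the radical $\underline{U_y}$ of each $U_y$ ($y\in K_7$) can only shrink upon restriction, so $V$ is again sincere at $a,p,q,\theta$; in particular some summand $V^{j_0}$ has $U^{j_0}_a\neq 0$ and some summand $V^{j_1}$ has $U^{j_1}_\theta\neq 0$.

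The heart of the argument is a saturation/splitting step. I would pick $x$ minimal in $(K_7)^{\blacktriangledown}$, so that $\underline{U_x}$ is computed entirely from the spaces attached to $K_7$, and split into the four types of \eqref{K7 subset}. If $x$ is of type (1), (2) or (4), then $a\lhd x$ or $p\lhd x$, whence $\US{U}_a\subseteq\underline{U_x}$ or $\US{U}_p\subseteq\underline{U_x}$; since each $W_i$ is small (with $\dim_\G\le 2$ and flag concentrated) a nonzero $U^{j_0}_a$ already $\G$-spans $U^{j_0}_0$, so $U^{j_0}_0=\US{(U^{j_0})}_a\subseteq\underline{U_x}\subseteq U_x$. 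If $x$ is of type (3), then $\theta\lhd x$ gives $\US{U}_\theta\subseteq\underline{U_x}$, and the summand $V^{j_1}$, whose $U^{j_1}_\theta$ is a nonzero $\G$-strong subspace filling $U^{j_1}_0$, is absorbed in the same way, $U^{j_1}_0\subseteq U_x$. In either case the absorbed summand $V^{j}$ satisfies $U^{j}_0\subseteq U_x$; by minimality of $x$ this inclusion propagates along the chains of $(K_7)^{\blacktriangledown}$ above $x$, so $V^{j}$ splits off as a direct summand of $U$, contradicting indecomposability of the sincere corepresentation $U$. Thus $(K_7)^{\blacktriangledown}=\emptyset$ and $\p=K_7$.

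The step I expect to be the main obstacle is making the propagation watertight when $(K_7)^{\blacktriangledown}$ has several, possibly incomparable, minimal points: there the absorbed summand need not be saturated at an incomparable upper point, and one must either refine the bookkeeping over the nine $W_i$ and the four types against the explicit supports recorded in Table~\ref{K7 coreps r no 0}, or observe that incomparable points stacked over $K_7$ force a second critical subposet among $K_1,\dots,K_9$, contradicting one-parameter type by Theorem~A. Confirming, for each $W_i$, that a nonzero $U_a$ (resp. $U_\theta$) indeed $\G$-generates the summand is the routine but indispensable verification underlying the saturation claim.
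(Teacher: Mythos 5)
Your overall mechanism (restrict $U$, decompose into the $W_i$ of Table~\ref{K7 coreps r no 0}, absorb a summand into $U_x$, split it off by the modular law) is the right kind of idea and is close in spirit to what the paper does, but the two saturation claims on which your absorption step rests are \emph{false}, and they fail exactly where the real difficulty of the lemma lies. Checking Table~\ref{K7 coreps r no 0} against the dimension table for $K_7$ in Appendix~\ref{dimensions of K7 and K9}: $W_4\cong{}_2(K_7-10)$ has dimension $(2;1,0,1,1)$, so $(W_4)_a\neq 0$ while $\dim_{\G}\US{W}_a=1<2=\dim_{\G}W_0$; a nonzero $a$-part does \emph{not} $\G$-span that summand. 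Likewise $W_2$, $W_4$, $W_8$ all have $\dim_{\G}W_\theta=1<2=\dim_{\G}W_0$, so a nonzero $\theta$-part does not fill the summand either. The absorption that actually works (and is implicit in the paper) goes not through $U_a$ or $U_\theta$ but through the spaces defining the codimension vector $r$: for $x$ of type (1) one has $\underline{U_x}\supseteq\US{U}_q$, so a summand is absorbed precisely when its coordinate $r_1$ vanishes, and similarly with $\US{U}_p+U_\theta$, $U_q+U_\theta$, $U_q+\US{U}_p$ for types (2), (3), (4). With that mechanism one concludes that \emph{every} summand must have the relevant coordinate of $r$ nonzero, and in cases (1)--(3) the contradiction is that all such $W_i$ vanish at $a$ (resp.\ at $\theta$). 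Your version cannot reach this conclusion: the summand carrying $U_a$ may be $W_4$, which has $r_4\neq 0$, is not absorbed, and yields no contradiction.

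This is not a repairable technicality, because it is exactly the paper's case (4) ($q\prec x$, $p\lhd x$, $x\Inc\theta$): there the summands with $r_4\neq 0$ are $W_1,W_2,W_4,W_6,W_8,W_9$, several of which are sincere at $a$, $q$ or $\theta$, so neither a vanishing argument nor a splitting argument suffices. The paper disposes of this case with a genuinely different tool: it passes to the matrix realization, reduces the stripe of $x$ to a problem on representations of the chain $9<6<8<1<2<4$ (Figure~\ref{K7 y chain}), and uses that the sincere subsets of a chain are singletons to force $d_a=0$ or $d_\theta=0$. Your proposal contains no substitute for this step. Your second, acknowledged gap (several incomparable minimal points of $(K_7)^{\blacktriangledown}$) is also not closed by the fixes you suggest: two incomparable strong points over $K_7$ need not create a critical subposet (a triad of strong points is of finite type), and invoking Theorem~A here risks circularity, since the present lemma feeds into its proof. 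The paper instead handles multiplicity by induction on $|(K_7)^{\blacktriangledown}|$, restricting at a \emph{maximal} point $x$ to $\p\setminus x$ and analysing separately the summands whose support is not contained in $K_7$ (these must be among $F_{13},\dots,F_{18}$ or ordinary finite posets).
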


\begin{proof}

One can assume that $\p = K_7 + (K_7)^{\blacktriangledown}$, the case $\p = K_7 + (K_7)_{\blacktriangle}$ is analogous. The proof uses induction on $|(K_7) ^{\blacktriangledown}| \ge 1$. Let $x\in (K_7)^{\blacktriangledown}\cap \Max \p$\hspace{2mm} and \hspace{2mm}$U\in\ind\p$ be a sincere indecomposable corepresentation with $d = \dimc U$. The restriction $V=U|_{\p\setminus x}$ is considered as direct sum of indecomposable  corepresentations $V =\bigoplus_i V^i$ and its support is denoted by $S^i = \supp V^i$. In the base case $|(K_7) ^{\blacktriangledown}| = 1$ it holds that $S^i\subset K_7$ where $V_i \simeq W$ with $r_W \not = 0$, it means that each summand corresponds to some among the corepresentations in Table \ref{K7 coreps r no 0}, other case $U$ were not sincere at $x$. When $|(K_7) ^{\blacktriangledown}| > 1$ happens that $S^i \subset K_7$ as before, or $S^i\not\subset K_7$, so $S^i$ have to be a finite poset $F_{13}, \ldots , F_{18}$ (whether it contains at least a weak point) or an ordinary finite poset un poset, other case were not of one-parameter type.

The study of corepresentation $U$ is reduced to the following four cases: \textbf{(1)} $q\lhd x$. When $S^i\subset K_7$ each summand $V^i\simeq W$ satisfies $r_1\not= 0$, $W_9$ is the unique summand that does it, but $\dim_{\F} W_a = 0$. If $S^i\not\subset K_7$ there exists $j$ such that $a\in S^j$, notice that $S^j\not= F_{13}, \ldots , F_{16}, F_{18}$ (because it have to contain at least another strong point $z$, and there are two options: (a) $q\lhd z$; (b) $p \lhd z\: $ and $\: \theta \lhd z$; in both of them $a < z$). If $S^j = F_{17}$ $\codim_{\G}V^i_a = 0$. It follows that any summand $V^i\simeq W$ with $r_1 \not= 0$ is not sincere at $a$.

\textbf{(2)} $\{ p, \theta \}\lhd x$. When $S^i\subset K_7$ each summand $V^i\simeq W$ satisfies $r_2\not= 0$, thus is one among $W_6 , \dots , W_9$, any case $dim_{\F} W_a = 0$. If $S^i\not\subset K_7$ there exists $j$ such that $a\in S^j$, for the same reasons of case (2) $S^j\not= F_{13}, \ldots , F_{18}$. It follows that each summand of this form is not sincere at $a$.

\textbf{(3)} $q\prec x\:$ and $\:\theta\lhd x$ (and also $p\prec x$, or it is the second case). When $S^i\subset K_7$ each summand $V^i\simeq W$ satisfies $r_3\not= 0$, summands $W_3 , W_5, W_6 , W_9$ are the only ones which do it, but $\dim_{\G} W_{\theta} = 0$. If $S^i\not\subset K_7$ and $S^i$ has a non trivial equipment, there exists $j$ such that $z, \ \theta \in S^j$ for some weak point $z$, there are there variants: (a) $ q \prec z$ and $\theta \lhd z$, but any finite poset do not satisfy $\theta \lhd z$; (b) $q \prec z\: $ and $\: p\lhd z$, also $z < x$ (to avoid that $K_6 = \{ z, \ x \}\subset \p)$ but then $p\lhd x$ is the second case; (c) $q \lhd z$, for the same reasons implies $p \lhd x$. If $S^j$ is an ordinary finite poset should have at least another strong point $\varphi$ and two cases happen: (a) $p\lhd \varphi \:$ and $\: \theta \lhd \varphi$, and $\varphi \parallel x$ (to avoid that $p\lhd x$), it means that $S^j$ is a chain and then $S^j = F_1\subset K_7$, contradiction; (b) $q\lhd \varphi \:$, $\: \varphi \parallel x$ further $\varphi \parallel \theta$ (or it would be latest case), but then $K_8 = \{ \varphi, \ \theta, \ x \}\subset \p$, contradiction. It is concluded that every summand $V^i\simeq W$ that satisfy $r_3 \not= 0$ are not sincere at $\theta$.

\textbf{(4)} $q\prec x\:$ and $\: p\lhd x$ (and also $x\Inc \theta$, other case it would be third case). There is no point $q < z < x$, due to it implies $q\lhd x\: $ or $\: \theta \lhd x$. If there exist another maximal points, cases (1), (2), (3) apply. Therefore, it is assumed that $\p = K_7 + x$ and $S^i\subset K_7$. Since $r_4\not= 0$, the summands $W \simeq V^i$ are isomorphic to some among $W_1, W_2, W_4, W_6, W_8, W_9$.

Consider the matrix realization $M_U$ of $U$, after reducing the restricted corepresentation $M_V$, a mixed type matrix problem is obtained over the vertical strip $x$, in which are admitted $\G$-elementary column transformations, that strip is separated into six horizontal stripes given by the chain $ 9 < 6 < 8 < 1 < 2 < 4 $ (numbered according to corresponding corepresentation). The stripe(s) $9$ admits ($1,2,4,6,8$ admit) $\G$-elementary ($\F$-elementary) row transformations in such a way that the matrix problem on the vertical stripe $x$ corresponds to a problem on classification of indecomposable representations of a chain (see Figure \ref{K7 y chain}).

The sincere subsets of that chain are singleton sets, it means that just one of $W_1, W_2, W_4, W_6, W_8, W_9$ is the indecomposable corepresentation of $\p\setminus x$, therefore $d_a = 0\:$ or $\: d_b = 0$ any case it contradicts the initial assumption that $U$ is sincere. It follows the lemma. \end{proof}

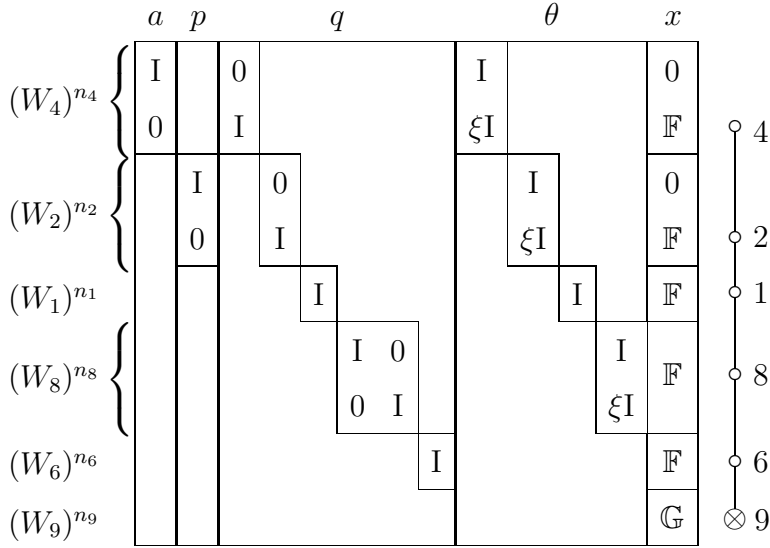
\begin{figure}[hbtp]
\centering
\begin{picture}(450,200)
\put(32,0){\renewcommand{\arraystretch}{1.46}
\renewcommand{\tabcolsep}{1.65mm}
\setlength{\doublerulesep}{0.14pt}
\setlength{\arrayrulewidth}{0.2pt} \small\normalsize
\begin{tabular}[b]{||c||c||cccccc||cccc||c||}
\mc{1}{c}{$a$}&\mc{1}{c}{$p$}&\mc{6}{c}{$q$}&\mc{4}{c}{$\theta$}&\mc{1}{c}{$x$} \\ \hline
  I&   &\mc{1}{c|}{$0$}&&&&&&\mc{1}{c|}{I}&&&&$0$ \\
$0$&   &\mc{1}{c|}{I}&&&&&&\mc{1}{c|}{$\xi$I}&&&&$\F$ \\ \cline{1-2} \cline{3-4} \cline{9-10} \cline{13-13}
   &  I&\mc{1}{c|}{}&\mc{1}{c|}{$0$}&&&&&\mc{1}{c|}{}&\mc{1}{c|}{I}&&&$0$ \\
   &$0$&\mc{1}{c|}{}&\mc{1}{c|}{I}&&&&&\mc{1}{c|}{}&\mc{1}{c|}{$\xi$I}&&&$\F$ \\  \cline{2-2} \cline{4-5} \cline{10-11} \cline{13-13}
   &   &&\mc{1}{c|}{}&\mc{1}{c|}{I}&&&&&\mc{1}{c|}{}&\mc{1}{c|}{I}&&$\F$ \\ \cline{5-7} \cline{11-13}
   &   &&&\mc{1}{c|}{}&I&\mc{1}{c|}{$0$}&&&&\mc{1}{c|}{}&\mc{1}{c|}{I}&\multirow{2}[0]{*}{$\F$} \\
   &   &&&\mc{1}{c|}{}&$0$&\mc{1}{c|}{I}&&&&\mc{1}{c|}{}&\mc{1}{c|}{$\xi$I}& \\ \cline{6-8} \cline{12-13}
   &   &&&&&\mc{1}{c|}{}&I&&&&&$\F$ \\ \cline{8-8} \cline{13-13}
   &&&&&&&&&&&&$\G$ \\ \hline
\end{tabular}}
\put(0,165){$(W_4)^{n_4} \left\{ \begin{array}{c}
\\[26pt]
\end{array}
\right.$}
\put(0,122){$(W_2)^{n_2} \left\{ \begin{array}{c}
\\[26pt]
\end{array}
\right.$}
\put(0,90){$(W_1)^{n_1}$}
\put(0,60){$(W_8)^{n_8} \left\{ \begin{array}{c}
\\[26pt]
\end{array}
\right.$}
\put(0,27){$(W_6)^{n_6}$}
\put(0,5){$(W_9)^{n_9}$}
\put(272,156){$\circ$}
\put(272,114){$\circ$}
\put(272,93){$\circ$}
\put(272,62){$\circ$}
\put(272,29){$\circ$}
\put(270.35,7){$\otimes$}
\put(281.8,153){$4$}
\put(281.8,113){$2$}
\put(281.8,92){$1$}
\put(281.8,61){$8$}
\put(281.8,28){$6$}
\put(282.35,6){$9$}
\put(275,119){\line(0,1){37.7}}
\put(275,98){\line(0,1){16.8}}
\put(275,67){\line(0,1){26.8}}
\put(275,34){\line(0,1){28.8}}
\put(275,14){\line(0,1){16}}
\end{picture}
\caption{Corepresentation of $\p = K_7 + x$, which satisfies (4), and its relationship with representations of a chain. $(W_i)^{n_i} = \bigoplus_{n_i}W_i$. }\label{K7 y chain}
\end{figure}

\section{Classification of one-parameter 2-equipped posets containing $K_8$}\label{sectionK8}

The aim of this section is to give a complete classification of the indecomposable corepresentations of the critical poset $K_8 = \{ \varrho ; \: \sigma ; \: a \}$ where $a$ is the unique weak point (see Figure \ref{critical equipped posets}), and of each sincere one-parameter 2-equipped poset containing it, listed in appendixes  \ref{sincereK8posets}, \ref{specialposets}. These classifications\footnote{ The classification of indecomposable corepresentations of $K_8$ was already obtained in \cite{Rod-10} by using special matrix reductions.} are obtained recursively in terms of differentiation algorithms D-I and completion for ordinary posets \cite{Zav-77, Zav-05-TP}, and D-$\seven$ and completion for 2-equipped posets \cite{Rod-Zav-07}, except the corepresentation series of $K_8$.

\begin{figure}[h]
\scalebox{0.9}{
\begin{picture}(240,90)
\multiput(2,32)(30,0){2}{$\circ$}
\put(60.35,32){$\otimes$}
\put(62,62){$\circ$}
\put(65,39){\line(0,1){24.1}}
\put(33,5){\s{$A_{26}$}}
\put(2,25){\s{$\varrho$}}
\put(32,25){\s{$\sigma$}}
\put(62,25){\s{$a$}}
\put(70,63){\s{$\eta$}}

\put(90,35){\tiny{$(\varrho,\eta)$}}
\put(87,45){\vector(1,0){30}}
\put(85,50){\tiny{D-I-compl.}}

\multiput(132,32)(30,0){2}{$\circ$}
\put(147,62){$\circ$}
\put(190.35,32){$\otimes$}
\put(192,62){$\circ$}
\put(195,39){\line(0,1){24.1}}
\put(136,37){\line(1,2){13}}
\put(164,37){\line(-1,2){13}}
\put(137,36){\line(2,1){55.9}}
\put(166.5,36.5){\line(1,1){26.9}}

\put(163,5){\s{$\overline{A_{26}'}$}}
\put(130,25){\s{$\varrho$}}
\put(163,25){\s{$\sigma^{+}$}}
\put(155,61){\s{$\sigma^{-}$}}
\put(192,25){\s{$a$}}
\put(200,63){\s{$\eta$}}
\end{picture} \begin{picture}(200,90)
\multiput(2,32)(30,0){2}{$\circ$}
\put(60.35,32){$\otimes$}
\put(17,62){$\circ$}
\put(6,37){\line(1,2){13}}
\put(34,37){\line(-1,2){13}}
\put(33,5){\s{$A_{27}$}}
\put(2,25){\s{$\varrho$}}
\put(32,25){\s{$\sigma$}}
\put(62,25){\s{$a$}}
\put(25,63){\s{$\zeta$}}

\put(85,35){\tiny{$(a,\zeta)$}}
\put(82,45){\vector(1,0){30}}
\put(85,50){\tiny{D-$\seven$}}

\multiput(132,32)(30,0){2}{$\circ$}
\put(147,62){$\circ$}
\put(190.35,32){$\otimes$}
\put(192,62){$\circ$}
\put(195,39){\line(0,1){24.1}}
\put(136,37){\line(1,2){13}}
\put(164,37){\line(-1,2){13}}
\put(151.8,63.9){\line(3,-2){40}}

\put(163,5){\s{$A_{27}'$}}
\put(130,25){\s{$\varrho$}}
\put(163,25){\s{$\sigma$}}
\put(159,62){\s{$\zeta$}}
\put(192,25){\s{$a^-$}}
\put(200,63){\s{$a^+$}}
\end{picture}}

\centering
\scalebox{0.9}{
\begin{picture}(240,100)
\multiput(2,62)(60,0){2}{$\circ$}
\put(30.35,62){$\otimes$}
\put(15.35,32){$\otimes$}
\put(21.8,38.6){\line(1,2){11.4}}
\put(18.2,38.6){\line(-1,2){12.2}}
\put(33,5){\s{$A_{40}^*$}}
\put(0,55){\s{$\varrho$}}
\put(18,25){\s{$q$}}
\put(62,55){\s{$\sigma$}}
\put(35,55){\s{$a$}}

\put(85,35){\tiny{$(q,\sigma)$}}
\put(82,45){\vector(1,0){30}}
\put(85,50){\tiny{D-$\seven$}}

\multiput(132,72)(60,0){2}{$\circ$}
\put(147,42){$\circ$}
\put(160.35,72){$\otimes$}
\put(175.35,42){$\otimes$}
\put(160.35,12){$\otimes$}
\put(151,47){\line(1,2){12.2}}
\put(149,47){\line(-1,2){13}}
\put(166.8,18.6){\line(1,2){11.4}}
\put(163.2,18.6){\line(-1,2){12.2}}
\put(181.8,48.6){\line(1,2){12.2}}
\put(178.2,48.6){\line(-1,2){11.6}}
\put(183,5){\s{$(A_{40}^*)^{\prime}$}}
\put(130,65){\s{$\varrho$}}
\put(140,34){\s{$q^+$}}
\put(158,1){\s{$q^-$}}
\put(180,34){\s{$a^-$}}
\put(194,65){\s{$\sigma$}}
\put(159,56){\s{$a^+$}}
\end{picture}}
\caption{Combinatorical description of differentiation I and $\seven$ applied to the posets $A_{26}$, $A_{27}$ and $A_{40}^*$.}\label{Differentiation A26 A27 and A40}
\end{figure}
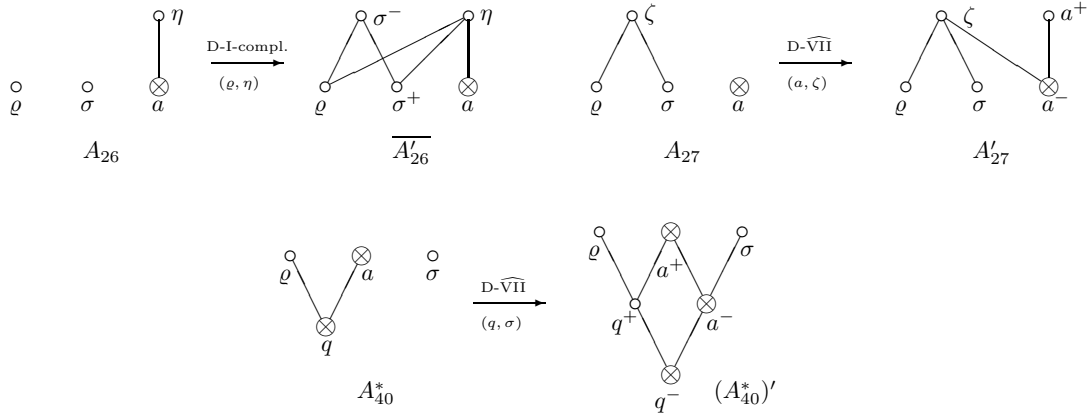

We give a proof of Theorem C3, in which also are obtained classifications of indecomposable corepresentation of $A_{26}$, $A_{27}$, $A_{40}^*$ and their dual posets, see propositions \ref{A26A27coreps} and \ref{A40coreps} below.

\begin{proof}[Proof of Theorem C3]
Let $U$ be an indecomposable corepresentation of $K_8$, three cases can happen:

\textbf{(1)} $U$ satisfies (a) $\codim_{\G} \US{U}_a \not= 0 \; $ or (b) $\; \codim_{\G} \US{U}_{\varrho} + \US{U}_{\sigma} \not= 0\;$ or (c) $\; \dim_{\F} U_a \cap U_{\varrho} \not= 0\; $ or (d) $\; \dim_{\F} U_a \cap U_{\sigma} \not= 0\; $. If $U$ satisfies (a), (b) or (c) (condition (d) is analogous to (c)), it can be extended to a corepresentation $V$ of $A_{26} = \{ \varrho , \ \sigma , \ a \lhd \eta \}$, $A_{27} = \{ \varrho \lhd \zeta \rhd \sigma , \ a \}$ or $A_{40} = \{ \varrho \rhd q \lhd a, \ \sigma \}$ respectively (where $a$ and $q$ are the only weak points), namely, in case (a) by $V_{\eta} = \US{U}_a$, (b) by $V_{\zeta} = U_{\varrho} + U_{\sigma}$, or (c) by $V_q = U_{\varrho}\cap U_a$. According to the case, $V$ is obtained by integration from an indecomposable corepresentation of the completed derived poset $\overline{(A_{26}^{\prime})}$ with respect to D-I in case (a), of the derived poset $A_{27}^{\prime}$ in case (b) and of the derived poset $A_{40}^{\prime}$ in case (c) with respect to D-$\seven$, see Figure \ref{Differentiation A26 A27 and A40}. Finally $U$ is obtained by restricting $V|_{K_8}$.

\textbf{(2)} $U$ satisfies (a') $\dim_{\G} \UI{U}_a \not= 0 \; $ or (b') $\; \dim_{\G} \UI{U}_{\varrho} \cap \US{U}_{\sigma} \not= 0\;$ or (c') $\; \codim_{\F} U_a + U_{\varrho} \not= 0\; $ or (d') $\; \codim_{\F} U_a + U_{\sigma} \not= 0\; $. It is the dual condition of (1). Any corepresentation $U$ of $K_8$ which satisfies condition (2) may be obtained by duality from some one which satisfies (1).

\textbf{(3)} $U$ satisfies $\codim_{\G} \US{U}_a = \dim_{\G} \UI{U}_a = 0 \; $ and $\; \codim_{\G} \US{U}_{\varrho} + \US{U}_{\sigma} = \dim_{\G} \UI{U}_{\varrho} \cap \US{U}_{\sigma} = 0\;$ and $\; \dim_{\F} U_a \cap U_{\varrho} = \codim_{\F} U_a + U_{\varrho} = 0\;$ and $\; \dim_{\F} U_a \cap U_{\sigma} = \codim_{\F} U_a + U_{\sigma} = 0\;$. Consider a matrix realization of $U$, then it may be turned into a matrix form
$M =$ \renewcommand{\arraystretch}{1.1}\renewcommand{\tabcolsep}{2mm}
\setlength{\doublerulesep}{0.08pt}\begin{tabular}[c]{||c||c||c|c||}
  \multicolumn{1}{c}{$\circ$} & \multicolumn{1}{c}{$\circ$} & \multicolumn{2}{c}{$\otimes$} \\ \hline\hline
  $I_n$&&$\xi I_n$&$I_n$ \\ \hline
  &$I_n$&$A$&$B$ \\ \hline\hline
\end{tabular} where $A$, $B$ are square matrices over $\G$. The problem to find a pair of matrices $(A,B)$, in such a way that $M$ is indecomposable, is reduced to a biquadratic matrix problem\footnote{ This problem was solved in \cite{Zav-07-Kp} in characteristic $\not= 2$, and solved in characteristic $= 2$ in \cite{Zav-08-bqp}}, which is closely related to the semilinear and pseudolinear Kronecker problems.

The series of indecomposable corepresentations $S(X)$, where $X$ is a square matrix over $\G$ of arbitrary order $n\ge 1$, of the critical poset $K_8$ is defined by the matrix (\ref{Series K8(1)})(a) if the characteristic $\car \F =2$ or (\ref{Series K8(1)})(b) if $\car \F \not=2$ and $\F\subset\G$ is a separable extension or (\ref{Series K8(2)}) if $\car \F \not=2$ and $\F\subset\G$ is inseparable. The series describe almost all indecomposable corepresentations of any shown dimension $d$, however $X$ is restricted to a Frobenius Canonical Form block $C(f)$ associated to a non-constant monic polynomial $f(t) = a_0 + a_1t + \cdots + a_{n-1}t^{n-1} + t^n$ over $\G$

If $f(t) = a_0 + a_1t + \cdots + a_{n-1}t^{n-1} + t^n$ is a non-constant monic polynomial over $\G$, its companion matrix of order $n$ is denoted by $C(f) = (c_{ij})$ with $c_{i,i+1} = 1$, $c_{ni} = -a_{i-1}$ and otherwise $c_{ij} = 0$. Let $\G [t, \sigma , \delta ]$ be the skew polinomial ring of right polynomials where $\sigma: \G \mapsto \G$ is an automorphism and $\delta$ is a $\sigma$-right derivation  el anillo de polinomios torcidos que consisten de los polinomios derechos en la variable $t$ sobre $\G$ tal que $at = ta^{\sigma} + a^{\delta}$ para cualquier $a\in\G$, donde $\sigma$ es un automorfismo de $\G$ y $\delta$ es una $\sigma$-derivación sobre $\G$. Se denota por $\mathcal{I}$ un subconjunto maximal de los polinomios indescomponibles no similares dos a dos en $\G [t, \sigma , \delta ]$.

\begin{equation}\label{Series K8(1)}
\renewcommand{\arraystretch}{1.1}\renewcommand{\tabcolsep}{2mm}
\setlength{\doublerulesep}{0.08pt}\begin{tabular}[c]{||c||c||cc||}
  \multicolumn{1}{c}{$\circ$} & \multicolumn{1}{c}{$\circ$} & \multicolumn{2}{c}{$\otimes$} \\ \hline\hline
  $I_n$&&$\xi I_n$&$I_n$ \\
  &$I_n$&$\xi$X&$I_n$ \\ \hline\hline
  \multicolumn{4}{c}{(a)} \\
\end{tabular} \hspace{1cm} \begin{tabular}[c]{||c||c||cc||}
  \multicolumn{1}{c}{$\circ$} & \multicolumn{1}{c}{$\circ$} & \multicolumn{2}{c}{$\otimes$} \\ \hline\hline
  $I_n$&&$\xi I_n$&$I_n$ \\
  &$I_n$&$\overline{\xi}I_n + \xi \overline{X}$&$I_n + \overline{X}$ \\ \hline\hline
    \multicolumn{4}{c}{(b)} \\
\end{tabular}\end{equation} \begin{equation}\label{Series K8(2)} \renewcommand{\arraystretch}{1.1}\renewcommand{\tabcolsep}{2mm}
\setlength{\doublerulesep}{0.08pt} \begin{tabular}[c]{||c||c||cc||}
  \multicolumn{1}{c}{$\circ$} & \multicolumn{1}{c}{$\circ$} & \multicolumn{2}{c}{$\otimes$} \\ \hline\hline
  $I_n$&&$\xi I_n$&$I_n$ \\
  &$I_n$&$\xi [ I_n + \overline{X} ]$&$\overline{X}$ \\ \hline\hline
\end{tabular}
\end{equation}

\end{proof}

\begin{prop}\label{A26A27coreps}
Each of the posets $A_{26}$ and $A_{27}$ and their duals has 6 types of indecomposable corepresentations sincere at the points $\eta$ and $\zeta$ respectively, having the following matrix forms:
\end{prop}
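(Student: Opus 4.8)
The plan is to reuse the differentiation–integration machinery already set up in the proof of Theorem C3 and simply read off, from the resulting lists, those indecomposables that are sincere at the new strong point. Recall that $\eta$ is a maximal strong point of $A_{26}$ sitting above the unique weak point $a$, while $\zeta$ is a maximal strong point of $A_{27}$ sitting above the strong points $\varrho,\sigma$. Accordingly, for $A_{26}$ I would differentiate with respect to the pair $(\varrho,\eta)$ using D-I together with completion, obtaining the derived poset $\overline{A_{26}'}$, and for $A_{27}$ I would differentiate with respect to the suitable pair $(a,\zeta)$ using D-$\seven$, obtaining $A_{27}'$ (both pictured in Figure \ref{Differentiation A26 A27 and A40}). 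The correctness of this reduction rests on the pairs being suitable, which I would verify against the definitions in \cite{Zav-77, Rod-Zav-07}.

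First I would produce the complete list of indecomposable corepresentations of each derived poset. These derived posets are structurally simpler and either fall under the finite-type classification deducible from \cite{Kle-Sim-90} or reduce to posets already handled in this paper (of $A_{25}$, $A_{25}^*$, or $K_6$ type). Then, applying the integration operator $\seven$ (respectively the inverse of D-I) to each such indecomposable $U'$ lifts it to an indecomposable corepresentation $V$ of $A_{26}$ (respectively $A_{27}$), exactly as in case (a)/(b) of the proof of Theorem C3, where $V$ was defined by $V_\eta=\US{U}_a$ and $V_\zeta=U_\varrho+U_\sigma$. The matrix forms claimed in the proposition are precisely the matrices $M_V$ of these lifts.

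Next I would single out, among all lifted indecomposables, exactly those that are sincere at $\eta$ (for $A_{26}$) respectively at $\zeta$ (for $A_{27}$); the remaining lifts either do not touch $\eta/\zeta$ — hence are already accounted for by the classification of $K_8$ or of the $A_{25}$-type posets — or are non-sincere and discarded. An exhaustive check of the finite collection of families should leave precisely $6$ families (each parametrized by $n$), giving the tabulated forms. For the dual posets $A_{26}^*$ and $A_{27}^*$ I would invoke the duality principle (Proposition \ref{duality principle}) together with Lemma \ref{dualsincere}: each sincere indecomposable $V$ of $A_{26}$ dualizes to a sincere indecomposable $V^*$ of $A_{26}^*$, and this assignment is a bijection on isomorphism classes, so the count of $6$ transfers verbatim to the duals.

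The main obstacle I anticipate is the bookkeeping of the integration step. Since D-$\seven$-integration is not single-valued (as the paper itself notes), one must check that each chosen lift is genuinely indecomposable, that it is sincere at $\eta/\zeta$, and that no two lifts coincide up to isomorphism, so that the final count is exactly $6$ and neither more nor fewer. A secondary subtlety, specific to $A_{26}$, is that the completion step inserts the auxiliary split point $\sigma^\pm$, so one must confirm that the completed poset $\overline{A_{26}'}$ and the original $A_{26}$ share the same sincere indecomposables up to the predictable trivial discrepancy, in the spirit of the $(\zeta,\eta)$-completion argument used earlier for $A_{33}$ and $A_{45}$.
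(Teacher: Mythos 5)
There is a genuine gap at the heart of your plan: the claim that the derived posets $\overline{A_{26}'}$ and $A_{27}'$ ``fall under the finite-type classification deducible from \cite{Kle-Sim-90} or reduce to posets already handled (of $A_{25}$, $A_{25}^*$, or $K_6$ type)'' is false. As Figure \ref{Differentiation A26 A27 and A40} shows, $\overline{A_{26}'}$ contains the antichain $\{\varrho,\,\sigma^+,\,a\}$ and $A_{27}'$ contains $\{\varrho,\,\sigma,\,a^-\}$, i.e.\ each derived poset still contains the critical poset $K_8$ (in fact $\overline{A_{26}'}$ contains a copy of $A_{27}$, with $\sigma^-$ playing the role of $\zeta$, and $A_{27}'$ contains a copy of $A_{26}$, with $a^+$ playing the role of $\eta$). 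So the step ``produce the complete list of indecomposable corepresentations of each derived poset'' cannot be discharged by appealing to finite-type results or to the $K_6$-family classifications of Section \ref{sectionK6}; it is exactly as hard as the problem you are trying to solve, and your induction has no base.

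What the paper actually does is different in two essential ways. First, types $1$--$5$ of $A_{26}$ (resp.\ $A_{27}$) are not obtained by integration at all: they are one-point extensions $_{n}(K_8 - i)^{\eta}$ (resp.\ $_{n}(K_8 - i)^{\zeta}$) of indecomposable corepresentations of $K_8$, via the direct construction $U^{\zeta}$ of Section \ref{sectionK6} (set $V_{\eta} \supsetneq \US{U}_a$ of codimension-one cokernel, etc.); you conflate this extension with the integration step of Theorem C3's proof. Second, only the remaining type $6$ comes from integration, and it integrates a corepresentation of the copy of $A_{27}$ sitting inside $\overline{A_{26}'}$ (resp.\ of $A_{26}$ inside $A_{27}'$): $_{2n+2}(A_{26}-6) = \Int\,_{2n+1}(A_{27}-2)$ and $_{2n+2}(A_{27}-6) = \Int\,_{2n+1}(A_{26}-4)$. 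Thus the classifications of $A_{26}$, $A_{27}$ and $K_8$ are established by a mutual recursion (Theorem C3's proof in turn restricts $A_{26}$-, $A_{27}$- and $A_{40}$-corepresentations back to $K_8$), which is well-founded because each integration step strictly increases the dimension $d_0$, so one inducts on dimension starting from the finite-type initial corepresentations such as $(F_{3}-B)$ and $(F_{14}-C)$. Your proposal, as written, treats the reduction as one-way (to strictly simpler posets) and therefore never terminates; to repair it you would need to recognize the $K_8$-extensions as the source of five of the six types and set up the dimension induction across the pair $A_{26} \leftrightarrow A_{27}$ explicitly. The duality part of your argument (Lemma \ref{dualsincere} plus the duality principle) is consistent with the paper, which additionally records the duals concretely as minimal-point extensions $_{n}(K_8 - i)_{\eta}$, $_{n}(K_8 - i)_{\zeta}$.
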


\begin{table}
\renewcommand{\arraystretch}{1.2}
\renewcommand{\tabcolsep}{1.5mm}
\setlength{\doublerulesep}{0.14pt}
\setlength{\arrayrulewidth}{0.2pt}
\begin{tabular}{|rcl|rcl|}\hline
$_{2n}(A_{26}-1)  $ & = & $_{2n}(K_8 - 1^*)^{\eta}    $ & $_{2n}(A_{26}^*-1)  $ & = & $ _{2n}(K_8 - 1)_{\eta}     $\\
$_{2n-1}(A_{26}-2)$ & = & $_{2n-1}(K_8 - 3^*)^{\eta}  $ & $_{2n-1}(A_{26}^*-2)$ & = & $ _{2n-1}(K_8 - 3)_{\eta}   $\\
$_{2n}(A_{26}-3)  $ & = & $_{2n}(K_8 - 5)^{\eta}      $ & $_{2n}(A_{26}^*-3)  $ & = & $ _{2n}(K_8 - 5)_{\eta}     $\\
$_{2n-1}(A_{26}-4)$ & = & $_{2n-1}(K_8 - 8^*)^{\eta}  $ & $_{2n-1}(A_{26}^*-4)$ & = & $ _{2n-1}(K_8 - 8)_{\eta}   $\\
$_{2n-1}(A_{26}-5)$ & = & $_{2n-1}(K_8 - 10)^{\eta}   $ & $_{2n-1}(A_{26}-5)^*$ & = & $ _{2n-1}(K_8 - 10^*)_{\eta}$\\ \hline
$_{2}(A_{26}-6)$    & = & $(F_{3}-B)$                   &                       &   &                              \\
$_{2n+2}(A_{26}-6)$ & = & $\Int \, _{2n+1}(A_{27}-2)  $ & $_{2n+2}(A_{26}^*-6)$ & = & $      {2n+2}(A_{26}-6)^*   $\\ \hline
\end{tabular}
\caption{Classification of indecomposables of $A_26$}\label{List of coreps of A26}
\end{table}

\begin{prop}\label{A40coreps}
Each of the posets $A_{40}$ and $A^*_{40}$ has 15 types of indecomposable corepresentations sincere at the point $p$, recursively defined in the following form
\end{prop}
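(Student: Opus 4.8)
The plan is to follow the same differentiation–integration scheme already used for $A_{26}$ and $A_{27}$, but now routed through the third extension case of the proof of Theorem C3. As recorded in Figure \ref{Differentiation A26 A27 and A40}, the pair $(q,\sigma)$ is $\seven$-suitable for $A_{40}^*$, whose weak point $q$ is exactly the point added to $K_8$. I would therefore apply the algorithm D-$\seven$ of \cite{Rod-Zav-07} and pass to the derived poset $(A_{40}^*)'$, whose six points $\varrho,\sigma,q^{+}$ (strong) and $a^{+},a^{-},q^{-}$ (weak) are the ones drawn there. The structural point is that integration $\seven$ sets up a correspondence between the indecomposable corepresentations of $(A_{40}^*)'$ (apart from the finitely many exceptional ones attached to the suitable pair) and those of $A_{40}^*$, and the corepresentations of $A_{40}^*$ sincere at the weak point $q$ are precisely those produced by genuine integration across this pair.

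First I would classify $\ind (A_{40}^*)'$. Since the derived poset is assembled from pieces already treated, its indecomposables can be read off from the classifications established earlier (those of $K_8$ and of the finite posets $F_i$), together with the ordinary differentiation/completion bookkeeping; the non-finite behaviour contributes only the series already exhibited for $K_8$. Next I would integrate each of these back to $A_{40}^*$ via $\seven$, recording $\dimc$ at each step, and keep precisely those lifts whose support meets $q$. Carrying this out exhaustively should yield the list of $15$ types displayed in the accompanying table, each sincere at $q$.

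Finally, the classification for $A_{40}$ follows from that of $A_{40}^*$ by the duality principle. By Proposition \ref{duality principle} and Lemma \ref{dualsincere}, the assignment $U\mapsto U^{*}$ sends a sincere indecomposable corepresentation of $A_{40}^*$ to a sincere indecomposable corepresentation of $(A_{40}^*)^{*}=A_{40}$ and is an involution, so the two posets carry the same number of sincere types; one then matches the dual pairs (recording any self-dual ones) to transfer the table from $A_{40}^*$ to $A_{40}$.

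The main obstacle is the bookkeeping in the integration step: showing that exactly $15$ of the lifted corepresentations are sincere at $q$, neither more nor fewer, and confirming completeness of the derived poset's own classification. Concretely, for each indecomposable of $(A_{40}^*)'$ one has to decide from its explicit matrix form whether integration $\seven$ places a nonzero block in the stripe $q$, and to verify that distinct lifts stay pairwise non-isomorphic. This is routine but lengthy, and it is precisely where the count of $15$ is pinned down.
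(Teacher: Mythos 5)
Your proposal follows essentially the same route as the paper: Proposition \ref{A40coreps} is there obtained as a byproduct of the proof of Theorem C3, by applying D-$\seven$ to $A_{40}^*$ with respect to the suitable pair $(q,\sigma)$ (Figure \ref{Differentiation A26 A27 and A40}) and exhaustively integrating the indecomposables of the derived poset, which reduces recursively to $K_8$ and to lower-dimensional corepresentations of $A_{40}^*$ itself, yielding the 15 types of Table \ref{List of coreps of A40 dual}. The only cosmetic difference is that the paper also writes several types of $A_{40}$ directly as extensions $(\cdot)^q$ of $K_8$-corepresentations, whereas you transfer the whole table by duality via Lemma \ref{dualsincere}; both devices are used throughout the paper and the substance is identical.
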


\begin{table}
\renewcommand{\arraystretch}{1.2}
\renewcommand{\tabcolsep}{1.5mm}
\setlength{\doublerulesep}{0.14pt}
\setlength{\arrayrulewidth}{0.2pt}
\begin{tabular}{|rcl|rcl|}\hline
$_1(A_{40}^*-1) $ & = & $F_{17}     $ & $_{2n+1}(A_{40}^*-1) $ & = & $ \Int \, _{2n}(K_{8} - 1)     $\\
$_2(A_{40}^*-2) $ & = & $F_{15} - D $ & $_{2n+2}(A_{40}^*-2) $ & = & $ \Int \, _{2n+1}(A_{40}^*-1)  $\\
$_2(A_{40}^*-3) $ & = & $F_{15} - A $ & $_{2n+2}(A_{40}^*-3) $ & = & $ \Int \, _{2n+1}(K_{8} - 2)   $\\
$_3(A_{40}^*-4) $ & = & $F_{15} - F $ & $_{2n+3}(A_{40}^*-4) $ & = & $ \Int \, _{2n+1}(K_{8} - 3)   $\\
$_1(A_{40}^*-5) $ & = & $F_{13} - A $ & $_{2n+1}(A_{40}^*-5) $ & = & $ \Int \, _{2n}(K_{8} - 4)     $\\
$_2(A_{40}^*-6) $ &   &               & $_{2n}(A_{40}^*-6)   $ & = & $ \Int \, _{2n-1}(K_{8} - 7)   $\\
$_1(A_{40}^*-7) $ & = & $F_{14} - A $ & $_{2n+1}(A_{40}^*-7) $ & = & $ \Int \, _{2n}(A_{40}^*-6)    $\\
$_1(A_{40}^*-8) $ & = & $F_{13} - B $ & $_{2n+1}(A_{40}^*-8) $ & = & $ \Int \, _{2n-1}(K_{8} - 8)   $\\
$_3(A_{40}^*-9) $ & = & $F_{15} - G $ & $_{2n+3}(A_{40}^*-9) $ & = & $ \Int \, _{2n+1}(A_{40}^*-8)  $\\
$_2(A_{40}^*-10)$ & = & $F_{14} - C $ & $_{2n+2}(A_{40}^*-10)$ & = & $ \Int \, _{2n}(K_{8} - 9)     $\\
$_2(A_{40}^*-11)$ &   &               & $_{2n}(A_{40}^*-11)  $ & = & $ \Int \, _{2n-1}(A_{40}^*-13) $\\
$_2(A_{40}^*-12)$ & = & $F_{15} - C $ & $_{2n+2}(A_{40}^*-12)$ & = & $ \Int \, _{2n}(A_{40}^*-11)   $\\
$_1(A_{40}^*-13)$ & = & $F_{15} - B $ & $_{2n+1}(A_{40}^*-13)$ & = & $ \Int \, _{2n}(A_{40}^*-12)   $\\
$_2(A_{40}^*-14)$ & = & $F_{15} - E $ & $_{2n+2}(A_{40}^*-14)$ & = & $ \Int \, _{2n}(A_{40}^*-14)   $\\
$_1(A_{40}^*-15)$ & = & $F_{14} - B $ & $_{2n+1}(A_{40}^*-15)$ & = & $ \Int \, _{2n-1}(A_{40}^*-15) $\\ \hline
\end{tabular}
\caption{Classification of indecomposables of $A_{40}^*$, $n\ge 1$.}\label{List of coreps of A40 dual}
\end{table}

\begin{equation}
\renewcommand{\arraystretch}{1.2}
\renewcommand{\tabcolsep}{1.5mm}
\setlength{\doublerulesep}{0.14pt}
\setlength{\arrayrulewidth}{0.2pt}
\begin{tabular}{|rcl|rcl|}\hline
$ _{2n}(A_{40}-2)  $ & = &  $_{2n}(K_{8}-1^*)^q $               & $_{2n}(A_{40}-3)  $ & = & $_{2n}(K_{8}-4)^q     $ \\
$ _{2n+1}(A_{40}-4)$ & = &  $_{2n+1}(K_{8}-\widetilde{3}^*)^q$  & $_{2n-1}(A_{40}-5)$ & = & $_{2n-1}(K_{8}-2)^q   $ \\
$ _{2n-1}(A_{40}-7)$ & = &  $_{2n-1}(K_{8}-7^*)^q$              & $_{2n-1}(A_{40}-8)$ & = & $_{2n-1}(K_{8}-8^*)^q $ \\
$ _{2n}(A_{40}-10) $ & = &  $_{2n}(K_{8}-9^*)^q$                &                     &   &                         \\ \hline
\end{tabular}
\end{equation}

\begin{lemma}
If $\p$ is an equipped poset which coincides with $A_{40}$ (as in table \ref{listPosets}) then each indecomposable corepresentation (probably not sincere) $U$ of $\p$ satisfies the following conditions
\begin{enumerate}
\item[(a)] $U_a = 0$ or $\US{U}_a + U_{\sigma} = U_0$
\item[(b)] $U_{\rho} = 0$ ,or , $U_q + U_{\sigma} = U_q + \US{U}_{a} = U_0$ and $\US{U}_q = U_0$
\end{enumerate}
\end{lemma}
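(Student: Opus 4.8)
The plan is to read the two assertions as ``no extra room'' statements forced by indecomposability, and to prove them by combining one direct splitting argument (which disposes of the purely $\G$-linear part of (b)) with a short type-by-type inspection of the explicit classification (which handles the mixed $\F/\G$ equalities). Recall the shape of $A_{40}$: the weak point $q$ is the unique maximal point, with $a\prec q$ and $\varrho\lhd q$, while $\sigma$ is an isolated strong point; in particular $A_{40}\setminus q=K_8=\{\varrho,\sigma,a\}$. I would therefore split the indecomposables $U$ of $A_{40}$ into two families: those sincere at $q$, which are exactly the corepresentations listed in Proposition \ref{A40coreps} (and, dually, Table \ref{List of coreps of A40 dual}), and those with $d_q=0$, for which $U_q=\underline{U_q}=U_a+\US{U}_\varrho$ is forced, so that $U$ is determined by its restriction to $K_8$ and is covered by the classification of Theorem C3 (the restriction is indecomposable if and only if $U$ is).

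First I would isolate the $\G$-linear content of (b), namely the claim that $U_\varrho\neq 0$ implies $\US{U}_q=U_0$. Every point other than $\sigma$ lies below $q$, so $U_\varrho,U_a,U_q\subset \US{U}_q=:W$, and only $U_\sigma$ may fail to be contained in $W$. If $W\subsetneq U_0$, choose a $\G$-complement $U_0=W\oplus C$ with $C\neq 0$; since the maximal points are $q$ and $\sigma$ the span of all spaces is $U_0$, whence $U_\sigma+W=U_0$ and $U_\sigma$ surjects onto $U_0/W$, yielding a nonzero $\G$-subspace $C'\subset U_\sigma$ with $U_0=W\oplus C'$. Then $U$ splits as $U'\oplus U''$, where $U''=(C',\ \sigma\mapsto C',\ \text{else }0)$ is a nonzero trivial corepresentation at the strong point $\sigma$ and $U'=(W,\,U_x\cap W)$; indecomposability forces $U'=0$, i.e. $W=\US{U}_q=0$, hence $U_\varrho\subset U_q=0$, contradicting $U_\varrho\neq 0$. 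The same maximal-point principle gives the ambient identity $\US{U}_q+U_\sigma=U_0$, which frames the remaining equalities.

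It remains to prove the mixed $\F/\G$ equalities: condition (a) in full, and the two identities $U_q+U_\sigma=U_0$ and $U_q+\US{U}_a=U_0$ of (b). These cannot be obtained by splitting off a $\G$-subspace, since they assert that an $\F$-span together with a $\G$-subspace already exhausts the $\G$-space $U_0$ --- precisely the mixed phenomenon governing the biquadratic/Kronecker problem attached to $K_8$ and $A_{40}$. Here I would argue by inspection, exactly as for Lemma \ref{properties of Some COREPS}: for $U$ sincere at $q$ I read the four blocks $U_a,U_\sigma,U_q,U_\varrho$ off the normal forms of Proposition \ref{A40coreps}, and for $d_q=0$ off the eleven types (and the series) of Theorem C3 through the reduction $U_q=U_a+\US{U}_\varrho$, verifying (a) and (b) case by case. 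I expect this last step to be the main obstacle: there is no uniform algebraic shortcut for the $\F$-span equalities, so the argument relies on the completeness of the two lists and on the bookkeeping that the non-sincere-at-$q$ indecomposables are genuinely exhausted by the $K_8$-corepresentations; once both lists are in hand, (a) and (b) are immediate from the displayed matrices.
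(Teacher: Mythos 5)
Your proposal is sound, but note that the paper contains no proof of this lemma whatsoever: it is stated bare, immediately after the classification data for $A_{40}$ and $A_{40}^*$, and by the paper's convention for statements of this kind (compare Lemma \ref{properties of Some COREPS}, justified only by ``checking exhaustively the classification'', and Lemma A1 of the appendix, which ``may be checked inspecting the classification'') the intended argument is exhaustive inspection of the classified indecomposables. Your route has that inspection as its last step but adds two things the paper leaves implicit. First, since Proposition \ref{A40coreps} and Table \ref{List of coreps of A40 dual} only cover indecomposables sincere at $q$, while the lemma quantifies over all indecomposables (``probably not sincere''), one must also treat the case $d_q=0$; your reduction via the forced equality $U_q=\underline{U_q}=U_a+\US{U}_{\varrho}$ to the $K_8$ classification of Theorem C3 --- with the observation that this correspondence preserves and reflects indecomposability, and that the $K_8$ series (\ref{Series K8(1)}), (\ref{Series K8(2)}) must be included in the check --- is exactly the bookkeeping needed to make the inspection complete. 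Second, your splitting argument for the clause $\US{U}_q=U_0$ in (b) is correct and classification-free: every point of $A_{40}$ other than $\sigma$ lies below $q$, and because $U_{\sigma}$ is a $\G$-subspace, a $\G$-complement $C'$ of $U_{\sigma}\cap\US{U}_q$ inside $U_{\sigma}$ gives $U_0=\US{U}_q\oplus C'$ and a direct decomposition of $U$ with a summand supported only at $\sigma$, so indecomposability together with $U_{\varrho}\neq 0$ forces $\US{U}_q=U_0$. That argument buys robustness, since it does not depend on the completeness of the lists, whereas the paper's implicit inspection, once the lists are granted, is shorter. Two small caveats: your tacit claim that the hulls of all subspaces of an indecomposable span $U_0$ fails exactly for the trivial corepresentation, which is harmless here because $U_{\varrho}\neq 0$ excludes it; and for the genuinely mixed $\F$/$\G$ equalities --- condition (a), and $U_q+U_{\sigma}=U_q+\US{U}_a=U_0$ in (b) --- no such shortcut is available, so there the dependence on the classification is, as you anticipate, unavoidable.
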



\begin{equation}
\renewcommand{\arraystretch}{1.2}
\renewcommand{\tabcolsep}{1.5mm}
\setlength{\doublerulesep}{0.14pt}
\setlength{\arrayrulewidth}{0.2pt}
\begin{tabular}{|rcl|rcl|}\hline
$_{2n-1}(A_{27}-1)$ & = & $_{2n-1}(K_8 - 7^*)^{\zeta}   $ & $_{2n-1}(A_{27}^*-1)$ & = & $ _{2n-1}(K_8 - 7)_{\zeta}  $\\
$_{2n-1}(A_{27}-2)$ & = & $_{2n-1}(K_8 - 8^*)^{\zeta}   $ & $_{2n-1}(A_{27}^*-2)$ & = & $ _{2n-1}(K_8 - 8)_{\zeta}  $\\
$_{2n}(A_{27}-3)  $ & = & $_{2n}(K_8 - 9^*)^{\zeta}     $ & $_{2n}(A_{27}^*-3)  $ & = & $ _{2n}(K_8 - 9)_{\zeta}    $\\
$_{2n-1}(A_{27}-4)$ & = & $_{2n-1}(K_8 - 10^*)^{\zeta}  $ & $_{2n-1}(A_{27}^*-4)$ & = & $ _{2n-1}(K_8 - 10)_{\zeta} $\\
$_{2n}(A_{27}-5)  $ & = & $_{2n}(K_8 - 11)^{\zeta}      $ & $_{2n}(A_{27}^*-5)  $ & = & $ _{2n}(K_8 - 11)_{\zeta}   $\\ \hline
$_{2}(A_{27}-6)$    & = & $(F_{14}-C)$                    &                       &   &                              \\
$_{2n+2}(A_{27}-6)$ & = & $\Int \, _{2n+1}(A_{26}-4)    $ & $_{2n+2}(A_{27}^*-6)$ & = & $      {2n+2}(A_{27}-6)^*   $\\ \hline
\end{tabular}
\end{equation}


\begin{prop}\label{A30'scoreps}
Each of the posets $A_{30}$, $A_{31}$, $A_{32}$, $A_{35}$, $A_{36}$, $A_{37}$, $A_{47}$ and their duals has 1 type of indecomposable corepresentations sincere at all its strong points having the following forms:
\end{prop}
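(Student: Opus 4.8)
The plan is to handle the seven posets (and their duals) by the same recursive scheme already used for $A_{26}$, $A_{27}$ and $A_{40}^*$ in Propositions~\ref{A26A27coreps} and~\ref{A40coreps}: since each of $A_{30},\dots,A_{37},A_{47}$ is obtained from the critical poset $K_8$ by adjoining strong points, the idea is to strip off a maximal (or minimal) strong point $\zeta$, recognize $\q=\p\setminus\zeta$ as one of the already-classified posets $K_8$, $A_{26}$, $A_{27}$, $A_{40}$ (or a dual), and build the indecomposables of $\p$ from those of $\q$. First I would go through the seven shapes and, for each, decide which tool applies: a point comparable to the weak point gives a $\seven$-suitable pair for D-$\seven$, whereas a pair among the strong points gives a D-I-suitable pair (possibly after an ordinary completion). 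For any poset admitting no directly suitable pair, I would first perform a $(\zeta,\eta)$-completion in the sense of \cite{Zav-05-TP}, strengthening a weak relation so that the completed poset coincides, up to the trivial corepresentation, with one already treated; this is precisely the device that identified $A_{33}$ with $A_{45}$ and $A_{41}$ with $A_{46}$, and I expect $A_{47}$ to collapse onto one of the $A_{30}$--$A_{37}$ family in the same manner.

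Having fixed the new maximal strong point $\zeta$ for a given $\p$, the candidate sincere indecomposables are the extensions $U^{\zeta}$ (respectively $U_{\zeta}$ for a minimal point) of indecomposables $U$ of $\q$, using the direct construction described before Table~\ref{Example of corepresentation extended}; equivalently these arise by integration $\seven$ or D-I from the derived poset $\p'$, the two descriptions coinciding exactly as noted for $K_6$ and $A_{25}$. Running through the catalogued indecomposables of $\q$, extending each one that admits the construction (i.e.\ with $\US{U}_{\mathcal{R}}\neq U_0$ or $\UI{U}_{\mathcal{R}}\neq 0$ for the relevant comparable subset $\mathcal{R}$), and discarding the summands that stay non-sincere, I would read off the classification, writing each type in the notation ${}_{d_0}(\p-m)=\Int_{d_0'}(\q-n)$ or as an extension ${}_n(K_8-k)^{\zeta}$ exactly as in the tables for $A_{26}$ and $A_{27}$.

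Next I would establish the sincerity/uniqueness claim. The key point is that extension by a strong point changes only the stripe at $\zeta$ and leaves the supports at the old points unchanged, so an extended corepresentation is sincere at every old strong point precisely when $U$ already was, and the single extra condition $\dim_{\G}U_{\zeta}/\US{U}_{\mathcal{R}}=1$ (resp.\ $\dim_{\G}U_{\zeta}=1$) pins down sincerity at $\zeta$. Tracing these conditions back through the chain of reductions, the simultaneous demand of sincerity at all strong points cuts the admissible $K_8$-indecomposables down to a single family, which yields \emph{exactly one type}. For the dual posets I would invoke Lemma~\ref{dualsincere} and the duality principle (Proposition~\ref{duality principle}), so that the classification of $\p^*$ is obtained as $(\p-m)^*$, with the self-dual cases recorded separately.

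The hard part will be the sincerity bookkeeping rather than the reductions themselves: for each of the seven shapes one must check exhaustively that all but one of the extended/integrated indecomposables fail to be sincere at some strong point, and that the completions introduce no spurious sincere corepresentations beyond the trivial one that is discarded. Organizing the seven plus dual verifications so that the completions merge several shapes into a single computation — as happened with $A_{33}/A_{45}$ — is exactly what makes the uniform assertion of a single type across all of them tenable; the main risk is a poset for which two distinct families survive the sincerity filter, which would have to be ruled out by the exhaustive inspection of the source tables.
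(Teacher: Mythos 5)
Your proposal follows essentially the same route as the paper: the paper (implicitly, via the table accompanying the proposition and the introductory remarks of Section~\ref{sectionK8}) obtains each sincere type as an extension ${}_n(A_{26}-i)^{\theta}$, ${}_n(A_{27}-i)^{\theta}$, ${}_n(A_{26}^*-i)_{\zeta}$, etc., of the already-classified indecomposables of $A_{26}$, $A_{27}$ and their duals by the adjoined strong point, with duality (Lemma~\ref{dualsincere}) handling the dual posets and exhaustive inspection giving uniqueness of the sincere type. Your only minor deviation is the suggestion that $A_{47}$ might need a completion, whereas the paper treats it by the same direct extension, ${}_{2n-1}(A_{47})={}_{2n-1}(A_{26}^*-5)^{\zeta}$; this does not affect the correctness of your argument.
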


\begin{equation}
\renewcommand{\arraystretch}{1.2}
\renewcommand{\tabcolsep}{1.5mm}
\setlength{\doublerulesep}{0.14pt}
\setlength{\arrayrulewidth}{0.2pt}
\begin{tabular}{|rcl|rcl|rcl|}\hline
$_{2n}(A_{30})$ & = & $_{2n}(A_{26} - 6)^{\theta}$ & $_{2n-1}(A_{31})$ & = & $ _{2n-1}(A_{26} - 4)^{\zeta} $ & $_{2n}(A_{32})$ & = & $ _{2n}(A_{27} - 6)^{\theta}$ \\
$_{2n}(A_{30}^*)$ & = & $_{2n}(A_{26}^* - 6)_{\theta}$ & $_{2n-1}(A_{31}^*)$ & = & $_{2n-1}(A_{26}^* - 4)_{\zeta}$ & $_{2n}(A_{32}^*)$ & = & $ _{2n}(A_{27}^* - 6)_{\theta}$ \\
$_{2n-1}(A_{35})$ & = & $ _{2n-1}(A_{26} - 5)_{\zeta}$ & $_{2n}(A_{36})$ & = & $_{2n}(A_{26}^* - 3)^{\theta}$ & $_{2n}(A_{37})$ & = & $ _{2n}(A_{27}^* - 5)^{\theta}$ \\
$_{2n-1}(A_{47})$ & = & $_{2n-1}(A_{26}^* - 5)^{\zeta}$ & $_{2n}(A_{36}^*)$ & = & $_{2n}(A_{26} - 3)_{\theta}$ & $_{2n}(A_{37}^*)$ & = & $ _{2n}(A_{27} - 5)_{\theta}$ \\ \hline
\end{tabular}
\end{equation}

\begin{lemma}
Let $\p$ be an equipped poset which coincides with one of the sets $K_8$, $A_{26}$, $A_{27}$, $A_{30}$, $A_{31}$ or $A_{32}$ (denoted as in table \ref{listPosets}). Then, each indecomposable corepresentation (probably not sincere) $U$ of $\p$ satisfies the following conditions
\begin{enumerate}
\item[(a)] $U_a = 0$ or $\US{U}_a + U_{\rho} = \US{U}_a + U_{\sigma} = U_0$
\item[(b)] $U_r = 0$ or $\begin{cases} U_a + U_{\rho} + U_{\sigma} = U_0 \,\, \text{for any}\, \p \\ \US{U}_a + U_{\rho} = U_0\,\, \text{for}\, \p = K_8, A_{27}, A_{32} \end{cases}$
\end{enumerate}
\end{lemma}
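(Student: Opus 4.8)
The plan is to deduce the lemma directly from the explicit classifications of indecomposable corepresentations already obtained for the posets in question --- Theorem C3 for $K_8$, Proposition~\ref{A26A27coreps} for $A_{26}$ and $A_{27}$, and Proposition~\ref{A30'scoreps} for $A_{30}$, $A_{31}$, $A_{32}$ --- in exactly the manner of \lemref{properties of Some COREPS}. Conditions (a) and (b) involve only the ambient $\G$-space $U_0$ and the subspaces of $U_0$ sitting at the weak point $a$, the strong points $\varrho$, $\sigma$, and the distinguished maximal strong point $r$ fixed for each poset in Table~\ref{listPosets}. Hence, once a matrix realization $M=M_U$ is chosen, every clause of (a)--(b) becomes a rank condition on the stripes $M_a$, $M_\varrho$, $M_\sigma$, $M_r$ that can be read off by inspection.

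Because the statement speaks of all indecomposables and not merely the sincere ones, I would organize the verification by support. Any non-sincere indecomposable of $\p$ is the zero-extension (\secref{sectionK6}) of a sincere indecomposable of a proper full subposet $\q\subsetneq\p$; since $a$, $\varrho$, $\sigma$ are minimal in $\p$, each of them lying outside $\q$ contributes a zero space, and the full subposets of $\p$ still carrying these points are again among $K_8$, $A_{26}$, $A_{27}$ or are of finite corepresentation type, whose indecomposables are known from \cite{Kle-Sim-90}. Thus the indecomposables of $A_{26}$ and $A_{27}$ consist of their $r$-sincere types (listed in Proposition~\ref{A26A27coreps}) together with the $K_8$-indecomposables, and those of $A_{30}$, $A_{31}$, $A_{32}$ of their single sincere type together with the indecomposables of $A_{26}$ or $A_{27}$; the whole problem therefore reduces to a finite inspection, arranged as an induction on $|\p|$ with base $\p=K_8$.

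For $K_8$, and then for each sincere type of $A_{26},\dots,A_{32}$ together with the series of $K_8$ given by (\ref{Series K8(1)}) and (\ref{Series K8(2)}), I would write out the matrix form and check: for (a), that as soon as $M_a\neq 0$ the $\G$-span of $M_a$ fills $U_0$ both modulo $U_\varrho$ and modulo $U_\sigma$, i.e. $\US{U}_a+U_\varrho=\US{U}_a+U_\sigma=U_0$; and for (b), that as soon as $M_r\neq 0$ one has $U_a+U_\varrho+U_\sigma=U_0$ over $\F$, together with the extra equality $\US{U}_a+U_\varrho=U_0$ in the cases $\p=K_8,A_{27},A_{32}$. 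Duality (Proposition~\ref{duality principle} and \lemref{dualsincere}) roughly halves the work, since each clause dualizes to a clause on $U^*$ and the posets occur in dual pairs or are self-dual.

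The principal difficulty is not any single rank computation but two points of care. First, condition (b) is stated with the $\F$-span $U_a+U_\varrho+U_\sigma$, which is in general strictly contained in the $\G$-hull $\US{U}_a+U_\varrho+U_\sigma$; hence (b) is \emph{not} a formal consequence of (a) and must be confirmed on each matrix form --- for the series it holds simply because $U_\varrho+U_\sigma=U_0$ already, but for the discrete types with $U_a\neq 0$ it has to be checked individually. Second, one must guarantee completeness of the support reduction, so that every non-sincere indecomposable genuinely descends to a poset already inspected and none escapes either the inductive step or the finite list of sincere types.
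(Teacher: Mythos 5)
Your proposal is correct and follows essentially the same route as the paper: the paper gives no separate argument for this lemma, relying (as it does explicitly for the analogous \lemref{properties of Some COREPS}) on checking the conditions exhaustively against the classification lists of Theorem C3 and Propositions \ref{A26A27coreps} and \ref{A30'scoreps}, including the series (\ref{Series K8(1)})--(\ref{Series K8(2)}), with non-sincere indecomposables handled as extended corepresentations of subposets. Your additional organization by support, the induction on $|\p|$, the use of duality, and the warning that (b) is not a formal consequence of (a) are sensible refinements of that same exhaustive verification, not a different method.
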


\begin{prop}\label{A43'scoreps}
Each of the posets $A_{43}$, $A_{44}$ and $A_{48}$ has 1 type of indecomposable corepresentations sincere at all its weak points having the following forms:
\begin{align*}
_{2}(A_{43}) &=\, F_{18} & _{2n+2}(A_{43}) &=\, \Int \, _{2n+1}(A_{40} - 5) \\
_{1}(A_{44}) &=\, F_{17} & _{2n+1}(A_{44}) &=\, \Int \, _{2n}(A_{40} - 3)   \\
             &           & _{2n+1}(A_{48}) &=\, \Int \, _{2n}(A_{40} - 3)
\end{align*} for $n\ge 1$.
\end{prop}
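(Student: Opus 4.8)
The plan is to reuse the differentiation–integration scheme of this section, reducing the classification in each case to the already–settled one for $A_{40}$ (Proposition~\ref{A40coreps}). First I would deal with $A_{43}$ and $A_{44}$ directly. Each contains the critical poset $K_8$ and admits a $\seven$-suitable pair of points, so that the algorithm D-$\seven$ of \cite{Rod-Zav-07} applies; the point of the choice is that the corresponding derived poset carries $A_{40}$ as a subposet. Thus, if $U$ is an indecomposable corepresentation sincere at all weak points, one restricts $U$ to the derived poset, decomposes it into a direct sum, and reads off the summands to conclude that $U$ is obtained by integration, $U=\Int V$, from an indecomposable corepresentation $V$ whose support lies in $A_{40}$. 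The candidates for $V$ are precisely the fifteen types of $A_{40}$ classified in Proposition~\ref{A40coreps}, together with their non-sincere specialisations.

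Next I would integrate each candidate and test sincerity at the weak points, exactly in the style of the proofs of Propositions~\ref{Classification of A38} and \ref{A42coreps}. The content of that test is that only one candidate survives per poset: integrating $_{2n+1}(A_{40}-5)$ gives the unique sincere type $_{2n+2}(A_{43})$, integrating $_{2n}(A_{40}-3)$ gives $_{2n+1}(A_{44})$, and every other integration fails to be sincere at some weak point. Here the structural lemmas of this section---the constraints $U_a=0$ or $\US{U}_a+U_{\varrho}=\US{U}_a+U_{\sigma}=U_0$, and their analogues for $U_{\varrho}$ and $U_q$---do the bookkeeping, discarding the non-sincere summands before integration. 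The recursions start from the finite–type corepresentations $_2(A_{43})=F_{18}$ and $_1(A_{44})=F_{17}$ of Appendix~\ref{finitetypeposets}; since integration $\seven$ preserves indecomposability, running over the classified $A_{40}$-family for $n\ge 1$ produces in each case a single indecomposable type, as claimed.

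Finally, $A_{48}$ is obtained from $A_{44}$ by completion in the sense of \cite{Zav-05-TP, Rod-Zav-07}, that is, by strengthening the appropriate weak relation to a strong one. Just as for the pair $A_{41}$, $A_{46}$ in Proposition~\ref{Classification of A41 and A46}, completion leaves the sincere indecomposables unchanged apart from a single trivial corepresentation; hence $A_{44}$ and $A_{48}$ carry the same sincere family and $_{2n+1}(A_{48})=\Int\, _{2n}(A_{40}-3)$ follows at once. Whenever the poset at hand is not self-dual, the dual assertion is supplied by the duality principle (Proposition~\ref{duality principle}) together with Lemma~\ref{dualsincere}.

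I expect the genuine obstacle to be the exhaustive sincerity check of the second paragraph: one must go through all integrations of the fifteen $A_{40}$-types, confirm that precisely one integrates to a corepresentation sincere at every weak point, and verify that the resulting family is indecomposable and pairwise non-isomorphic as $n$ varies. Everything else is a routine application of D-$\seven$ with integration, and of completion, but this combinatorial verification is where the argument must actually be carried out.
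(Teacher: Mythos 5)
Your proposal is correct and coincides with the paper's (largely implicit) argument: the paper states this proposition with no separate proof, relying on exactly the scheme you describe --- D-$\seven$ differentiation/integration reducing $A_{43}$ and $A_{44}$ to the already classified $A_{40}$ types (this is precisely what the notation $\Int\,{}_{2n+1}(A_{40}-5)$ and $\Int\,{}_{2n}(A_{40}-3)$ encodes, per the convention fixed in Section~\ref{sectionK6}), with $A_{48}$ obtained from $A_{44}$ by completion in parallel with the $A_{41}$/$A_{46}$ case of Proposition~\ref{Classification of A41 and A46}, which is why both carry the identical recursion. You also correctly locate the substantive work in the exhaustive integration-and-sincerity check over the fifteen $A_{40}$ types, a verification the paper likewise leaves to the reader.
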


\section{Classification of indecomposable corepresentations of $K_9$}\label{sectionK9}

\begin{figure}[h]
\centering
\begin{picture}(250,100)

\multiput(0.35,2)(0,30){2}{$\otimes$}
\put(5,8.8){\line(0,1){22.3}}
\multiput(32,2)(0,30){2}{$\circ$}
\put(35,7){\line(0,1){25.9}}

\put(62,40){D-$\seven$}
\put(60,35){\vector(1,0){40}}
\put(65,25){$(a,\sigma)$}

\put(112,32){$\circ$}
\multiput(140.35,2)(30,30){2}{$\otimes$}
\put(140.35,62){$\otimes$}
\multiput(202,62)(30,-30){2}{$\circ$}
\put(147.8,7.8){\line(1,1){24.4}}
\put(147.8,62.2){\line(1,-1){24.4}}
\put(116.5,36.5){\line(1,1){25.7}}
\put(116.5,33.5){\line(1,-1){25.7}}
\put(206.5,63.5){\line(1,-1){26.9}}
\put(177.8,37.8){\line(1,1){25.7}}

\put(10,33){{\footnotesize $p$}}
\put(10,3){{\footnotesize  $a$}}
\put(40,33){{\footnotesize  $\sigma$}}
\put(40,3){{\footnotesize  $\rho$}}

\put(150,63){{\footnotesize $p^+$}}
\put(120,33){{\footnotesize  $a^+$}}
\put(240,33){{\footnotesize  $\rho$}}
\put(210,63){{\footnotesize $\sigma$}}
\put(180,33){{\footnotesize $p^-$}}
\put(150,3){{\footnotesize  $a^-$}}

\end{picture}
\caption{Differentiation of $K_9$}\label{K9 derivado}
\end{figure}
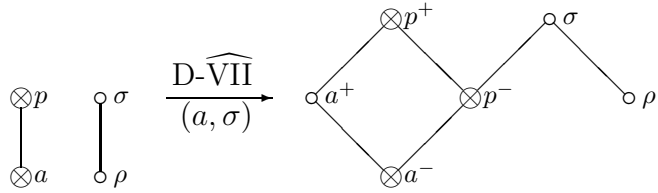

\begin{theorem}
The critical poset $K_9$ possesses 48 types of indecomposable corepresentations over the pair ($\F ,\ \G$), listed together with their dual corepresentations in the inductive definition
\end{theorem}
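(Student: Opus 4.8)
The plan is to mirror the strategy used for $K_7$ in Theorem~C2: exhibit a $\seven$-suitable pair, differentiate, and then recover every indecomposable by integration. As indicated in Figure~\ref{K9 derivado}, the pair $(a,\sigma)$ formed by the minimal weak point $a$ and the maximal strong point $\sigma$ is suitable for the algorithm D-$\seven$ of \cite{Rod-Zav-07}. First I would verify that $(a,\sigma)$ meets the hypotheses required by that algorithm and compute the derived poset $K_9'$, whose six points $a^{+}$ (now strong), $a^{-},p^{+},p^{-}$ (weak) and $\rho,\sigma$ (strong) are those displayed on the right of Figure~\ref{K9 derivado}. Since $K_9$ is critical and hence one-parameter by Theorem~A, the derived poset is of strictly smaller complexity, and its full subposets belong to the families of sincere one-parameter posets already classified in Sections~\ref{sectionK6}--\ref{sectionK8}; this makes a complete list of $\ind K_9'$ available as the input for integration.

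The core step is then the \emph{exhaustive integration}: for each $U'\in\ind K_9'$ I would apply the integration operator $\seven$ to produce the corresponding corepresentation $\Int U'$ of $K_9$, organizing the output exactly as in Table~\ref{classification of coreps of K7}, that is, recording each type as $(K_9-m)=\Int(\cdots)$ together with the value of the Tits form on its dimension vector. Duality halves the labour: by Proposition~\ref{duality principle} and \lemref{dualsincere} one sets $(K_9-i^{*})=(K_9-i)^{*}$, so it suffices to classify up to duality and then adjoin the dual types. I expect the tally to be $48=2\cdot 25-2$, the two self-dual types accounting for the deficit, in complete analogy with $K_7$.

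Separately I would treat the one-parameter family. By Theorem~D the discrete indecomposables are exactly those whose dimension vectors are admissible roots or special vectors, with Tits form $\neq 0$, while the imaginary roots (Tits form $=0$) are covered by a single series. As for $K_7$, I expect this series to descend from the Kronecker pencil living on the pair of weak points created by the differentiation, that is, from the $K_6$ series of Table~\ref{series of coreps of K6}; consequently the square parameter block $X$ must be taken in Frobenius (rational) canonical form \emph{over $\F$} rather than over $\G$. I would confirm this field of definition and check that no further imaginary-root dimensions arise, so that this one series exhausts the continuous part.

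The main obstacle will be the bookkeeping of the exhaustive integration rather than any single conceptual point: one must verify that the $\Int U'$ are pairwise non-isomorphic, that every sincere indecomposable of $K_9$ actually occurs (completeness), and that the count is exactly $48$ after the duality identifications, all while keeping the discrete types cleanly separated from the series. A secondary technical subtlety is matching each produced dimension vector against the root / special-vector classification of Theorem~D, which is precisely what licenses the phrase ``with Tits form not $0$'' in the statement of Theorem~C4.
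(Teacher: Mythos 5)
Your strategy is exactly the paper's: Section \ref{sectionK9} establishes the theorem by differentiating $K_9$ with respect to the $\seven$-suitable pair $(a,\sigma)$ (Figure \ref{K9 derivado}) and then defining each type $(K_9-m)$ as $\Int$ of an indecomposable of the derived poset, all of which are already available from the $K_8$, $A_{40}$ and $A_{40}^*$ classifications of Section \ref{sectionK8}, with duality supplying the starred types (Tables \ref{coreps de K9 f1} and \ref{coreps de K9 f2}).

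Two of your expectations, however, are false and would need correcting in the execution. First, the duality arithmetic is not ``in complete analogy with $K_7$'': the paper's tables exhibit $26$ base types for $K_9$, of which \emph{four} are self-dual (types $12$, $13$, $25$ and $26$), so the count is $48 = 2\cdot 26 - 4$, not $2\cdot 25 - 2$. Second, and more substantively, the continuous family of $K_9$ cannot descend from the $K_6$ Kronecker series of Table \ref{series of coreps of K6}: in the derived poset the weak points $a^-, p^-, p^+$ form a chain, so it contains no copy of $K_6$ at all; it does contain the triad $\{a^+;\ \rho;\ p^-\}$, which is a copy of $K_8$, and the series therefore comes from the $K_8$ series (\ref{Series K8(1)})--(\ref{Series K8(2)}), whose parameter block is a companion matrix over $\G$ (tied to a skew polynomial ring $\G[t,\sigma,\delta]$), not a Frobenius block over $\F$. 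Neither slip breaks the method --- the exhaustive integration would force the correct tally, and your stated intention to verify the field of definition would expose the second error --- but as written both predictions contradict what the classification actually yields.
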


\begin{table}[hbtp]
\centering
\renewcommand{\arraystretch}{1.2}
\renewcommand{\tabcolsep}{1.5mm}
\setlength{\doublerulesep}{0.14pt}
\setlength{\arrayrulewidth}{0.2pt}
\begin{tabular}{|rcl|rcl|}\hline
$(K_9 - 1) $ & = & $\Int\, (K_8 - 1)     $  & $(K_9 - 1^*) $ & = & $\Int\, (A_{40} - 6)     $ \\
$(K_9 - 2) $ & = & $\Int\, (K_8 - 1^*)   $  & $(K_9 - 2^*) $ & = & $\Int\, (A_{40^*} - 6)   $ \\
$(K_9 - 3) $ & = & $\Int\, (K_8 - 2)     $  & $(K_9 - 3^*) $ & = & $\Int\, (A_{40^*} - 3)   $ \\
$(K_9 - 4) $ & = & $\Int\, (K_8 - 2^*)   $  & $(K_9 - 4^*) $ & = & $\Int\, (A_{40} - 3)     $ \\
$(K_9 - 5) $ & = & $\Int\, (K_8 - 7)     $  & $(K_9 - 5^*) $ & = & $\Int\, (A_{40} - 1)     $ \\
$(K_9 - 6) $ & = & $\Int\, (K_8 - 7^*)   $  & $(K_9 - 6^*) $ & = & $\Int\, (A_{40^*} - 1)   $ \\
$(K_9 - 7) $ & = & $\Int\, (A_{40} - 2)  $  & $(K_9 - 7^*) $ & = & $\Int\, (A_{40^*} - 7)   $ \\
$(K_9 - 8) $ & = & $\Int\, (A_{40} - 7)  $  & $(K_9 - 8^*) $ & = & $\Int\, (A_{40^*} - 2)   $ \\
$(K_9 - 9) $ & = & $\Int\, (A_{40} - 11) $  & $(K_9 - 9^*) $ & = & $\Int\, (A_{40^*} - 13)  $ \\
$(K_9 - 10)$ & = & $\Int\, (A_{40} - 12) $  & $(K_9 - 10^*)$ & = & $\Int\, (A_{40^*} - 11)  $ \\
$(K_9 - 11)$ & = & $\Int\, (A_{40} - 13) $  & $(K_9 - 11^*)$ & = & $\Int\, (A_{40^*} - 12)  $ \\
$(K_9 - 12)$ & = & $\Int\, (A_{40} - 5)  $  &                &   &                            \\
$(K_9 - 13)$ & = & $\Int\, (A_{40^*} - 5)$  &                &   &                            \\ \hline
\end{tabular}
\caption{Corepresentaciones del poset $K_9$ y forma de Tits con valor 1, los tipos de corepresentación 12 y 13 son autoduales.}\label{coreps de K9 f1}
\end{table}

\begin{table}[hbtp]
\centering
\renewcommand{\arraystretch}{1.2}
\renewcommand{\tabcolsep}{1.5mm}
\setlength{\doublerulesep}{0.14pt}
\setlength{\arrayrulewidth}{0.2pt}
\begin{tabular}{|rcl|rcl|}\hline
$(K_9 - 14) $ & = & $\Int\, (K_8 - 3)               $  & $(K_9 - 14^*) $ & = & $\Int\, (K_8 - \widetilde{9}^*)  $ \\
$(K_9 - 15) $ & = & $\Int\, (K_8 - 3^*)             $  & $(K_9 - 15^*) $ & = & $\Int\, (K_8 - \widetilde{9})    $ \\
$(K_9 - 16) $ & = & $\Int\, (K_8 - \widetilde{3})   $  & $(K_9 - 16^*) $ & = & $\Int\, (A_{40} - 10)            $ \\
$(K_9 - 17) $ & = & $\Int\, (K_8 - \widetilde{3}^*) $  & $(K_9 - 17^*) $ & = & $\Int\, (A_{40^*} - 10)          $ \\
$(K_9 - 18) $ & = & $\Int\, (K_8 - 8)               $  & $(K_9 - 18^*) $ & = & $\Int\, (A_{40} - 8)             $ \\
$(K_9 - 19) $ & = & $\Int\, (K_8 - 8^*)             $  & $(K_9 - 19^*) $ & = & $\Int\, (A_{40^*} - 8)           $ \\
$(K_9 - 20) $ & = & $\Int\, (K_8 - 9)               $  & $(K_9 - 20^*) $ & = & $\Int\, (A_{40} - 4)             $ \\
$(K_9 - 21) $ & = & $\Int\, (K_8 - 9^*)             $  & $(K_9 - 21^*) $ & = & $\Int\, (A_{40^*} - 4)           $ \\
$(K_9 - 22) $ & = & $\Int\, (A_{40} - 9)            $  & $(K_9 - 22^*) $ & = & $\Int\, (A_{40^*} - 9)           $ \\
$(K_9 - 23)$ & = & $\Int\,  (A_{40} - 14)           $  & $(K_9 - 23^*)$ & = & $\Int\, (A_{40^*} - 15)           $ \\
$(K_9 - 24)$ & = & $\Int\,  (A_{40} - 15)           $  & $(K_9 - 24^*)$ & = & $\Int\, (A_{40^*} - 14)           $ \\
$(K_9 - 25)$ & = & $\Int\,  (K_8 - 10)              $  &                &   &                                     \\
$(K_9 - 26)$ & = & $\Int\,  (K_8 - 10^*)            $  &                &   &                                     \\ \hline
\end{tabular}
\caption{Corepresentaciones del poset $K_9$ y forma de Tits con valor 2, los tipos de corepresentación 25 y 26 son autoduales.}\label{coreps de K9 f2}
\end{table}

\begin{center}
\begin{table}[hbtp]
\renewcommand{\arraystretch}{1.1}
\renewcommand{\tabcolsep}{2mm}
\begin{tabular}{llll}
$W_{1}\cong \ _1(K_9 - 6)$,    & $r_{W_1} = (0,0,0,1)$; &  $W_{6}\cong \ _1(K_9 - 16^*)$, & $r_{W_6} = (0,1,0,0)$; \\
$W_{2}\cong \ _1(K_9 - 7)$,    & $r_{W_2} = (0,0,1,0)$; &  $W_{7}\cong \ _1(K_9 - 23)$,   & $r_{W_7} = (1,1,2,2)$; \\
$W_{3}\cong \ _1(K_9 - 9)$,    & $r_{W_3} = (0,1,1,1)$; &  $W_{8}\cong \ _1(K_9 - 15)$,   & $r_{W_8} = (1,0,0,0)$; \\
$W_{4}\cong \ _2(K_9 - 11)$,   & $r_{W_4} = (0,0,0,1)$; &  $W_{9}\cong \ _1(K_9 - 19)$,   & $r_{W_9} = (1,0,0,2)$; \\
$W_{5}\cong \ _2(K_9 - 18^*)$, & $r_{W_5} = (0,1,0,0)$; &  $W_{10}\cong \ _2(K_9 - 24)$,  & $r_{W_{10}} = (0,1,0,2)$; \\
\end{tabular}
\caption{Corepresentaciones de $K_9$ con $r\not=0$.}\label{K9 coreps r no 0}
\end{table}
\end{center}

\begin{lemma}
An indecomposable corepresentation $U$ of a poset $\p$ of the form (\ref{K9 subset}) is not sincere at $K_9^+$ or $K_9^-$
\end{lemma}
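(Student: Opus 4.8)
The plan is to imitate the proof of the corresponding statement for $K_7$ (the lemma giving $(K_7)^{\blacktriangledown}=\emptyset$ or $(K_7)_{\blacktriangle}=\emptyset$), with the nine corepresentations of Table~\ref{K7 coreps r no 0} replaced by the ten corepresentations $W_1,\dots,W_{10}$ of $K_9$ gathered in Table~\ref{K9 coreps r no 0}. Writing $\p=K_9+K_9^++K_9^-$ as in~(\ref{K9 subset}), with $K_9^+=(K_9)^{\blacktriangledown}$ and $K_9^-=(K_9)_{\blacktriangle}$, I want to show that no indecomposable $U$ can be sincere simultaneously at a point $x^+\in K_9^+$ and at a point $x^-\in K_9^-$.

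First I would assume $U$ sincere at some $x^+\in K_9^+$, restrict to the critical subposet, and decompose the restriction as $V=U|_{K_9}=\bigoplus_j V^j$. The role of the codimension vector $r=(r_1,r_2,r_3,r_4)$ attached to a corepresentation of $K_9$ is to measure the room left above $K_9$: a summand with $r_{V^j}=0$ has all the relevant $\G$-hulls already filling the corresponding direct summand of $U_0$, so it splits off as a direct summand of $U$ that is null on every point of $K_9^+$; indecomposability of $U$ together with $d_{x^+}>0$ then forces $r_{V^j}\neq 0$ for every $j$. By the exhaustive inspection recorded in Table~\ref{K9 coreps r no 0}, this pins down $V^j\simeq W_i$ for some $i\in\{1,\dots,10\}$.

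Next I would read off from the classification that $r^*_{W_i}=0$ for every $i\in\{1,\dots,10\}$, exactly as the equality $r^*_{W_i}=0$ held for the $K_7$ list. Since the dimension vector $r^*$ governs the support at points of $K_9^-$ in the way dual to $r$, and every summand $V^j\simeq W_i$ has $r^*_{V^j}=0$, the corepresentation $U$ can carry no support on $K_9^-$, contradicting $d_{x^-}>0$. The symmetric case is settled by duality: by Proposition~\ref{duality principle} and Lemma~\ref{dualsincere} passing to duals interchanges $r$ and $r^*$, so the corepresentations of $K_9$ with $r^*\neq 0$ are precisely $W_1^*,\dots,W_{10}^*$, all of which satisfy $r=0$; hence sincerity at a point of $K_9^-$ likewise precludes sincerity at any point of $K_9^+$, and the lemma follows.

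The step I expect to be the main obstacle is not the bookkeeping but justifying the implication ``$U$ sincere at $x^+\Rightarrow r_{V^j}\neq 0$ for each $j$'' (and its dual): one must check, case by case on the possible positions a point of $K_9^+$ can occupy over $K_9$ --- the analogues of the conditions (1)--(4) used in the $K_7$ discussion --- that a vanishing $r$-coordinate on a summand genuinely prevents that summand from reaching the point $x^+$, so that it detaches as a direct summand. Once this is secured, the remaining input, namely $r^*_{W_i}=0$ for $i=1,\dots,10$, is a routine reading of Tables~\ref{coreps de K9 f1} and~\ref{coreps de K9 f2}.
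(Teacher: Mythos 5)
The paper itself contains no proof of this lemma: it is stated bare in Section \ref{sectionK9}, and even the form it refers to is a dangling label (the analogue of (\ref{K7 subset}), so one must read $\p = K_9 + (K_9)^{\blacktriangledown} + (K_9)_{\blacktriangle}$ with $K_9^{+}, K_9^{-}$ the upper and lower parts). The only available model is the paper's proof of the corresponding $K_7$ lemma, and your proposal reproduces that argument faithfully: restrict to the critical subposet, decompose, force every summand into Table \ref{K9 coreps r no 0}, and conclude from $r^*_{W_i}=0$ together with duality. Your bookkeeping is also correct: the ten types occurring in Table \ref{K9 coreps r no 0} and their dual types (read off from Tables \ref{coreps de K9 f1} and \ref{coreps de K9 f2}) form disjoint lists, so no indecomposable corepresentation of $K_9$ has $r\not=0$ and $r^*\not=0$ simultaneously, which is exactly the fact $r^*_{W_i}=0$ you need.

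The one genuine problem is the step you flag, and your proposed repair aims at the wrong place. A single summand $V^j$ with $r_{V^j}=0$ cannot be detached by analyzing the possible positions of points of $K_9^{+}$: for $V^j$ to split off as a direct summand of $U$, the decomposition of $U_0$ must also respect the subspaces $U_y$ for $y\in K_9^{-}$, and those are only bounded above by intersections and cohulls, which need not decompose along $(V^j)_0$; the obstruction lives at $K_9^{-}$, so no case analysis at $K_9^{+}$ can remove it. The correct repair treats both sides at once. Put $A=\bigoplus_{r_{V^j}=0}(V^j)_0$ and $B=\bigoplus_{r_{V^j}\not=0}(V^j)_0$. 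For each $x\in K_9^{+}$ the defining lower bound (the sum of subspaces and hulls coming from the points of $K_9$ under $x$) contains $A$, so $U_x\supset A$; for each $y\in K_9^{-}$ the upper bound decomposes summandwise and vanishes on the $B$-part precisely because $r^*=0$ there, so $U_y\subset A$. Hence every subspace of $U$ splits along $U_0=A\oplus B$ and $U=U^A\oplus U^B$ as corepresentations. Indecomposability gives $A=0$ or $B=0$: if $B=0$ the radical already fills $U_x$ at every $x\in K_9^{+}$, so $U$ is not sincere there; if $A=0$ then $U_y=0$ at every $y\in K_9^{-}$. Either alternative yields the lemma. Note that this argument uses $r^*_{W_i}=0$ \emph{inside} the splitting step, so the two stages of your plan (first ``all summands have $r\not=0$'', then the $r^*$ conclusion) cannot be carried out sequentially; the same caveat applies, incidentally, to the paper's own terse proof of the $K_7$ analogue.
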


\begin{lemma}\label{K9 the one}
If $\p$ is a sincere one parametric equipped poset, containing a subset $K_9$, then $\p = K_9$.
\end{lemma}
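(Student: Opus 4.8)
The plan is to follow verbatim the strategy used for Lemma~\ref{K7 the one}, replacing the role of $K_7$ by $K_9$ and the list of corepresentations with $r\ne 0$ by Table~\ref{K9 coreps r no 0}. By the preceding lemma, an indecomposable $U$ of a poset of the form (\ref{K9 subset}) cannot be sincere simultaneously on $K_9^+$ and on $K_9^-$; since a sincere poset must carry at least one sincere indecomposable, one of these two parts has to be empty. Thus I may assume $\p = K_9 + K_9^+$, the case $\p = K_9 + K_9^-$ being recovered by the duality principle (Proposition~\ref{duality principle} together with Lemma~\ref{dualsincere}). The argument then runs by induction on $|K_9^+|\ge 1$.

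For the inductive step I would pick a maximal point $x\in K_9^+\cap\Max\p$ and a sincere $U\in\ind\p$, and decompose the restriction $V=U|_{\p\setminus x}=\bigoplus_i V^i$ into indecomposables with supports $S^i=\supp V^i$. Each summand is either supported inside $K_9$---in which case, because $U$ is sincere at $x$, it must be one of the corepresentations $W_1,\ldots,W_{10}$ of Table~\ref{K9 coreps r no 0} (those with $r\ne 0$)---or, when $|K_9^+|>1$, it is supported on a proper subposet $S^i\not\subset K_9$, and then one-parameter type forces $S^i$ to be one of the finite posets $F_{13},\ldots,F_{18}$ (if it carries a weak point) or an ordinary finite poset, otherwise $\p$ would fail to be one-parameter.

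Next I would split into the cases describing how $x$ sits over $K_9$, according to which of the chains $a\prec p$ and $\varrho\lhd\sigma$ it dominates and whether the domination is weak or strong, exactly as in the cases for $K_7$. In each case the admissible summands $W_i$ for which the relevant coordinate $r_j\ne 0$ all vanish on the weak point $a$ (or on the appropriate strong point), so $U$ cannot be sincere there; and any summand with $S^i\not\subset K_9$ that might restore sincerity at that point forces either a second critical subposet ($K_6$ or $K_8$) or a configuration outside one-parameter type, a contradiction with the hypotheses on $\p$.

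The hard part will be the genuinely mixed case, in which $x$ dominates one chain weakly and the other strongly, so that no single coordinate of $r$ is by itself decisive. There, after reducing the restricted part $M_V$ of a matrix realization $M_U$, I expect a mixed-type matrix problem on the vertical stripe $x$ which, once the admissible summands are arranged into a chain of horizontal stripes (with $\G$-elementary transformations allowed on the bottom stripe and only $\F$-elementary ones above), becomes the problem of classifying indecomposable representations of a chain, precisely as in Figure~\ref{K7 y chain}. Since the sincere subsets of a chain are singletons, $\p\setminus x$ must reduce to a single $W_i$, which forces $d_a=0$ or $d_p=0$ and contradicts the sincerity of $U$. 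Hence no point can be adjoined to $K_9$, giving $\p=K_9$.
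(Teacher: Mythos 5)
Your plan coincides with what the paper itself does: the paper states Lemma~\ref{K9 the one} without writing out a proof, leaving it to the unnamed lemma on posets of the form $K_9 + K_9^+ + K_9^-$ and to the argument already carried out in detail for Lemma~\ref{K7 the one} (induction on the number of adjoined points, decomposition of the restriction $U|_{\p\setminus x}$, the list of corepresentations with $r\neq 0$ in Table~\ref{K9 coreps r no 0}, and reduction of the mixed case to representations of a chain as in Figure~\ref{K7 y chain}). Your proposal is precisely that adaptation, so it is correct and takes essentially the same approach as the paper.
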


\section{Proof of the Main Theorems}\label{proofs}

The Theorems A and B are followed from results about classification of indecomposables corepresentations of 2-equipped posets given in Sections \ref{sectionK6} - \ref{sectionK9}, listed in appendixes \ref{sincereK6posets}, \ref{sincereK8posets} and \ref{specialposets}, as well as the Lemmas \ref{K7 the one}, \ref{K9 the one} above, and Theorems \ref{K6 subset}, \ref{K8 subset} given in this Section. Firstly, we proof some lemmas.


\begin{lemma}
Let $\p$ be an equipped poset of the form $ \p = \{ a; \: b\} + \{ a;\: b\}^{\curlyvee} + \{a; \: b\}_{\curlywedge} $ where $a,\ b$ are weak points, and $\{a; \: b\}_{\curlywedge} = p_m \prec \cdots \prec p_1$ and $\{a; \: b\}^{\curlyvee} = q_1 \prec \cdots \prec q_n$ are completely weak chains. Then, $\p$ is sincere if and only if $n+m \le 2$.\label{K6 weak well ins. sets}
\end{lemma}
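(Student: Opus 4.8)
The plan is to prove the two implications separately: the sufficiency of $n+m\le 2$ by direct identification with posets already classified, and the necessity by a restriction-and-reduction argument modelled on the proofs of Proposition~\ref{A42coreps} and Lemma~\ref{K7 the one}.

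For the implication ``$n+m\le 2\Rightarrow\p$ sincere'' I would simply observe that, up to isomorphism and anti-isomorphism, $\p$ is one of $K_6$ (when $n=m=0$), $A_{38}=\{a\prec q_1\succ b\}$ (when $\{n,m\}=\{1,0\}$), $A_{42}=\{\{a,b\}\prec q_1\prec q_2\}$ (when $\{n,m\}=\{2,0\}$) and $A_{41}=\{p_1\prec\{a,b\}\prec q_1\}$ (when $n=m=1$). Each of these carries a sincere indecomposable corepresentation by Theorem~C1 and Propositions~\ref{Classification of A38}, \ref{A42coreps} and \ref{Classification of A41 and A46}; the anti-isomorphic cases $A_{38}^{*}$ and $A_{42}^{*}$ are sincere by Lemma~\ref{dualsincere}. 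Hence $\p$ is sincere.

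For the converse I argue the contrapositive, assuming $n+m\ge 3$. Since the duality of Lemma~\ref{dualsincere} interchanges $\{a;b\}^{\curlyvee}$ and $\{a;b\}_{\curlywedge}$, replacing $\p$ by $\p^{*}$ if necessary lets me assume $n\ge m$, so that $n\ge 2$. Observe that $\p$ is a completely weak garland, namely the ordinal sum $p_m\prec\cdots\prec p_1\prec\{a;b\}\prec q_1\prec\cdots\prec q_n$, whose only antichain of size at least two is the weak dyad $\{a;b\}$, so $w(\p)=4$ and $\p$ contains $K_6$. Suppose, for a contradiction, that $U\in\ind\p$ is sincere. From $p_1\prec a,b$ and $a,b\prec q_1$ the chain spaces form nested $\F$-flags $0\subsetneq U_{p_m}\subsetneq\cdots\subsetneq U_{p_1}\subseteq U_a\cap U_b$ and $U_a+U_b\subsetneq U_{q_1}\subsetneq\cdots\subsetneq U_{q_n}\subseteq U_0$, the inclusions between consecutive chain spaces being strict by sincerity.

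The core of the argument is to pass to the matrix realisation $M=M_U$ and reduce the stripes of the subposet $A_{38}=\{a\prec q_1\succ b\}$ to canonical form. Restricting $U$ to $A_{38}$ and writing $U|_{A_{38}}=\bigoplus_i V^i$, Lemma~\ref{properties of Some COREPS}(a) (giving $V^i_a=V^i_b=0$ or $\US{V}{}^i_a+\US{V}{}^i_b=V^i_0$) together with the explicit list of Proposition~\ref{Classification of A38} and Theorem~C1 forces every summand to be either the type $(A_{38}-1)$, sincere at $q_1$, or one of the non-sincere $K_6$-types; once $U_a+U_b$ has been normalised accordingly, the behaviour of each block on the residual stripes is completely determined. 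Grouping the summands by dimension exactly as in Figure~\ref{guirnalda}, the transformations still admissible on the remaining stripes $p_1,\dots,p_m$ and $q_2,\dots,q_n$ degenerate into a matrix problem equivalent to the classification of representations of an ordinary garland, which is representation-finite and reduces further to a chain, as in case~(4) of the proof of Lemma~\ref{K7 the one} and Figure~\ref{K7 y chain}. A direct inspection of the sincere indecomposables of that reduced problem shows that their supports are too short to meet every chain point: simultaneous sincerity at all of $p_1,\dots,p_m,q_1,\dots,q_n$ together with $a,b$ becomes impossible as soon as $n+m\ge 3$, so some $d_{p_i}$ or $d_{q_j}$ is forced to vanish, contradicting the sincerity of $U$.

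The step I expect to be the main obstacle is precisely this last reduction. After the $A_{38}$-core is normalised one must verify that the weak ($\F$-elementary) and strong ($\G$-elementary) operations surviving on the tail stripes $p_1,\dots,p_m,q_2,\dots,q_n$ collapse to the operations of an ordinary, representation-finite poset problem, and then carry out the finite bookkeeping---analogous to Figures~\ref{guirnalda} and~\ref{K7 y chain}, but now with tails of arbitrary length on both sides of the dyad---showing that none of its sincere indecomposables is compatible with a support reaching every chain point once $n+m\ge 3$.
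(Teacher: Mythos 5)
Your sufficiency argument is correct and is what the paper uses implicitly: for $n+m\le 2$ the poset is, up to isomorphism or anti-isomorphism, one of $K_6$, $A_{38}$, $A_{41}$, $A_{42}$, all of which are sincere by Theorem C1, Propositions \ref{Classification of A38}, \ref{Classification of A41 and A46}, \ref{A42coreps} and Lemma \ref{dualsincere}. The gap is in the converse, and it sits exactly at the step you flag as the main obstacle; it is not bookkeeping but the missing idea that the paper supplies by induction. After you normalize the core $A_{38}=\{a\prec q_1\succ b\}$ once, what remains is a matrix problem with \emph{several} unreduced column stripes $p_1,\dots,p_m,q_2,\dots,q_n$, each admitting only $\F$-elementary column transformations plus additions along the chain, acted on by row transformations constrained to the stabilizer of the core's canonical form. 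Nothing in Proposition \ref{A42coreps} or Lemma \ref{K7 the one} covers such a two-sided, multi-stripe problem: in both of those proofs exactly one vertical stripe remains unreduced ($r$, respectively $x$), and that is the only reason the problem collapses to representations of a garland, respectively of a chain. Your asserted ``degeneration to an ordinary garland'' for arbitrarily many stripes is precisely the statement that would require proof; it is also misstated, since the garland arising in the paper's reduction, namely (\ref{guirnalda orden}) with the blocks (\ref{classification of reps of a garland}), is an equipped garland containing weak points (the blocks $(1,\xi)^{T}$), not an ordinary one.

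The paper avoids the multi-stripe problem altogether by inducting on the chain length: it removes the single maximal point $q_n$, decomposes $U|_{\p\setminus q_n}$, uses the inductive hypothesis to force every summand into a short explicit list, and only then is left with a one-stripe problem on $q_n$, which is a garland representation problem ($\mathcal{G}'$ when one of the two chains is empty, $\mathcal{G}''$ when both are nonempty). Your plan has no substitute for that induction. There is also a second, independent defect when $m>0$: the restriction $U|_{A_{38}}$ has $V_a=U_a$ and $V_b=U_b$, and in the matrix realization these subspaces are generated also by the columns of the stripes $p_m,\dots,p_1$, so ``reducing the stripes of $A_{38}$ in $M_U$'' does not put $U|_{A_{38}}$ into canonical form; correspondingly, your dichotomy ``each summand is of type $(A_{38}-1)$ or a non-sincere $K_6$-type'' is too small---the paper's case (2) also admits summands of type $(A_{38}^*-1)$ supported on $\{a,b\}+p_j$, which produce the extra strips $f_i,f_i'$ of the garland $\mathcal{G}''$. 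As written, the necessity half of your proof does not go through.
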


\begin{proof} We study two cases, \textbf{(1)} $\{a; \: b\}_{\curlywedge} = \emptyset$ or $\{a; \: b\}^{\curlyvee} = \emptyset$. We suppose $\{a; \: b\}_{\curlywedge} = \emptyset$ (the second condition is dual). We apply induction on $n\ge 2$. Notice that $A_{38}$ and $A_{42}$ correspond to the base case. As the induction hypothesis we assume that $\p$ is not sincere for $2 < n < k$, for some fixed number $k$. Let $U$ be an indecomposable corepresentation of $\p$ and decompose its restriction $V = U|_{\p \setminus q_n}$ in a direct sum of indecomposables $ V = \bigoplus_i V^i$. By the induction hypothesis, it follows that the support of each summand is sincere at most in one point of $q_1, \ldots , q_{n-1}$, otherwise it would be a direct summand of $U$. We consider the matrix realization of $U$ by writing in the canonical form the summands $V^i$. In the vertical stripe corresponding to $q_n$, it is obtained a matrix problem (analogously to the describe above in the case of the poset $A_{42}$, see the Figure \ref{guirnalda}) about classification of indecomposable representations of a garland $\mathcal{G}'$, which differs from the garland (\ref{guirnalda orden}) by substituting the chain $b_{i_1} < \cdots < b_{i_{k-1}}$ for each point $b_i$. Its classification has the form (\ref{classification of reps of a garland}) together with identity matrix blocks of order 1 on the horizontal stripes corresponding to the points $b_{n_j}$. It implies that $U$ is decomposable.

Case \textbf{(2)} $|\{a; \: b\}_{\curlywedge}| > 0$ and $|\{a; \: b\}^{\curlyvee}| > 0$. Consider a matrix realization of an indecomposable corepresentation $U$ of $\p$ sincere at $q_n$, and apply induction on $m+n \ge 2$. The base case corresponds to the posets $A_{41}$ and $A_{46}$. Given a fixed $k > 2$, we assume that $\p$ is not sincere provided that $2 < m + n < k$. For the induction step, we decompose the restriction $V = U|_{\p\setminus q_n}$ in indecomposable summands $V = \bigoplus_i V^i$. Since $U$ is indecomposable, the summands have the form $(K_6 - 1)$, $(K_6 - 1^*)$, $(K_6 - 2^*)$, $(K_6 - 5)$ or $(A_{38} - 1)$ with support at the sets $\{ a; \: b \} + q_i$ with $i =1, \ldots , n-1$, or $(A_{38}^* - 1)$ with support at the sets $\{ a; \: b \} + q_j$ with $j =1, \ldots , m$. On the vertical stripe corresponding to the point $q_n$ appears a problem about the classification of indecomposable representations of a garland $\mathcal{G}''$ which is obtained from the garland $\mathcal{G}'$ by changing the chain $d_n < \cdots < d_1$ for the chain $d_n < f_n < f_n' < \cdots < d_2 < f_2 < f_2' < d_1 < f_n < f_n' $. The horizontal stripes $f_i, f_i'$ correspond to the corepresentation summands $(A_{38}^* - 1)$. The problem determined by the garland $\mathcal{G}''$ is reduced in the same way that garlands $\mathcal{G}'$ and (\ref{guirnalda orden}).
\end{proof}

If an equipped poset $\p$ contains a critical subset $\{ a, b \}$, we denote $A^- = N(b)\setminus a^{\vee}$, $A^+ = N(b)\setminus a_{\wedge} $, where $N(b)$ is the set of incomparable points of $b$ in $\p$, $B^-$ and $B^+$ are defined analogously.

\begin{lemma}
Let $\p$ be an equipped poset containing a critical subset $K_{6} = \{ a, b \}$ and having the form \[ \p = \{ A^- \lhd a \lhd A^+ , \,\,\,  B^- \lhd b \lhd B^+ \}, \] where $A^-$, $A^+$, $B^-$, $B^+$ are chains of strong points. Then, $\p$ is sincere if and only if $|A^-| + |A^+| + |B^-| + |B^+| \le 2$.\label{K6 strong well ins. sets}
\end{lemma}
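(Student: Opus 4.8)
The plan is to prove both implications by reducing to the explicit classifications already obtained, using induction on $N := |A^-| + |A^+| + |B^-| + |B^+|$ together with the extension/restriction mechanism for a maximal strong point. First note that since $A^-, A^+ \subset N(b)$ and $B^-, B^+ \subset N(a)$, and the displayed relations are all the relations of $\p$, the poset splits as two mutually incomparable chains, $\p = C_a + C_b$ with $C_a = (A^- \lhd a \lhd A^+)$ and $C_b = (B^- \lhd b \lhd B^+)$, each carrying a single weak point ($a$, resp.\ $b$). Interchanging the two combs is an isomorphism of $2$-equipped posets (as $K_6=\{a,b\}$ is symmetric in $a,b$), and the duality of Lemma~\ref{dualsincere} sends $A^-\leftrightarrow A^+$, $B^-\leftrightarrow B^+$ while preserving both $N$ and sincerity.

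For the ``if'' direction I would enumerate the distributions of the $N\le 2$ inserted points among the four arms. Up to comb-interchange and duality these are exactly $K_6$ ($N=0$), $A_{25}$ ($N=1$), and $A_{28},A_{29},A_{33},A_{34}$ ($N=2$): two points in one arm give $A_{29}$, one point in each arm of one comb gives $A_{34}$, one point in each comb with the two points on the same side gives $A_{28}$, and on opposite sides gives the self-dual $A_{33}$. All of these are sincere by Theorem~C1 and Propositions~\ref{Classification of coreps of A25} and \ref{Classification of A28, A29, A33, A34 and A45}, so this direction is immediate.

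For the ``only if'' direction I would show that $N\ge 3$ forces non-sincerity, by induction on $N$. Up to duality and comb-interchange one may assume $A^+\neq\emptyset$; set $\zeta=\max A^+$, a maximal strong point, so that $\mathcal{R}=\{a\}\cup A^-\cup(A^+\setminus\zeta)$ is the chain $C_a\setminus\zeta$ and $\underline{U_\zeta}=\US{U}_{t}$ where $t$ is its top. Assuming $U\in\ind\p$ is sincere, restrict $V=U|_{\q}$ with $\q=\p\setminus\zeta$ and decompose $V=\bigoplus_i V^i$. Because $\mathcal{R}$ is a chain of strong points, reducing $V$ to canonical form turns the matrix problem on the $\zeta$-stripe into the classification of indecomposable representations of a chain, exactly as in the $K_7$-reduction of Figure~\ref{K7 y chain} and the garland reductions of Lemma~\ref{K6 weak well ins. sets}; since the sincere indecomposable representations of a chain are supported at a single point, $U$ must be the extension $(V^i)^\zeta$ of a single summand. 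Hence $\supp U=\supp V^i\cup\{\zeta\}$, so sincerity of $U$ forces $V^i$ to be sincere on $\q$ and the extension condition $\US{V^i}_{t}\neq V^i_0$ to hold.

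For $N\ge 4$ the poset $\q$ again has the two-comb form with $N-1\ge 3$ inserted points, so by the induction hypothesis $\q$ is not sincere and no such $V^i$ exists, a contradiction. The base case $N=3$ is the crux and the main obstacle: here $\q$ is one of the \emph{sincere} posets $A_{28},A_{29},A_{33},A_{34}$, so a sincere summand $V^i$ does exist and one must instead show the extension condition fails. When some arm carries at least two points the top $t$ is strong, and the recursion of Table~\ref{List of coreps of A28, A29, A33, A34 and 45} exhibits the sincere indecomposable of $\q$ itself as an extension $(\cdot)^\zeta$ that sets the top stripe equal to the whole space, giving $\US{V^i}_{t}=V^i_{t}=V^i_0$ and blocking the extension. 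When every arm carries at most one point (so $t=a$ is weak), I would verify directly from the explicit matrices of the relevant $N=2$ poset that $\US{V^i}_{a}=V^i_0$, equivalently that any one-dimensional extension by $\zeta$ splits off a trivial summand. This last saturation check, carried out case by case against the classification tables, is the delicate part; completing it yields the contradiction and hence the lemma.
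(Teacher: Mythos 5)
Your ``if'' direction is fine: the enumeration of the $N\le 2$ cases into $K_6$, $A_{25}$, $A_{28}$, $A_{29}$, $A_{33}$, $A_{34}$ (up to comb-interchange and the duality of Lemma~\ref{dualsincere}) is correct, and the cited classifications do establish sincerity of each of these. For context: the paper states this lemma with no proof at all, so the only things to compare your plan against are the two analogous arguments the paper does carry out, namely the garland reduction proving Lemma~\ref{K6 weak well ins. sets} and the chain reduction in the proof of Lemma~\ref{K7 the one}; your ``only if'' direction is modelled on exactly those.

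That direction, however, contains a genuine gap, and it sits precisely where you flag ``the delicate part''. Two steps are asserted rather than proved. First, the claim that, after reducing $V=U|_{\p\setminus\zeta}$ to canonical form, the problem on the $\zeta$-stripe is the representation problem of a chain: in the paper's analogous proofs this is where essentially all the work lies --- one must list \emph{all} indecomposable summand types that can occur in $V$ (the non-sincere ones included: indecomposables of $K_6$, of $A_{25}$, of subchains, etc.), determine which block row additions between their isotypic parts are realizable, and exhibit the resulting order; this is the content of Figure~\ref{K7 y chain} and of the matrices (\ref{classification of reps of a garland}), and you never produce its analogue. Your supporting remark that $\mathcal{R}$ ``is a chain of strong points'' is also false, since $a\in\mathcal{R}$ is weak. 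Moreover, the induction step for $N\ge 4$ reuses this reduction for a poset $\q$ whose indecomposables are unknown: the induction hypothesis gives only non-sincerity of $\q$, not a classification, so the isotypic conclusion ``$U=(V^i)^{\zeta}$ for a single summand type'' is not available there. Second, and more seriously, the base case $N=3$ rests on the saturation statement that \emph{every} sincere indecomposable $W$ of the relevant $N=2$ poset satisfies $\US{W}_{t}=W_0$, where $t$ is the point directly under the deleted $\zeta$. This is exactly the fact that separates $N=3$ from $N\le 2$: for $K_6$ and $A_{25}$ the analogous saturation fails (e.g.\ $_{2n}(K_6-4)$ is sincere with $\US{U}_b\not=U_0$, which is precisely why $A_{25}$ and $A_{28}$ are sincere), so this saturation is the mathematical heart of the lemma --- and you defer it to an unperformed case-by-case check. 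Even your partial argument for strong $t$ is incomplete: the construction $(\cdot)^{\zeta}$ only guarantees $\dim_{\G}W_{\zeta}/\US{W}_{\mathcal{R}}=1$, which gives $W_{\zeta}=W_0$ only after one additionally verifies $\codim_{\G}\US{W}_{\mathcal{R}}=1$ from the explicit classification. Until the chain reduction and the saturation check are actually carried out (the natural tools being Lemma~\ref{properties of Some COREPS} and Corollary 3.2 of \cite{Rod-10}, which the paper invokes for precisely this kind of configuration), what you have is a credible plan in the spirit of the paper, not a proof.
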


We recall from \cite{Rod-10}, that a subset $K_6 \subset \p $ is said to be \textbf{well inserted} in $\p$ if the subsets \begin{align*}
(a\curlyvee b) &= \{  x\in\p\, : \, a \precneqq x , \:\: \: b\precneqq x  \}, & (a\curlywedge b) &= \{  x\in\p\, : \, x\precneqq a ,\: \: x\precneqq b  \}
\end{align*} are chains, and each of the subsets $N(a)$ and $N(b)$ is a chain with a unique weak point. The corollary below follows easily from the lemmas \ref{K6 weak well ins. sets}, \ref{K6 strong well ins. sets} and the fact that each indecomposable corepresentation of  an equipped poset $\p$ containing a well inserted subset $ K_6$ and two points $p,q$ such that $p \vartriangleleft a$ and  $\{ a, b\} < q$, or,  $p < \{a, b \}$ and $a \vartriangleleft q$ is not sincere at one of the points $p,q$, see corollary 3.2 in \cite{Rod-10}.

\begin{corol}\label{One-par = Mi type}
Let $\p $ be an one-parametric equipped poset of the form \[ \p = K_6 + (a\curlyvee b) + (a \curlywedge b) + A^+ + A^- + B^+ + B^-. \] Then, $\p$ is sincere if and only if $|(a\curlyvee b)| + |(a \curlywedge b)| + | A^+ | + |A^- | + | B^+ | + |B^- | \le 2$
\end{corol}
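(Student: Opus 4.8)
The plan is to prove both implications, reducing everything to the two one-sided counting lemmas \lemref{K6 weak well ins. sets} and \lemref{K6 strong well ins. sets} together with the straddling-pair obstruction (Corollary~3.2 of \cite{Rod-10}). Write $S = |(a\curlyvee b)| + |(a \curlywedge b)| + |A^+| + |A^-| + |B^+| + |B^-|$ and split it as $S = S_w + S_s$, where $S_w = |(a\curlyvee b)| + |(a\curlywedge b)|$ counts the \emph{weak} insertions and $S_s = |A^+|+|A^-|+|B^+|+|B^-|$ counts the \emph{strong} ones. Because $K_6$ is well inserted, each of $(a\curlyvee b)$, $(a\curlywedge b)$, $N(a)$, $N(b)$ is a chain, so $\p$ is precisely $K_6$ carrying the two completely weak chains above and below both points together with the four strong arms $A^\pm$, $B^\pm$; this is exactly the common refinement of the shapes treated in the two lemmas.

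For the \emph{if} direction I would enumerate the finitely many posets with $S \le 2$. Each such $\p$ is iso- or antiisomorphic to one of $K_6, A_{25}, A_{28}, A_{29}, A_{33}, A_{34}, A_{38}, A_{39}, A_{41}, A_{42}, A_{45}, A_{46}$, whose sincere indecomposable corepresentations were constructed explicitly in Section~\ref{sectionK6} (Propositions~\ref{Classification of coreps of A25}--\ref{A42coreps}); exhibiting one such indecomposable yields sincerity, and no circularity with Theorem~B arises since those constructions are independent of it.

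For the \emph{only if} direction I argue contrapositively: assuming $S \ge 3$, I show that every $U \in \ind\p$ omits some point. The argument is a case split on how the $S$ insertions distribute. If $S_w \ge 3$, the full subposet $K_6 + (a\curlyvee b) + (a\curlywedge b)$ falls under \lemref{K6 weak well ins. sets} with $n+m \ge 3$; if $S_s \ge 3$, it falls under \lemref{K6 strong well ins. sets}. In the remaining mixed case ($S_w, S_s \le 2$ but $S_w + S_s \ge 3$, whence both are nonzero) the poset contains the well inserted $K_6$ together with a pair $p,q$ of the straddling shape ($p \lhd a$ with $\{a,b\} < q$, or $p < \{a,b\}$ with $a \lhd q$), and Corollary~3.2 of \cite{Rod-10} forbids any indecomposable from being sincere at both $p$ and $q$. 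The promotion from ``a proper subposet is not sincere'' to ``$\p$ is not sincere'' is carried out as in the proofs of the two lemmas: delete a maximal point $x$, decompose $U|_{\p\setminus x}$ into indecomposables, and observe that the induced matrix problem on the stripe $x$ is the representation problem of a garland of type~(\ref{guirnalda orden}), whose only sincere subsets are singletons, forcing $d_a = 0$ or $d_b = 0$.

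The main obstacle is the mixed case: one must check that whenever $S_w, S_s \ge 1$ with $S \ge 3$ the one-parameter hypothesis genuinely produces a straddling pair $(p,q)$ of the precise form demanded by Corollary~3.2 --- equivalently, that an all-upward or all-downward accumulation of three insertions cannot occur in a one-parameter poset without already triggering \lemref{K6 weak well ins. sets} or \lemref{K6 strong well ins. sets} after passing to $\p$ or $\p^*$. Verifying this distribution bookkeeping exhaustively, and confirming that the garland reduction behaves uniformly across the combined weak/strong shape, is where the real (though routine) work lies.
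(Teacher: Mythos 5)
Your skeleton is the paper's own: the paper disposes of this corollary in a single sentence by citing \lemref{K6 weak well ins. sets}, \lemref{K6 strong well ins. sets} and Corollary~3.2 of \cite{Rod-10}, and these are exactly your three tools. The genuine gap lies in the step you defer as ``routine bookkeeping'', whose mechanism as you describe it would fail. You claim that a mixed accumulation ($S_w, S_s \ge 1$, $S \ge 3$) always produces a straddling pair, ``equivalently'' that an all-upward accumulation of three insertions would trigger one of the two lemmas after passing to $\p$ or $\p^*$. But duality exchanges top and bottom while preserving which insertions are weak and which are strong, so a genuinely mixed configuration is never of the pure form required by either lemma, in $\p$ or in $\p^*$; and such configurations need not contain any straddling pair: take $(a\curlyvee b)=\{q\}$ and $A^+=\{z_1\lhd z_2\}$. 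What eliminates them is not the two lemmas but the one-parameter hypothesis via Theorem~A: here $q$ is forced to be incomparable with $z_1,z_2$ (if $q<z_i$ then $b<z_i$, contradicting $z_i\in A^+$; if $z_i<q$ then $a\lhd z_i\le q$ forces $a\lhd q$ by (\ref{New def. of equipped poset})), so $\{\,b\prec q;\ z_1\lhd z_2\,\}\cong K_9$ sits inside $\p$; likewise a weak chain of length two above plus one strong arm yields $K_7$, and one weak top plus strong arms over both $a$ and $b$ yields $K_8$. The missing idea is that the mixed non-straddling case is \emph{vacuous} because $\p$ would contain a second critical subposet; once the cases are split this way each cited fact applies to $\p$ directly, and the garland ``promotion'' you budget for is not needed at all.

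A second, sharper problem: your ``if'' direction asserts that every poset of the prescribed form with $S\le 2$ is iso- or antiisomorphic to one of the twelve listed posets. That is false. The poset with $(a\curlyvee b)=\{q\}$ and $B^-=\{\eta\}$ (so $\eta\lhd b$, $\{a,b\}\prec q$, hence $\eta\lhd q$, and $S=2$) is of the prescribed form, is one-parameter (one checks $K_6$ is its only critical subposet), appears in none of the appendixes even up to antiisomorphism, and --- by the very Corollary~3.2 you invoke, applied with $p=\eta\lhd b$ and $\{a,b\}<q$ --- is not sincere. So the straddling obstruction is not merely an ``$S\ge 3$'' tool: it already bites at $S=2$, the bare count $S\le 2$ cannot characterize sincerity, and the statement has to be read (as the paper implicitly does by citing Corollary~3.2 at all) with the straddling shapes excluded from the admissible configurations. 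Your plan, which quotes the enumeration as fact and reserves Corollary~3.2 for the case $S\ge 3$, cannot be completed as written.
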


\begin{remark}
If $\p$ is in the form of corollary \ref{One-par = Mi type}, the above equivalence can be rewrite as follows: $\p$ is sincere if and only if $\p$ is isomorphic or anti-isomorphic to $K_6$, $A_{25}$, $A_{28}$, $A_{29}$, $A_{33}$, $A_{34}$, $A_{38}$, $A_{39}$, $A_{41}$, $A_{42}$, $A_{45}$ or $A_{46}$
\end{remark}

\begin{theorem}\label{K6 subset}
An equipped poset $\p$ of one parameter containing a critical subset $K_6$ is sincere if and only if it is isomorphic or antiisomorphic to some of the sets $A_{25}$, $A_{28}$, $A_{29}$, $A_{33}$, $A_{34}$, $A_{38}$, $A_{39}$, $A_{41}$, $A_{42}$, $A_{45}$ and $A_{46}$.
\end{theorem}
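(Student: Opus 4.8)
The plan is to reduce an arbitrary one-parameter poset $\p$ containing a critical $K_6=\{a,b\}$ to the special shape treated in Corollary~\ref{One-par = Mi type}, and then simply to quote that corollary together with the remark following it. By Theorem~A the hypothesis ``one-parameter'' places us in the setting $w(\p)\le 4$ with $\p$ having exactly one critical subposet; since $\{a,b\}$ is already critical and $K_6\subset\p$ forces $w(\p)\ge 4$, we have $w(\p)=4$ and $K_6$ is the \emph{only} critical subposet, so $\p$ contains none of $K_1,\dots,K_5,K_7,K_8,K_9$ and no second copy of $K_6$. The entire argument consists of converting these forbidden-subposet conditions into the structural statement that $K_6$ is well inserted and that $\p$ decomposes as $\p=K_6+(a\curlyvee b)+(a\curlywedge b)+A^++A^-+B^++B^-$.

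First I would exploit the weight bound. If some point $c\neq a,b$ were incomparable to both $a$ and $b$, then $\{a,b,c\}$ would be an antichain of weight $\ge w(a)+w(b)+w(c)=5$, contradicting $w(\p)=4$. Hence every point of $\p\setminus\{a,b\}$ is comparable to $a$ or to $b$, and since $a\Inc b$ no point can sit above one of $a,b$ and below the other; this already yields the disjoint decomposition above. Next I would show that $N(a)$ and $N(b)$ are chains, each with a unique weak point. Two incomparable points $x,y\in N(a)$ give an antichain $\{a,x,y\}$; by $w(\p)=4$ both $x,y$ must be strong, but then $\{a,x,y\}$ is a copy of $K_8$ (one weak and two strong, pairwise incomparable), which is excluded, so $N(a)$ is a chain and likewise $N(b)$. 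A second weak point $x\in N(a)$ would make $\{a,x\}$ a second critical $K_6$, again excluded; thus the unique weak point of $N(a)$ is $b$, and dually for $N(b)$. Because $A^+,A^-\subset N(b)$ and $B^+,B^-\subset N(a)$ avoid $a,b$, they consist of strong points and, being subchains of $N(b)$ or $N(a)$, are chains, in agreement with Lemma~\ref{K6 strong well ins. sets}.

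The remaining and most delicate step is to show that $(a\curlyvee b)$ and $(a\curlywedge b)$ are completely weak chains. Here I would argue that two incomparable points $x,y$ lying above both $a$ and $b$ cannot both be weak (else $\{x,y\}$ is a forbidden second $K_6$), and then rule out the mixed and the strong-strong configurations by combining the weight bound with the strong-relation axiom~\eqref{New def. of equipped poset} and the absence of $K_7$ and $K_9$; the dual reasoning handles $(a\curlywedge b)$. That the chains are completely weak follows because any strong relation $a\lhd x$ with $b\prec x$, propagated by~\eqref{New def. of equipped poset}, would produce a comparability or a critical configuration incompatible with one-parameter type. This case analysis, namely verifying that no forbidden critical subposet is concealed among the points above or below both $a$ and $b$, is where I expect the real work to lie.

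Once these structural facts are established, $\p$ has precisely the form required by Corollary~\ref{One-par = Mi type}, with the weak pieces $(a\curlyvee b),(a\curlywedge b)$ and the strong pieces $A^\pm,B^\pm$ quantified as in Lemmas~\ref{K6 weak well ins. sets} and~\ref{K6 strong well ins. sets}. The corollary then gives that $\p$ is sincere if and only if $|(a\curlyvee b)|+|(a\curlywedge b)|+|A^+|+|A^-|+|B^+|+|B^-|\le 2$, and the remark following it translates this inequality into the explicit list, so that $\p$ is sincere if and only if $\p$ is isomorphic or antiisomorphic to one of $A_{25},A_{28},A_{29},A_{33},A_{34},A_{38},A_{39},A_{41},A_{42},A_{45},A_{46}$. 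The converse direction is immediate, since each of these posets visibly has the decomposition form with total piece-size $\le 2$, whence the same corollary certifies its sincerity.
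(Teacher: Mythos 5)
Your target (Corollary~\ref{One-par = Mi type} plus the remark following it) is indeed what the paper assembles its proof from, and your treatment of $N(a)$ and $N(b)$ is sound: an incomparable pair there would create a $K_8$ or violate $w(\p)\le 4$, a second weak point would create a second $K_6$, and relations between $a$ and strong points are automatically strong by (\ref{New def. of equipped poset}), giving the chains $A^{\pm},B^{\pm}$ of Lemma~\ref{K6 strong well ins. sets}. The genuine gap is your claim that the forbidden-subposet conditions force the decomposition $\p = K_6 + (a\curlyvee b)+(a\curlywedge b)+A^{+}+A^{-}+B^{+}+B^{-}$. They do not. Take $\p=\{a,b,x\}$ with $x$ strong and $a\lhd x$, $b\lhd x$ (or two such incomparable strong points $x,y$ above both). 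Then $w(\p)=4$ and the only critical subposet is $\{a,b\}$ itself: there are too few strong points for $K_1,\dots,K_5$, no weak two-chain for $K_7$ or $K_9$, and no weak point incomparable to two strong ones for $K_8$. By Theorem A this poset is one-parameter, yet $x$ lies in none of the pieces of the decomposition: $(a\curlyvee b)$ consists of points \emph{weakly} above both $a$ and $b$ (and any relation into a strong point is strong), while $A^{\pm}\subset N(b)$ and $B^{\pm}\subset N(a)$. So your assertion that a strong relation above both $a$ and $b$ ``would produce a comparability or a critical configuration incompatible with one-parameter type'' is false, and no purely combinatorial exclusion of these configurations can exist; the same applies to mixed points such as a weak $x$ with $a\lhd x$, $b\prec x$.

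These left-over configurations must instead be shown \emph{non-sincere} by a corepresentation-theoretic argument. For the example above this is easy but not combinatorial: if $U$ is indecomposable and $W=\US{U}_a+\US{U}_b\subsetneq U_x$, choose $\G$-complements $U_x=W\oplus C_1$ and $U_0=U_x\oplus C_2$; then $U=(W;U_a,U_b,W)\oplus(C_1;0,0,C_1)\oplus(C_2;0,0,0)$, so no indecomposable corepresentation is sincere at $a$, $b$ and $x$ simultaneously. In the paper this role is played by the sincerity facts imported from \cite{Rod-10} (corollary 3.2 there, on well-inserted copies of $K_6$), on which Corollary~\ref{One-par = Mi type} itself rests. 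Since your proposal explicitly defers this ``delicate step'' and the mechanism you sketch for it provably cannot work, the reduction to Corollary~\ref{One-par = Mi type} --- and with it the whole proof --- is incomplete. (A smaller point: you invoke Theorem A to conclude $w(\p)\le 4$ from one-parameter type, but Theorem A is stated only under the hypothesis $w(\p)\le 4$; the implication ``one-parameter $\Rightarrow w(\p)\le 4$'' needs separate justification.)
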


\begin{lemma}
An equipped poset $\p$ of one parameter of the form \[ \p = K_8 + \Omega^+ + \Omega^- + \Gamma^- + \Gamma^+ + \Delta^- + \Delta^+ \] is sincere if and only if it is isomorphic or antiisomorphic to one of the sets $K_8$, $A_{26}$, $A_{27}$, $A_{30}$, $A_{31}$, $A_{32}$, $A_{35}$, $A_{36}$, $A_{37}$, $A_{40}$, $A_{43}$, $A_{44}$, $A_{47}$ or $A_{48}$.
\end{lemma}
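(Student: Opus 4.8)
The plan is to follow the scheme of Theorem~\ref{K6 subset} for the critical subset $K_6$, replacing Lemmas~\ref{K6 weak well ins. sets} and~\ref{K6 strong well ins. sets} by the analogous bounds for the well-inserted chains attached to the three points $\varrho$, $\sigma$, $a$ of $K_8$, and using the structural lemmas of the present section in place of Lemma~\ref{properties of Some COREPS}.

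The \emph{if} direction is immediate: each of the fourteen posets has already been shown to admit a sincere indecomposable corepresentation, namely $K_8$ in Theorem~\ref{Theorem C3} and $A_{26},A_{27},A_{30},\dots,A_{48}$ in Propositions~\ref{A26A27coreps}, \ref{A40coreps}, \ref{A30'scoreps} and~\ref{A43'scoreps}; sincerity of the corresponding dual posets then follows from Lemma~\ref{dualsincere}.

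For the \emph{only if} direction, write the chains as $\Gamma^{\pm}$, $\Delta^{\pm}$ (chains of strong points attached below/above $\varrho$, $\sigma$) and $\Omega^{\pm}$ (the completely weak chains attached to the weak point $a$). First I would prove, by induction on the total chain length, the bound $|\Gamma^-|+|\Gamma^+|+|\Delta^-|+|\Delta^+|+|\Omega^-|+|\Omega^+|\le 2$, exactly in the spirit of Corollary~\ref{One-par = Mi type}. The inductive step is the same matrix/garland reduction used in Lemma~\ref{K6 weak well ins. sets}: for a putative sincere indecomposable $U$, choose an extremal point $x$ of a longest chain, decompose the restriction $V=U|_{\p\setminus x}=\bigoplus_i V^i$ into indecomposables, and analyse the vertical stripe of $x$ in the matrix realization $M_U$. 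The structural lemmas of this section—stating that every indecomposable corepresentation of $K_8,A_{26},A_{27},A_{30},A_{31},A_{32}$ (respectively of $A_{40}$) satisfies the hull conditions (a)--(b) on $\US{U}_a$, $U_{\varrho}$, $U_{\sigma}$—force every summand $V^i$ into one of the recorded corepresentation types, so that the problem on the stripe of $x$ becomes the representation problem of a garland of the form~(\ref{guirnalda orden}), whose indecomposables are given by the blocks~(\ref{classification of reps of a garland}). As in the $A_{42}$ computation of Figure~\ref{guirnalda}, the sincere subsets of that garland are singletons; hence once the bound is saturated the presence of the extra point $x$ splits $U$, contradicting indecomposability.

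With the bound established, a finite case analysis completes the argument: distributing at most two extra points among the six slots $\Gamma^{\pm},\Delta^{\pm},\Omega^{\pm}$ and retaining only the configurations that yield one-parameter posets reproduces precisely the fourteen posets of the statement, up to isomorphism and antiisomorphism via Lemma~\ref{dualsincere}. I expect the main obstacle to be the inductive step itself. Unlike the $K_6$ case, here three distinct points $\varrho,\sigma,a$ carry attached chains, so the hull conditions (a)--(b) must be invoked separately for the strong points $\varrho,\sigma$ and for the weak point $a$; the delicate bookkeeping is to verify that no two horizontally-extremal summands of $V$ can be simultaneously sincere at $x$, so that the garland $\mathcal{G}$ arising on the stripe of $x$ genuinely has only singleton sincere subsets. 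This is where the combinatorics is heavier than in the $K_6$ argument, and it is the crux on which the whole enumeration rests.
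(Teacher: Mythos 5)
You should first note that the paper contains no proof of this lemma at all: it is stated bare, between Proposition~\ref{A43'scoreps} and Theorem~\ref{K8 subset}, so the only thing your attempt can be measured against is the $K_6$ machinery (Lemmas~\ref{K6 weak well ins. sets}, \ref{K6 strong well ins. sets}, Corollary~\ref{One-par = Mi type}, and the garland reduction of Proposition~\ref{A42coreps}) that the author evidently intends to be transplanted. Your overall scheme --- the ``if'' direction from the classification propositions together with Lemma~\ref{dualsincere}, a bound of $2$ on the total length of the attached chains proved by restriction-plus-garland induction, then a finite enumeration --- is exactly that template, and the ``if'' half of your argument is sound.

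There are, however, two genuine gaps in the ``only if'' half. The first is your identification of the six chains, and it breaks the concluding enumeration. You attach separate chains of strong points below/above $\varrho$ and below/above $\sigma$, and completely weak chains to $a$. Neither kind of slot can occur in a one-parameter poset containing $K_8$: a strong point $x$ comparable with $\varrho$ alone leaves $\{x,\sigma,a\}$ as a second copy of the critical poset $K_8$, and a weak point comparable with $a$ alone (or with no point of $K_8$) leaves $\{x,\varrho,\sigma\}$ (respectively $\{x,a\}\cong K_6$) as a second critical subposet; either way Theorem A (exactly one critical subposet) is violated. The slots that actually occur in the fourteen listed posets are: chains of strong points above/below $a$ (giving $A_{26}$, $A_{30}$, $A_{36}$), chains of strong points above/below \emph{both} $\varrho$ and $\sigma$ simultaneously (giving $A_{27}$, $A_{32}$, $A_{37}$), and weak points above/below $a$ that are in addition comparable with one of the strong points (giving $A_{40}$, $A_{43}$, $A_{44}$, $A_{48}$), the remaining posets mixing these. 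Under your reading, $A_{26}=\{\varrho;\:\sigma;\: a\lhd\eta\}$ and $A_{27}=\{\varrho\lhd\zeta\rhd\sigma;\: a\}$ do not even have the claimed form $K_8+\Omega^{\pm}+\Gamma^{\pm}+\Delta^{\pm}$, so distributing at most two points among your slots cannot reproduce the list. The second gap is the one you flag yourself: the claim that the stripe of the removed point yields a garland problem of the form (\ref{guirnalda orden}) with only singleton sincere subsets is asserted by analogy but never established. In the $K_6$ case this step rests on Lemma~\ref{properties of Some COREPS}, which pins down the very few summand types that can appear in the restriction; here one needs the analogous control over which of the $11$ types of $K_8$, $6$ types of $A_{26}$ and $A_{27}$, and $15$ types of $A_{40}^{*}$ can occur as summands, i.e.\ an analogue of the $r$-vector bookkeeping of Tables~\ref{K7 coreps r no 0} and \ref{K9 coreps r no 0} adapted to $K_8$. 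Without that, the crux of the induction --- and hence the whole enumeration --- is missing.
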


\begin{theorem}\label{K8 subset}
Let $\p$ be a one parametric equipped poset containing a critical subset $K_8$. $\p$ is sincere if and only if it is isomorphic or antiisomorphic to some of the sets $A_{26}$, $A_{27}$, $A_{30}$, $A_{31}$, $A_{32}$, $A_{35}$, $A_{36}$, $A_{37}$, $A_{40}$, $A_{43}$, $A_{44}$, $A_{47}$ and $A_{48}$.
\end{theorem}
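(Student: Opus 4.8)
The proof runs parallel to that of Theorem~\ref{K6 subset}, with the critical dyad $K_6$ replaced by the critical triad $K_8=\{\varrho;\:\sigma;\:a\}$ and Corollary~\ref{One-par = Mi type} replaced by the Lemma immediately preceding this theorem. For the implication from right to left, if $\p$ is isomorphic or antiisomorphic to one of the listed posets, then the explicit indecomposable corepresentations constructed in Propositions~\ref{A26A27coreps}, \ref{A40coreps}, \ref{A30'scoreps} and \ref{A43'scoreps} are sincere, so $\p$ is sincere.

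For the converse, suppose $\p$ is one-parameter, contains a critical subset $K_8=\{\varrho;\:\sigma;\:a\}$, and is sincere; the case $\p=K_8$ being the critical poset of Theorem~\ref{Theorem C3}, we may assume the containment is proper. By Theorem~\ref{criterion} a one-parameter poset has exactly one critical subposet, so this copy of $K_8$ is the unique critical subset of $\p$: in particular $\p$ contains no Kleiner poset $K_1,\dots,K_5$, no $K_6$, $K_7$ or $K_9$, and no second $K_8$. The plan is to use this single-critical-subposet property to force $\p$ into the special form handled by the preceding Lemma, and then to read off the classification.

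Concretely, I would show that $K_8$ is \textbf{well inserted} in $\p$ in the sense of \cite{Rod-10}, by classifying each point $x\in\p\setminus K_8$ according to its comparabilities with $\varrho$, $\sigma$ and $a$ and discarding every configuration that, together with suitable points of $K_8$, manufactures a second critical subposet from the list $K_1,\dots,K_9$. For instance, a weak point incomparable to $a$ already produces a new $K_6=\{a,x\}$, and a weak point incomparable to both $\varrho$ and $\sigma$ produces a new critical triad of weight $4$; each remaining forbidden pattern is excluded in the same manner. What survives is precisely a chain of strong points rising above and a chain sinking below each of $\varrho$, $\sigma$ and $a$, that is, the decomposition
\[
\p = K_8 + \Omega^+ + \Omega^- + \Gamma^- + \Gamma^+ + \Delta^- + \Delta^+ .
\]
Once $\p$ has this form, the preceding Lemma gives that $\p$ is sincere if and only if it is isomorphic or antiisomorphic to one of $K_8, A_{26}, A_{27}, A_{30}, A_{31}, A_{32}, A_{35}, A_{36}, A_{37}, A_{40}, A_{43}, A_{44}, A_{47}, A_{48}$; discarding $K_8$ by the properness of the containment yields exactly the list of the theorem.

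The main obstacle is the reduction to the special form, that is, verifying that the single-critical-subposet hypothesis already forces well-insertedness. In contrast with the $K_6$ situation, $K_8$ carries two incomparable strong points $\varrho,\sigma$ besides the weak point $a$, so an extra point may attach above or below $\varrho$, above or below $\sigma$, above or below $a$, or be incomparable to several of them, and each such possibility must be tested against the full list $K_1,\dots,K_9$. This is the triad analogue of Lemmas~\ref{K6 weak well ins. sets} and~\ref{K6 strong well ins. sets}; the case analysis is finite but lengthy, and is where essentially all the work resides.
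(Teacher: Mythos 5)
Your skeleton---right-to-left via the explicit indecomposables of Propositions~\ref{A26A27coreps}, \ref{A40coreps}, \ref{A30'scoreps} and \ref{A43'scoreps}, left-to-right by forcing $\p$ into the special form of the unnumbered Lemma preceding the theorem and then quoting that Lemma---is indeed the route the paper intends (the paper prints nothing beyond that Lemma for this theorem). The genuine gap is that the announced outcome of your case analysis is false, and is contradicted by the theorem's own list. You claim that after discarding configurations that create a second critical subposet, ``what survives is precisely a chain of strong points rising above and a chain sinking below each of $\varrho$, $\sigma$ and $a$.'' But $A_{40}=\{\varrho\rhd q\lhd a,\ \sigma\}$ (as written in the paper's proof of Theorem C3) contains the weak point $q\notin K_8$, strongly comparable to \emph{two} members of the triad; each of $A_{43}$, $A_{44}$, $A_{48}$ contains two weak points outside $K_8$; and $A_{27}=\{\varrho\lhd\zeta\rhd\sigma,\ a\}$ and $A_{47}$ contain a strong point above \emph{both} $\varrho$ and $\sigma$ rather than above a single point. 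So the correct well-inserted form for $K_8$ must admit weak points (comparable to $a$ and to one of $\varrho,\sigma$) and points comparable to two members of the triad, exactly as the paper's notion of well-insertedness for $K_6$ allows $N(a)$, $N(b)$ to be chains containing one weak point and allows the components $(a\curlyvee b)$, $(a\curlywedge b)$ (compare Lemma~\ref{K6 weak well ins. sets} and Corollary~\ref{One-par = Mi type}). Note that your own exclusion rules do \emph{not} kill such points (the $q$ of $A_{40}$ is comparable to $a$ and to $\varrho$, so it passes both of your tests); the strong-chains conclusion simply does not follow from them, and if it did, it would ``prove'' that $A_{27}$, $A_{40}$, $A_{43}$, $A_{44}$, $A_{47}$, $A_{48}$ are not sincere, contradicting the direction you established in your first paragraph. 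Since you yourself locate ``essentially all the work'' in this case analysis, the proof as proposed fails at its central step: the analysis must be redone so that its output is the actual decomposition $\p = K_8 + \Omega^{+} + \Omega^{-} + \Gamma^{-} + \Gamma^{+} + \Delta^{-} + \Delta^{+}$ of the Lemma, with weak points and doubly-attached points permitted.

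A secondary logical defect: you invoke Theorem A to conclude that the given copy of $K_8$ is the unique critical subposet of $\p$. But the paper derives Theorem A \emph{from} Theorem~\ref{K8 subset} (see the opening of Section~\ref{proofs}), so citing the full criterion here is circular. What you are entitled to use is only the implication ``two distinct critical subposets imply $\p$ is not one-parameter,'' which is independent of the sincere classification and is the fact the paper itself uses tacitly, e.g.\ in the proof of Lemma~\ref{K7 the one}, where $K_8\subset\p$ is declared a contradiction. You should isolate that implication explicitly; otherwise the logical order of Section~\ref{proofs} collapses.
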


\appendix

\section{The list of finite-type 2-equipped posets and their classification of indecomposable corepresentations}\label{finitetypeposets}

Indecomposable corepresentations of a one-parameter 2-equipped poset $\p$ (except series) are obtained in a recursive way, starting from indecomposable corepresentations of $\p$ whose support is a finite-type poset $\q \subset \p$ via integration algorithm (opposite to the differentiation $\seven$, obtained in \cite{Rod-Zav-07}), roughly speaking most of discrete classification of one-parameter 2-equipped posets comes from classification of corepresentations of finite-type 2-equipped posets.

In order that results presented in this paper will be clear, a list of finite-type 2-equipped posets together with their classification of indecomposable corepresentations are presented. The list can be obtained by following results in \cite{Kle-Sim-90} about subspace categories of a vector space categories, see \cite{Naz-Roi-72}, \cite{Ri84}, \cite{Sim92}. While a complete description of their indecomposable matrix corepresentations was obtained in \cite{Rod-Zav-07} in a closed terminology used here.\\

\noindent

\textbf{The list of finite-type 2-equipped posets}

\noindent
\scalebox{.85}{
\begin{picture}(495,310)
\put(2,183){\small{$f$=$1$}}%
\put(2,59){\small{$f$=$2$}}%
\put(495,0){\line(0,1){310}}%
\put(415,0){\line(0,1){310}}%
\put(370,0){\line(0,1){310}}%
\put(295,0){\line(0,1){310}}%
\put(135,0){\line(0,1){310}}%
\put(70,0){\line(0,1){310}}%
\put(25,0){\line(0,1){310}}%
\put(0,0){\line(0,1){310}}%
\put(0,310){\line(1,0){495}}%
\put(0,240){\line(1,0){495}}%
\put(0,130){\line(1,0){495}}%
\put(0,0){\line(1,0){495}}%

\put(28,298){{\small $F_{13} $}}%
\put(72,298){{\small $F_{14} $}}%
\put(138,298){{\small $F_{15} $}}%
\put(298,298){{\small $F_{16} $}}%
\put(373,298) {{\small $F_{17} $}}%
\put(418,298) {{\small $F_{18}$}}%

\put(40.35,262){$\otimes $}
\put(50,263){$a$}%

\put(79.35,262){$\otimes$}
\put(111,262){$\circ$}%
\put(89,263){$a$}%
\put(119,263){$\zeta$}%

\multiput(195.35,262)(0,20){2}{$\otimes$}
\put(200,268.8){\line(0,1){12.3}}%
\put(227,262){$\circ $}%
\put(205,263){$a$}%
\put(205,283){$ b$}%
\put(235,263){$\zeta$}%

\put(310.35, 262){$\otimes$}
\multiput(342,262)(0,20){2}{$\circ $}%
\put(345,267.3){\line(0,1){15.8}}%
\put(320,263){$a$}%
\put(350,263){$\zeta$}%
\put(350,283){$\eta$}%

\multiput(385.35,262)(0,20){2}{$\otimes$}
\put(395, 263){$ a$}%
\put(395, 283){$ b$}%
\put(390,268.8){\line(0,1){12.3}}%

\multiput(440.35,252)(0,20){3}{$\otimes$}
\multiput(445,258.8)(0,20){2}{\line(0,1){12.3}}%
\qbezier(444,259)(424,275)(444,291)%
\put(472,252){$\circ$}%
\put(450,253){$ a$}%
\put(450,273){$ c$}%
\put(450,293){$ b$}%
\put(480,253){$\zeta$}%

%
%
\put(35,210){%
\small{%
\begin{tabular}{|c|}\h%
1 \\ \h%
\end{tabular}}}%
\put(38,195){{\small $A$}}%
\put(30,100){%
\small{%
\begin{tabular}{|cc|}\h%
1 & $\xi$ \\ \h%
\end{tabular}}}%
\put(38,85){{\small $B$}}%
\put(85,210){%
\small{%
\begin{tabular}{|c|c|}\h%
1 & 1\\ \h%
\end{tabular}}}%
\put(98,195){{\small $A$}}%
\put(75,100){%
\small{%
\begin{tabular}{|cc|c|}\h%
1 & $\xi$ & 1\\ \h%
\end{tabular}}}
\put(98,85){{\small $B$}}%
\put(75,40){%
\small{%
\begin{tabular}{|cc|c|}\h%
1 & 0 & 1\\%
1 & $\xi$ & 0 \\ \h%
\end{tabular}}}%
\put(98,18){{\small $C$}}%

\put(149,215){%
\small{%
\begin{tabular}{|c|c|c|}\h%
1 & 0 & $\xi$ \\%
0 & 1 & 1 \\ \h%
\end{tabular}}}%
\put(169,193){{\small $A$}}%
\put(140,160){%
\small{%
\begin{tabular}{|cc|c|c|}\h%
1 & 0 & $\xi$ & $\xi$ \\%
0 & 1 & 0 & 1 \\ \h%
\end{tabular}}}%
\put(169,138){{\small $C$}}%
\put(226,215){%
\small{%
\begin{tabular}{|c|c|c|}\h%
1  & $\xi$ & 1 \\ \h%
\end{tabular}}}%
\put(247,193){{\small $B$}}%
\put(220,160){%
\small{%
\begin{tabular}{|c|cc|c|}\h%
0 & 1 & $\xi$ & 1 \\%
1 & 0 & 0 & 1 \\ \h%
\end{tabular}}}%
\put(250,138){{\small $D$}}%

\put(171,105){%
\small{%
\begin{tabular}{|cc|cc|c|}\h%
1 & 0 & $\xi$ & 0 & $\xi$ \\%
0 & 1 & 0 & $\xi$ & 1 \\ \h%
\end{tabular}}}%
\put(270,105){{\small $E$}}%
\put(171,67){%
\small{%
\begin{tabular}{|cc|cc|c|}\h%
0 & 0 & 1 & $\xi$ & 1 \\%
1 & 0 & 0 & 0 & $\xi$ \\%
0 & 1 & 0 & 0 & 1 \\ \h%
\end{tabular}}}%
\put(270,65){{\small $F$}}%
\put(163,22){%
\small{%
\begin{tabular}{|cc|cc|cc|}\h%
0 & 0 & 1 & $\xi$ & 1 & 0 \\%
1 & 0 & 0 & 0 & 0 & $\xi$ \\%
0 & 1 & 0 & 0 & 1 & 1 \\ \h%
\end{tabular}}}%
\put(275,20){{\small $G$}}%

\put(298,67){%
\small{%
\begin{tabular}{|c|c|c|}\h%
1 \ 0 & $\xi$ & 1 \\%
0 \ 1 & 1 & 0 \\ \h%
\end{tabular}}}%

\put(375,210){%
\small{%
\begin{tabular}{|c|c|}\h%
1 & $\xi$ \\ \h%
\end{tabular}}}%

\put(420,210){%
\small{%
\begin{tabular}{|c|c|c|c|}\h%
1 & 0 & 0 & 1 \\%
0 & 1 & $\xi$ & $\xi$ \\ \h%
\end{tabular}}}%
\end{picture}}

We recall that $\F \subset \F(\xi) = \G$ is an arbitrary quadratic field extension, with generator $\xi $. The classification is separated in two parts according to the value of the Tits form of the vector dimension of the indecomposable corepresentation, $f(d) = 1$ or $2$.

One can refer to a specific matrix corepresentation $M$ of a poset $\p$ listed above by mentioning its respective letter in the form ($\p$-\textit{letter}), provided that it has more than one indecomposable corepresentation, as is the case of $F_{13}$, $F_{14}$ and $F_{15}$, or just writting ($\p$) when is not confusion about its unique indecomposable corepresentation, as is the case of $F_{16}$, $F_{17}$ and $F_{18}$.

The following lemma may be checked inspecting the classification given above.

\begin{lemmaA1}
An indecomposable corepresentation of a finite 2-equipped poset $\p$ which coincides with one of the sets $F_{15}$, $F_{17}$ or $F_{18}$ satisfies the following conditions
\[ \begin{cases} \US{U}_a = U_0\, \text{ and }\, U_b = U_0,  & \ \ \text{if }\, \p = F_{17} \\ \US{U}_b = \US{U}_a + U_{\xi} = U_0, & \ \ \text{if }\, \p = F_{15} \text{ or } F_{18}  \\  \US{U}_a + U_b = U_0, & \ \ \text{if }\, \p = F_{18} \end{cases} \]
\end{lemmaA1}

\section{Sincere one parametric 2-equipped posets containing $K_6$, but critical and special sets}\label{sincereK6posets}

\begin{picture}(430,160)


\put(0,0){\line(1,0){430}}
\put(0,80){\line(1,0){430}}
\put(0,160){\line(1,0){430}}

\put(0,0){\line(0,1){160}}
\put(90,0){\line(0,1){80}}
\put(100,80){\line(0,1){80}}
\put(170,0){\line(0,1){80}}
\put(250,0){\line(0,1){80}}
\put(210,80){\line(0,1){80}}
\put(340,0){\line(0,1){80}}
\put(320,80){\line(0,1){80}}
\put(430,0){\line(0,1){160}}
\put(1,150){$A_{25}$}
\multiput(25.35,102)(30,0){2}{$\otimes$}
\put(57,122){$\circ$}
\put(60,108.8){\line(0,1){14.3}}

\put(101,150){$A_{28}$}
\multiput(130.35,102)(30,0){2}{$\otimes$}
\multiput(132,122)(30,0){2}{$\circ$}
\multiput(135,108.8)(30,0){2}{\line(0,1){14.3}}
\put(140,123){{\footnotesize $1$}}
\put(140,103){{\footnotesize $2k$}}
\put(170,123){{\footnotesize $1$}}
\put(170,103){{\footnotesize $2k$}}
\put(101,83){{\footnotesize $d_0 = 2k+1$}}
\put(180,83){{\footnotesize $f = 2$}}

\put(211,150){$A_{29}$}
\multiput(230.35,102)(30,0){2}{$\otimes$}
\multiput(262,122)(0,20){2}{$\circ$}
\put(265,126.9){\line(0,1){16.2}}
\put(265,108.8){\line(0,1){14.3}}
\put(240,103){{\footnotesize $2k$}}
\put(270,143){{\footnotesize $1$}}
\put(270,123){{\footnotesize $1$}}
\put(270,103){{\footnotesize $2k-2$}}
\put(211,83){{\footnotesize $d_0 = 2k$}}
\put(290,83){{\footnotesize $f = 2$}}

\put(321,150){$A_{33}$}
\multiput(350.35,112)(30,0){2}{$\otimes$}
\multiput(352,132)(30,-40){2}{$\circ$}
\put(355,118.8){\line(0,1){14.3}}
\put(385,111.2){\line(0,-1){14.3}}
\put(360,133){{\footnotesize $1$}}
\put(360,113){{\footnotesize $2k$}}
\put(390,113){{\footnotesize $2k$}}
\put(390,93){{\footnotesize $1$}}
\put(321,83){{\footnotesize $d_0 = k+1$}}
\put(400,83){{\footnotesize $f = 2$}}

\put(1,70){$A_{34}$}
\multiput(15.35,42)(30,0){2}{$\otimes$}
\multiput(47,62)(0,-40){2}{$\circ$}
\put(50,48.8){\line(0,1){14.3}}
\put(50,41.2){\line(0,-1){14.3}}
\put(25,43){{\footnotesize $2k$}}
\put(55,63){{\footnotesize $1$}}
\put(55,43){{\footnotesize $2k-2$}}
\put(55,23){{\footnotesize $1$}}
\put(1,3){{\footnotesize $d_0 = 2k$}}
\put(60,3){{\footnotesize $f = 2$}}

\put(91,70){$A_{38}$}
\multiput(110.35,22)(30,0){2}{$\otimes$}
\put(125.35,42){$\otimes$}
\put(117.4,28.2){\line(3,4){10.2}}
\put(142.6,28.2){\line(-3,4){10.2}}

\put(171,70){$A_{39}$}
\multiput(180.35,22)(30,0){2}{$\otimes$}
\put(195.35,42){$\otimes$}
\put(227,42){$\circ$}
\put(187.4,28.2){\line(3,4){10.2}}
\put(212.6,28.2){\line(-3,4){10.2}}
\put(217.4,28.2){\line(3,4){11.2}}

\put(251,70){$A_{41}$}
\multiput(270.35,42)(30,0){2}{$\otimes$}
\multiput(285.35,62)(0,-40){2}{$\otimes$}
\put(277.4,48.2){\line(3,4){10.2}}
\put(302.6,48.2){\line(-3,4){10.2}}
\put(277.4,41.8){\line(3,-4){10.2}}
\put(302.6,41.8){\line(-3,-4){10.2}}
\put(295,63){{\footnotesize $1$}}
\put(280,43){{\footnotesize $k$}}
\put(310,43){{\footnotesize $k$}}
\put(295,23){{\footnotesize $1$}}
\put(251,3){{\footnotesize $d_0 = k+1$}}
\put(310,3){{\footnotesize $f = 1$}}

\put(341,70){$A_{42}$}
\multiput(360.35,22)(30,0){2}{$\otimes$}
\multiput(375.35,42)(0,20){2}{$\otimes$}
\put(380,48.8){\line(0,1){12.5}}
\put(367.4,28.2){\line(3,4){10.2}}
\put(392.6,28.2){\line(-3,4){10.2}}
\put(385,63){{\footnotesize $1$}}
\put(385,43){{\footnotesize $1$}}
\put(370,23){{\footnotesize $k$}}
\put(400,23){{\footnotesize $k$}}
\put(341,3){{\footnotesize $d_0 = k+1$}}
\put(405,3){{\footnotesize $f = 1$}}

\end{picture}

\section{Sincere one parametric 2-equipped posets containing $K_8$, but critical and special sets}\label{sincereK8posets}
\begin{picture}(430,270)


\put(0,0){\line(1,0){430}}
\put(0,90){\line(1,0){430}}
\put(0,180){\line(1,0){430}}
\put(0,270){\line(1,0){430}}

\put(120,180){\line(0,1){90}}
\put(250,180){\line(0,1){90}}

\put(0,0){\line(0,1){270}}
\put(110,0){\line(0,1){90}}
\put(100,90){\line(0,1){90}}
\put(210,0){\line(0,1){180}}
\put(330,0){\line(0,1){90}}
\put(320,90){\line(0,1){90}}
\put(430,0){\line(0,1){270}}

\put(1,260){$A_{26}$}
\multiput(22,212)(30,0){2}{$\circ$}
\put(82,232){$\circ$}
\put(80.35,212){$\otimes$}
\put(85,218.8){\line(0,1){14.3}}

\put(131,260){$A_{27}$}
\multiput(152,212)(30,0){2}{$\circ$}
\put(167,232){$\circ$}
\put(210.35,212){$\otimes$}
\put(156.41,216.88){\line(3,4){12.3}}
\put(183.59,216.88){\line(-3,4){12.3}}

\put(251,260){$A_{30}$}
\multiput(299.5,202)(30,0){2}{$\circ$}
\multiput(362,222)(0,20){2}{$\circ$}
\put(360.35,202){$\otimes$}
\put(365,226.9){\line(0,1){16.2}}
\put(365,208.8){\line(0,1){14.3}}
\put(370,243){{\footnotesize $1$}}
\put(370,223){{\footnotesize $1$}}
\put(370,203){{\footnotesize $2k-2$}}
\put(337.5,203){{\footnotesize $k$}}
\put(307.5,203){{\footnotesize $k$}}
\put(251,183){{\footnotesize $d_0 = 2k$}}
\put(395,183){{\footnotesize $f = 2$}}

\put(1,170){$A_{31}$}
\put(24.5,142){$\circ$}
\multiput(9.5,122)(30,0){2}{$\circ$}
\put(72,142){$\circ$}
\put(70.35,122){$\otimes$}
\put(13.91,126.88){\line(3,4){12.3}}
\put(41.09,126.88){\line(-3,4){12.3}}
\put(75,128.8){\line(0,1){14.3}}
\put(80,143){{\footnotesize $1$}}
\put(80,123){{\footnotesize $2k$}}
\put(32.5,143){{\footnotesize $1$}}
\put(47.5,123){{\footnotesize $k$}}
\put(17.5,123){{\footnotesize $k$}}
\put(1,93){{\footnotesize $d_0 = 2k+1$}}
\put(70,93){{\footnotesize $f = 2$}}

\put(101,170){$A_{32}$}
\multiput(124.5,132)(0,20){2}{$\circ$}
\multiput(109.5,112)(30,0){2}{$\circ$}
\put(170.35,112){$\otimes$}
\put(113.91,116.88){\line(3,4){12.3}}
\put(141.09,116.88){\line(-3,4){12.3}}
\put(127.5,136.9){\line(0,1){16.2}}
\put(180,113){{\footnotesize $2k+2$}}
\put(132.5,153){{\footnotesize $1$}}
\put(132.5,133){{\footnotesize $1$}}
\put(147.5,113){{\footnotesize $k$}}
\put(117.5,113){{\footnotesize $k$}}
\put(101,93){{\footnotesize $d_0 = 2k+2$}}
\put(175,93){{\footnotesize $f = 2$}}

\put(221,170){$A_{35}$}
\multiput(229.5,132)(30,0){2}{$\circ$}
\put(244.5,112){$\circ$}
\put(292,152){$\circ$}
\put(290.35,132){$\otimes$}
\put(233.91,133.12){\line(3,-4){12.3}}
\put(261.08,133.12){\line(-3,-4){12.3}}
\put(295,138.8){\line(0,1){14.3}}
\put(300,153){{\footnotesize $1$}}
\put(300,133){{\footnotesize $2k$}}
\put(267.5,133){{\footnotesize $k$}}
\put(237.5,133){{\footnotesize $k$}}
\put(252.5,113){{\footnotesize $1$}}
\put(221,93){{\footnotesize $d_0 = 2k+1$}}
\put(290,93){{\footnotesize $f = 2$}}

\put(321,170){$A_{36}$}
\multiput(332,132)(30,0){2}{$\circ$}
\put(392,152){$\circ$}
\put(390.35,132){$\otimes$}
\put(392,112){$\circ$}
\put(395,138.8){\line(0,1){14.3}}
\put(395,131.2){\line(0,-1){14.3}}
\put(400,153){{\footnotesize $1$}}
\put(400,133){{\footnotesize $2k-2$}}
\put(400,113){{\footnotesize $1$}}
\put(367.5,133){{\footnotesize $k$}}
\put(337.5,133){{\footnotesize $k$}}
\put(321,93){{\footnotesize $d_0 = 2k$}}
\put(400,93){{\footnotesize $f = 2$}}

\put(1,80){$A_{37}$}
\multiput(24.5,62)(0,-40){2}{$\circ$}
\multiput(9.5,42)(30,0){2}{$\circ$}
\put(60.35,42){$\otimes$}
\put(13.91,46.88){\line(3,4){12.3}}
\put(41.09,46.88){\line(-3,4){12.3}}
\put(13.91,43.12){\line(3,-4){12.3}}
\put(41.09,43.12){\line(-3,-4){12.3}}
\put(70,43){{\footnotesize $2k+2$}}
\put(32.5,63){{\footnotesize $1$}}
\put(47.5,43){{\footnotesize $k$}}
\put(17.5,43){{\footnotesize $k$}}
\put(32.5,23){{\footnotesize $1$}}
\put(1,3){{\footnotesize $d_0 = 2k+2$}}
\put(80,3){{\footnotesize $f = 2$}}

\put(111,80){$A_{40}$}
\put(185.35,32){$\otimes$}
\multiput(127,32)(30,0){2}{$\circ$}
\put(170.35,52){$\otimes$}
\put(161.41,36.88){\line(3,4){11.2}}
\put(187.6,38.2){\line(-3,4){10.2}}

\put(211,80){$A_{43}$}
\put(290.35,42){$\otimes$}
\multiput(222,42)(40,0){2}{$\circ$}
\multiput(275.35,62)(0,-40){2}{$\otimes$}
\put(266.41,46.88){\line(3,4){11.2}}
\put(292.6,48.2){\line(-3,4){10.2}}
\put(266.41,43.12){\line(3,-4){11.2}}
\put(292.6,41.8){\line(-3,-4){10.2}}
\put(285,63){{\footnotesize $1$}}
\put(300,43){{\footnotesize $2k+1$}}
\put(270,43){{\footnotesize $k$}}
\put(285,23){{\footnotesize $1$}}
\put(230,43){{\footnotesize $k+1$}}
\put(211,3){{\footnotesize $d_0 = 2k+2$}}
\put(300,3){{\footnotesize $f = 1$}}

\put(331,80){$A_{44}$}
\multiput(342,42)(60,0){2}{$\circ$}
\multiput(355.35,22)(15,20){3}{$\otimes$}
\multiput(362.4,28.2)(15,20){2}{\line(3,4){10.2}}
\put(392.6,61.8){\line(3,-4){11.2}}
\put(357.4,28.2){\line(-3,4){11.2}}
\put(395,63){{\footnotesize $1$}}
\put(410,43){{\footnotesize $k$}}
\put(380,43){{\footnotesize $2k$}}
\put(350,43){{\footnotesize $k$}}
\put(365,23){{\footnotesize $1$}}
\put(331,3){{\footnotesize $d_0 = 2k+1$}}
\put(400,3){{\footnotesize $f = 1$}}

\end{picture}

\section{Special one parametric equipped posets with their special vectors}\label{specialposets}
\begin{picture}(430,90)
\put(0,90){\line(1,0){430}}
\put(0,0){\line(1,0){430}}

\put(0,0){\line(0,1){90}}
\put(110,0){\line(0,1){90}}
\put(205,0){\line(0,1){90}}
\put(320,0){\line(0,1){90}}
\put(430,0){\line(0,1){90}}
\put(1,80){$A_{45}$}
\multiput(20.35,42)(50,0){2}{$\otimes$}
\multiput(22,62)(50,-40){2}{$\circ$}
\put(25,48.8){\line(0,1){14.3}}
\put(75,41.2){\line(0,-1){14.3}}
\put(26.75,63.6){\line(5,-4){46.6}}
\put(30,63){{\footnotesize $1$}}
\put(30,43){{\footnotesize $2k$}}
\put(80,43){{\footnotesize $2k$}}
\put(80,23){{\footnotesize $1$}}
\put(1,3){{\footnotesize $d_0 = 2k+1$}}
\put(80,3){{\footnotesize $f = 4$}}

\put(111,80){$A_{46}$}
\multiput(140.35,42)(30,0){2}{$\otimes$}
\multiput(155.35,62)(0,-40){2}{$\otimes$}
\put(160,28.8){\line(0,1){32.5}}
\multiput(147.4,48.2)(15,-20){2}{\line(3,4){10.2}}
\multiput(172.6,48.2)(-15,-20){2}{\line(-3,4){10.2}}
\put(165,63){{\footnotesize $1$}}
\put(150,43){{\footnotesize $k$}}
\put(180,43){{\footnotesize $k$}}
\put(165,23){{\footnotesize $1$}}
\put(111,3){{\footnotesize $d_0 = k+1$}}
\put(175,3){{\footnotesize $f = 2$}}

\put(206,80){$A_{47}$}
\put(290.35,42){$\otimes$}
\multiput(217,42)(30,0){2}{$\circ$}
\put(292,22){$\circ$}
\put(232,62){$\circ$}
\put(295,26.9){\line(0,1){14.3}}
\put(221.41,46.88){\line(3,4){12.3}}
\put(248.59,46.88){\line(-3,4){12.3}}
\put(236.8,63.8){\line(3,-2){56.4}}
\put(300,43){{\footnotesize $2k$}}
\put(300,23){{\footnotesize $1$}}
\put(240,63){{\footnotesize $1$}}
\put(253,43){{\footnotesize $k$}}
\put(225,43){{\footnotesize $k$}}
\put(206,3){{\footnotesize $d_0 = 2k+1$}}
\put(290,3){{\footnotesize $f = 4$}}

\put(321,80){$A_{48}$}
\multiput(342,42)(60,0){2}{$\circ$}
\multiput(355.35,22)(15,20){3}{$\otimes$}
\qbezier(362.4,28.2)(360,60)(387.6,61.8)
\multiput(362.4,28.2)(15,20){2}{\line(3,4){10.2}}
\put(392.6,61.8){\line(3,-4){11.2}}
\put(357.4,28.2){\line(-3,4){11.2}}
\put(395,63){{\footnotesize $1$}}
\put(410,43){{\footnotesize $k$}}
\put(380,43){{\footnotesize $2k$}}
\put(350,43){{\footnotesize $k$}}
\put(365,23){{\footnotesize $1$}}
\put(321,3){{\footnotesize $d_0 = 2k+1$}}
\put(400,3){{\footnotesize $f = 2$}}
\end{picture}

\section{Dimensions of $A_{25}, A_{26}, A_{27}, A_{38}, A_{39}, A_{40}$ and their dual sets}\label{dimensions}
\begin{center}
\begin{tabular}{|l|c|c|c|l|c|c|c|}\h
\multicolumn{4}{|c|}{\Mtres} &   \multicolumn{4}{c|}{\Mocho}  \\ \h
\multicolumn{4}{|c|}{$\dimc = (d_0; d_a, d_b, d_{\eta} )$} & \multicolumn{4}{c|}{$\dimc = (d_0; d_a, d_b, d_q )$} \\ \h
T & f & $\dimc A_{25}$ & $\dimc A_{25}^*$  & T & f & $\dimc A_{38}$ & $\dimc A_{38}^*$  \\ \h
1 & 1 & (1,1,0,1) & (1,1,0,1) & 1 & 1 & (1,0,0,1) & (1,1,1,1) \\
2 & 2 & (2,2,0,1) & (2,2,2,1) & 2 & 1 & (1,0,1,1) & (1,1,0,1) \\
3 & 2 & (1,0,0,1) & (1,2,0,1) & 3 & 1 & (1,1,1,1) & (1,0,0,1) \\
4 & 2 & (1,2,0,1) & (1,0,0,1) & 4 & 2 & (1,0,0,2) & (1,0,0,2) \\
5 & 2 & (2,2,2,1) & (2,2,0,1) &   &   &           &           \\ \h 
\end{tabular}\end{center}
\begin{center}
\begin{tabular}{|l|c|c|c|l|c|c|c|}\h
\multicolumn{4}{|c|}{\Ltres} & \multicolumn{4}{c|}{\Lcuatro}    \\ \h
\multicolumn{4}{|c|}{$\dimc = (d_0; d_{\varrho}, d_{\sigma}, d_{a}, d_{\eta})$} & \multicolumn{4}{c|}{$\dimc = (d_0; d_{\varrho}, d_{\sigma}, d_{a}, d_{\zeta})$} \\ \h
T & f & $\dimc A_{26}$ & $\dimc A_{26}^*$ & T & f & $\dimc A_{27}$ & $\dimc A_{27}^*$  \\ \h
1 & 1 & (2,1,1,1,1) & (2,1,1,1,1) & 1 & 1 & (1,0,0,1,1) & (1,0,0,1,1) \\
2 & 2 & (1,0,1,0,1) & (1,1,0,0,1) & 2 & 2 & (1,0,0,0,1) & (1,0,0,2,1) \\
3 & 2 & (2,1,1,2,1) & (2,1,1,0,1) & 3 & 2 & (2,0,1,2,1) & (2,0,1,2,1) \\
4 & 2 & (1,0,0,0,1) & (1,1,1,0,1) & 4 & 2 & (1,0,0,2,1) & (1,0,0,2,1) \\
5 & 2 & (1,1,1,0,1) & (1,0,0,0,1) & 5 & 2 & (2,1,1,2,1) & (2,0,0,2,1) \\
6 & 2 & (2,1,1,0,1) & (2,1,1,2,1) & 6 & 2 & (2,0,0,2,1) & (2,1,1,2,1) \\ \h
\end{tabular}
\end{center}

\begin{center}
\begin{tabular}{|l|c|c|c||l|c|c|c|}\h
\multicolumn{8}{|c|}{\Mnueve} \\ \h
\multicolumn{8}{|c|}{$\dimc = (d_0; d_a, d_b, d_q, d_{\theta} )$} \\ \h
T  & f & $\dimc A_{39}$ & $\dimc A_{39}^*$ & T  & f & $\dimc A_{39}$ & $\dimc A_{39}^*$\\ \h
1  & 1 & (2,1,0,1,1) & (2,1,0,2,1) & 6  & 2 & (1,0,0,2,1) & (3,2,0,2,1)\\
2  & 1 & (1,1,0,1,1) & (2,1,0,1,1) & 7  & 2 & (2,2,0,2,1) & (2,0,0,2,1)\\
3  & 1 & (1,0,0,1,1) & (2,2,0,1,1) & 8  & 2 & (2,0,0,2,1) & (2,2,0,2,1)\\
4  & 1 & (2,2,0,1,1) & (1,0,0,1,1) & 9  & 2 & (3,2,0,2,2) & (1,0,0,2,1)\\
5  & 1 & (2,1,0,2,1) & (1,1,0,1,1) & 10 & 2 & (3,2,0,2,1) & (3,2,0,2,2)\\ \h
\end{tabular}\end{center}
\begin{center}
\begin{tabular}{|l|c|c|c||l|c|c|c|} \h
\multicolumn{8}{|c|}{\Lonce}    \\ \h
\multicolumn{4}{|c||}{$\dimc = (d_0; d_{\varrho}, d_{\sigma}, d_{a}, d_q)$} & \multicolumn{4}{c|}{$\dimc = (d_0; d_{\varrho}, d_{\sigma}, d_{a}, d_q)$} \\ \h
T & f & $\dimc A_{40}$ & $\dimc A_{40}^*$ & T & f & $\dimc A_{40}$ & $\dimc A_{40}^*$  \\ \h
1 & 1 & (1,1,0,0,1) & (1,0,0,1,1) & 9   & 2 & (1,0,0,0,2) & (3,2,0,2,2) \\
2 & 1 & (2,1,1,1,1) & (2,1,0,2,1) & 10  & 2 & (2,1,0,2,2) & (2,1,0,0,2) \\
3 & 1 & (2,1,1,2,1) & (2,1,0,1,1) & 11  & 1 & (2,1,0,1,2) & (2,1,1,2,1) \\
4 & 2 & (1,1,0,0,2) & (3,1,0,2,1) & 12  & 1 & (2,1,0,1,1) & (2,1,0,1,2) \\
5 & 1 & (1,1,0,1,1) & (1,1,1,0,1) & 13  & 1 & (1,0,0,0,1) & (1,1,0,1,1) \\
6 & 1 & (2,1,0,2,1) & (2,1,1,1,1) & 14  & 2 & (2,1,0,0,2) & (2,1,0,2,2) \\
7 & 1 & (1,0,0,1,1) & (1,1,0,0,1) & 15  & 2 & (3,1,0,2,2) & (1,1,0,0,2) \\ 
8 & 2 & (3,2,0,2,2) & (1,0,0,0,2) &     &   &             &              \\ \h 
\end{tabular}
\end{center}

\section{Dimension vectors of $K_7$ and $K_9$}\label{dimensions of K7 and K9}

\begin{picture}(150,70)
\multiput(0.35,22)(0,30){3}{$\otimes$}
\multiput(5,28.8)(0,30){2}{\line(0,1){22.3}}
\put(32,22){$\circ$}
\put(10,83){{\footnotesize $q$}}
\put(10,53){{\footnotesize $p$}}
\put(10,23){{\footnotesize  $a$}}
\put(40,23){{\footnotesize  $\theta$}}
\put(15,0){$K_7$}
\put(100,23){$d = (d_0, d_a, d_p, d_q, d_{\theta})$ y tamaño de paso $\mu = (2,1,1,1,1)$.}
\end{picture}

\scalebox{0.85}{
\renewcommand{\arraystretch}{0.92}
\renewcommand{\tabcolsep}{1.5mm}
\setlength{\doublerulesep}{0.14pt}
\setlength{\arrayrulewidth}{0.2pt}
\begin{tabular}{|l|l|p{0.3cm}|l|l|p{0.6cm}|l|l|p{0.3cm}|l|l|}
\multicolumn{1}{l}{f = 1} &\mcf & \mcf & \multicolumn{1}{l}{f = 1}& \mcf & \mcf &\multicolumn{1}{l}{f = 2} & \mcf & \mcf &\multicolumn{1}{l}{f = 2} & \mcf \\ \cline{1-2}\cline{4-5}\cline{7-8}\cline{10-11}
1   & (2,1,1,2,1)&    &$1^* $& (2,0,2,1,1)&    &13 & (3,2,2,2,1)&    &$13^* $& (3,0,2,2,2)\\
2   & (1,1,1,0,0)&    &$2^* $& (1,0,0,1,1)&    &14 & (1,0,2,0,0)&    &$14^* $& (1,0,0,2,1)\\
3   & (2,0,1,2,1)&    &$3^* $& (2,1,2,1,1)&    &15 & (1,0,0,2,0)&    &$15^* $& (1,0,2,0,1)\\
4   & (2,1,0,2,1)&    &$4^* $& (2,1,2,0,1)&    &16 & (2,0,2,0,1)&    &$16^* $& (1,2,0,0,0)\\
5   & (1,0,1,1,0)&    &$5^* $& (1,0,1,1,1)&    &17 & (1,0,0,0,1)&    &$17^* $& (2,2,0,2,1)\\
6   & (1,1,0,1,0)&    &$6^* $& (1,0,1,0,1)&    &18 & (3,2,2,0,1)&    &$18^* $& (3,2,0,2,2)\\
7   & (2,1,1,0,1)&    &$7^* $& (2,2,0,1,1)&    &19 & (3,2,0,2,1)&    &$19^* $& (3,2,2,0,2)\\
8   & (2,0,1,1,1)&    &$8^* $& (2,2,1,1,1)&    &20 & (4,2,2,2,1)&    &$20^* $& (4,2,2,2,3)\\
9   & (1,1,0,0,0)&    &$9^* $& (1,1,0,0,1)&    &21 & (2,0,0,2,1)&    &$21^* $& (2,2,2,0,1)\\
10  & (2,1,0,1,1)&    &$10^*$& (2,2,1,0,1)&    &22 & (1,0,0,0,0)&    &$22^*$ & (1,2,0,0,1)\\
11  & (1,0,1,0,0)&    &$11^*$& (1,1,0,1,1)&    &23 & (3,0,2,2,1)&    &$23^*$ & (3,2,2,2,2)\\ \cline{10-11}
12  & (1,0,0,1,0)&    &$12^*$& (1,1,1,0,1)&    &24 & (2,0,2,2,1)&\mcf&  \mcf &  \mcf      \\ \cline{1-2}\cline{4-5}
\mcf&   \mcf     &\mcf& \mcf &  \mcf      &    &25 & (2,2,0,0,1)&\mcf&  \mcf &   \mcf     \\ \cline{7-8}
\end{tabular}}

\begin{picture}(50,60)
\multiput(0.35,22)(0,30){2}{$\otimes$}
\put(5,28.8){\line(0,1){22.3}}
\multiput(32,22)(0,30){2}{$\circ$}
\put(35,27){\line(0,1){25.9}}
\put(10,53){{\footnotesize $p$}}
\put(10,23){{\footnotesize  $a$}}
\put(40,53){{\footnotesize  $\eta$}}
\put(40,23){{\footnotesize  $\zeta$}}
\put(15,0){$K_9$}
\put(100,23){$d = (d_0, d_a, d_p, d_{\zeta}, d_{\eta})$ y tamaño de paso $\mu = (3,2,2,1,1)$.}
\end{picture}

\scalebox{0.85}{
\renewcommand{\arraystretch}{0.92}
\renewcommand{\tabcolsep}{1.5mm}
\setlength{\doublerulesep}{0.14pt}
\setlength{\arrayrulewidth}{0.2pt}
\begin{tabular}{|l|l|p{0.3cm}|l|l|p{0.6cm}|l|l|p{0.3cm}|l|l|}
\multicolumn{1}{l}{f = 1} &\mcf & \mcf & \multicolumn{1}{l}{f = 1}& \mcf & \mcf &\multicolumn{1}{l}{f = 2} & \mcf & \mcf &\multicolumn{1}{l}{f = 2} & \mcf \\ \cline{1-2}\cline{4-5}\cline{7-8}\cline{10-11}
1 & (3,2,3,1,1)& &$1^* $& (3,1,3,1,1)& &14 & (2,2,2,0,1)& &$14^* $& (2,0,2,1,1)\\
2 & (3,2,1,1,1)& &$2^* $& (3,3,1,1,1)& &15 & (1,0,0,1,0)& &$15^* $& (1,2,0,0,0)\\
3 & (2,2,1,0,1)& &$3^* $& (2,1,1,1,1)& &16 & (1,0,2,1,0)& &$16^* $& (1,0,2,0,0)\\
4 & (1,0,1,1,0)& &$4^* $& (1,1,1,0,0)& &17 & (2,2,0,0,1)& &$17^* $& (2,2,0,1,1)\\
5 & (2,2,1,1,0)& &$5^* $& (2,1,1,1,0)& &18 & (2,2,2,1,0)& &$18^* $& (2,0,2,1,0)\\
6 & (1,0,1,0,1)& &$6^* $& (1,1,1,0,1)& &19 & (1,0,0,0,1)& &$19^* $& (1,2,0,0,1)\\
7 & (1,1,0,0,0)& &$7^* $& (1,1,0,1,0)& &20 & (3,2,2,2,0)& &$20^* $& (3,2,2,1,0)\\
8 & (2,1,2,0,1)& &$8^* $& (2,1,2,1,1)& &21 & (3,2,2,0,2)& &$21^* $& (3,2,2,1,2)\\
9 & (1,0,1,0,0)& &$9^* $& (1,1,1,1,0)& &22 & (3,2,2,0,1)& &$22^* $& (3,2,2,2,1)\\
10& (3,1,2,1,1)& &$10^*$& (3,3,2,1,1)& &23 & (1,0,0,0,0)& &$23^*$ & (1,2,0,1,0)\\
11& (2,1,1,0,1)& &$11^*$& (2,2,1,1,1)& &24 & (2,0,2,0,1)& &$24^*$ & (2,2,2,1,1)\\ \cline{4-5}\cline{10-11}
12& (2,1,2,1,0)&\mcf & \mcf &    \mcf    & &25 & (2,2,0,1,0)&\mcf &  \mcf &    \mcf    \\
13& (1,1,0,0,1)&\mcf & \mcf &    \mcf    & &26 & (1,0,2,0,1)&\mcf &  \mcf &    \mcf    \\ \cline{1-2}\cline{7-8}
\end{tabular}}

\bigskip

\end{document}